\title{A Formula for the Pluricomplex Green Function of the Bidisk}
\author{Jesse J. Hulse}
\date{}
\numberwithin{equation}{section}
\DeclareMathOperator*{\C}{\mathbb C}
\DeclareMathOperator*{\D}{\mathbb D}
\DeclareMathOperator*{\R}{\mathbb R}
\newtheorem{theorem}{Theorem}[section]
\newtheorem{proposition}[theorem]{Proposition}
\newtheorem{corollary}[theorem]{Corollary}
\newtheorem{lemma}[theorem]{Lemma}
\theoremstyle{definition}
\newtheorem{definition}[theorem]{Definition}
\theoremstyle{remark}
\begin{document}

\maketitle


\begin{center}
Department of Mathematics \\
Syracuse University \\
Syracuse, NY 13244-1150, USA
\end{center}

\vskip 0.5truein
\begin{center}
{\bf Subject Areas}
\end{center}
\begin{center}
  several complex variables, pluripotential theory, analysis
\end{center}

\begin{center}
{\bf Keywords}
\end{center}
\begin{center}
 complex analysis, pluricomplex Green function, Lempert function
\end{center}

\begin{center}
{\bf Abstract}
\end{center}
In this paper, we derive a formula for the pluricomplex Green function of the bidisk with two poles of equal weights. In 2017, Kosi\'nski, Thomas, and Zwonek proved the Lempert function and the pluricomplex Green function are equal on the bidisk, and their description of Lempert function was pivotal in computing the formula for the pluricomplex Green function.  We divide the bidisk into two open regions, where the formula is found explicitly on the first region, and the other region is the union of a family of hypersurfaces. On each hypersurface, the formula is explicit up to a unimodular constant that is the root of a sixth degree polynomial. This derived formula for the bidisk leads to an explicit formula for the Carath\'eodory metric on the symmetrized bidisk up to a fourth degree polynomial. In 2004, Agler and Young found a formula for Carath\'eodory metric for the symmetrized bidisk that involves a supremum over the unimodular constants. The formula derived in this paper matches Agler and Young's formula, but the unimodular constant is determined by a 4th degree polynomial instead of the before mentioned supremum.
\noindent

\vspace{1cm}
\noindent

\vfill\eject

\section{Introduction}
Let $\Omega\subset \C^n$ be a bounded domain and let $P=\{p_1,\dots,p_m\}\subset \Omega$ be a finite set of poles with corresponding weights $v_j:=v(p_j)$ where $v:P\rightarrow (0,\infty)$. A plurisubharmonic function $f$ has a \textbf{logarithmic pole} of weight $v>0$ at a point $a$ if $f(z)-v\log|z-a|\leq O(1)$ as $z\rightarrow a$.
\begin{definition}    
The \textbf{pluricomplex Green function with poles $P$ and weights $v$} is defined as
\begin{align*}
&g_{\Omega}(z,P,v)\\&=\sup\{ f(z): f\in PSH(\Omega,[-\infty,0)), f \text{ has logarithmic poles at each } p_j\in P \text{ of weights }v_j\}.\end{align*}
\end{definition}
\noindent This definition for the pluricomplex Green function was first formulated by Klimek \cite{MK91}, and was extended to finitely many poles by Lelong \cite{PL89}. One may show that $g_{\Omega}$ will be plurisubharmonic, negative on $\Omega$, maximal on $\Omega\setminus \{p_1,\dots,p_m\}$, and will have logarithmic poles at each $p_j\in P$. Lempert showed that when $\Omega$ is a bounded strongly convex domain with 
real analytic (or smooth) boundary, then $g(\cdot,p)$ (one pole) is real analytic \cite{LL81}. A domain is strongly convex if it has a defining function that has a positive-definite Hessian on each real tangent space $T_p(\partial \Omega)$ at each $p\in \partial \Omega$.

Demailly show that if $\Omega$ is hyperconvex, then $g_{\Omega}(\cdot,P,v):\overline{\Omega}\rightarrow [-\infty,0]$ is continuous when one defines $g_{\Omega}(z,P,v)=0$ on $z\in\partial \Omega$, and that \[(dd^c g_{\Omega}(z,P,v))^n=\sum_{j=1}^m v_j^n \delta_{p_j}(z),\]
where $\delta_{p_j}$ is the Dirac measure at $p_j$ and $(dd^c)^n$ is the complex Monge-Amp\`ere operator \cite{JD87}. When $n=1$, the complex Monge-Amp\`ere operator is a scalar multiple of the Laplacian, and when $n\geq 2$, the complex Monge-Amp\`ere operator is nonlinear. Let $\D$ denote the unit disk in $\C$ and let $\D^2$ denote the bidisk in $\C^2$. In this paper, we present a formula for the pluricomplex Green function with 2 poles of equal 
weights in the bidisk. The computations are based on the description of the extremal disks for the Lempert function (defined in the following section) first formulated by Kosi\'nski, Thomas, and Zwonek \cite{KTZ17}. Let
\[m_{z}(\lambda):=\frac{z-\lambda}{1-\overline{z}\lambda},\]
 $(p_1,p_2)$ and $(q_1,q_2)$ be the two poles in $\D^2$ of weight one, and let $(z_1,z_2)$ be the evaluation point for the Green function.  The pluricomplex Green function is invariant under automorphisms $T$ of the bidisk, meaning that $g_{\D^2}((z_1,z_2),(p_1,p_2),(q_1,q_2))=g_{\D^2}(T(z_1,z_2),T(p_1,p_2), T(q_1,q_2))$. This follows from the inequality $g_{\Omega_2}(f(z_1,z_2),f(p_1,p_2), f(q_1,q_2))\leq g_{\Omega_1}((z_1,z_2),(p_1,p_2),(q_1,q_2))$
for holomorphic functions $f\in \text{Hol}(\Omega_1,\Omega_2)$.
Hence, without loss of generality, one may assume that $(z_1,z_2)=(0,0)$.  Kosi\'nski, Thomas, and Zwonek divide $\D^4$ (in Lemma 3 and its proof found in \cite{KTZ17}) into three regions (up to a permutation of coordinates):
\begin{itemize}
    \item \textbf{Region A}: $(p_1,p_2)$ and $(q_1,q_2)$ must satisfy $|p_1|\neq |p_2|$, $|q_1|\neq |q_2|$, $p_1\neq q_1$, $q_2\neq p_2$, and either $|p_2|<|p_1|$, $|q_2|<|q_1|$, and $|m_{q_2/q_1}(p_2/p_1)|\leq |m_{q_1}(p_1)|$ or $p_2=\omega p_1$ and $q_2=\omega q_1$ for some $|\omega|=1$.
    \item \textbf{Region B}:   $(p_1,p_2)$ and $(q_1,q_2)$ must satisfy $|p_1|\neq |p_2|$, $|q_1|\neq |q_2|$, $p_1\neq q_1$, $q_2\neq p_2$ and must not lie in region A.
    \item  \textbf{Region C}: One of the following must be satisfied: $|p_1|= |p_2|$, $|q_1|= |q_2|$, $p_1= q_1$,  or $q_2= p_2$. 
\end{itemize}
Up to a slight abuse of notation, let $g_{\D^2}((z_1,z_2),(p_1,p_2), (q_1,q_2))$ denote the pluricomplex green function with poles $(p_1,p_2)$ and $(q_1,q_2)$ with weights both equaling one. Given that the automorphism of the bidisk $(w_1,w_2)\mapsto (m_{q_1}(w_1), m_{p_2}(w_2))$ maps $(p_1,p_2)$ to $(m_{q_1}(p_1),0)$ and $(q_1,q_2)$ to $(0,m_{p_2}(q_2))$, we may assume the poles are at $(p,0)$ 
and $(0,q)$ with $p\geq 0$ and $q\geq 0$ by setting $p:=m_{q_1}(p_1)$ and $q:=m_{p_2}(q_2)$ and rotating each coordinate appropriately. Next, define the automorphism of the bidisk $T$ as  \begin{align}\label{Tauto}
T(w_1,w_2):=(m_{z_1}(w_1),m_{z_2}(w_2)).
\end{align}
The automorphism is an involution: $T\circ T=id$. Note that
\begin{align*}
    g_{\D^2}((z_1,z_2),(p,0),(0,q))=g_{\D^2}((0,0),(m_{z_1}(p)
,z_2),(z_1,m_{z_2}(q)).\end{align*}
To this end, we will reformulate the three regions above by restricting our poles to $(m_{z_1}(p),z_2)$ and $(z_1,m_{z_2}(q))$ for $(z_1,z_2)\in \D^2$ while still fixing the evaluation point at $(0,0)$. With $(z_1,z_2)\in \D^2$ and $(p,0)$ and $(0,q)$ fixed, we may rewrite the three regions above (up to a permutation of coordinates):
\begin{enumerate}
    \item[] \textbf{Region 1:} $|m_{z_1}(p)|\neq |z_2|$, $|z_1|\neq |m_{z_2}(q)|$, $q\neq 0,p\neq 0$ and $|z_2|<|m_{z_1}(p)|,\\ |m_{z_2}(q)|<|z_1|,$ $|m_{m_{z_2}(q)/z_1}(z_2/m_{z_1}(p))|\leq |m_{z_1}(m_{z_1}(p))|=p \text{ or } z_2=\omega m_{z_1}(p) \\ \text{ and } m_{z_2}(q)=\omega z_1  \text{ for some } |\omega|=1$.
    \item[]\textbf{Region 2:}  $|m_{z_1}(p)|\neq |z_2|$, $|z_1|\neq |m_{z_2}(q)|$, $q\neq 0,p\neq 0$, and must not lie in region 1.
    \item[]\textbf{Region 3:} $|m_{z_1}(p)|=|z_2|$, $|z_1|= |m_{z_2}(q)|$, $p=0$, or $q=0$. This is a closed set with empty interior.
\end{enumerate}
One may show that $(z_1,z_2)$ lies in region 1 if there exists a holomorphic function $f:\D\rightarrow \D$ such that $f(z_1)=z_2$, $f(p)=0$, and $f(0)=q$ or $f(z_2)=z_1$, $f(0)=p$, and $f(q)=0$. See section \ref{region1section} for additional details concerning region 1. If $(z_1,z_2)$ lies in region 2, there is not an interpolating function $f\in \text{Hol}(\D,\D)$ between the coordinates. In section \ref{region1section}, we show that if $(z_1,z_2)$ lies in region 1, then
\[g_{\D^2}((z_1,z_2),(p,0),(0,q))=\log\max\{|z_1m_{z_1}(p)|,|z_2m_{z_2}(q)|\}.\]
The main results of this paper are the formulas presented in Theorem \ref{completedFormulaThm} for region 2 and a corresponding formula for the Carath\'eodory metric of the symmetrized bidisk presented in Theorem \ref{p=qThm}. If $(z_1,z_2)$ lies in region 2, then $g_{\D^2}$ cannot be computed explicitly in all cases, but the formula can be determined up to a unimodular constant $e^{i\theta}$. In section \ref{region2(0,z)section}, a formula is derived for the special case $(z_1,z_2)=(0,z)$. For each $(0,z)$ in region 2, the before mentioned unimodular constant $e^{i\theta}$ plays an important part in determining the formula for the pluricomplex Green function. The unimodular constant $e^{i\theta}$ and $z$ are related by the following polynomial:
\[(e^{i\theta} - z)(\overline{z} e^{i\theta} - 1)(e^{2i\theta} - 1)^2p^2 - (e^{i\theta} - q)(\overline{z} e^{2i\theta} - z)^2(qe^{i\theta} - 1)=0.\]
 For the general case, region 2 is divided into a family of hypersurfaces using the correspondence between $(0,z)$ and $e^{i\theta}$. Theorem \ref{completedFormulaThm} of section \ref{region2section} states that for each $e^{i\theta}\in \mathbb T$, there is a corresponding hypersurface $S(e^{i\theta}$) that is a subset of region 2, and on which the formula for Green function is \footnotesize
\begin{equation*}
\begin{split}
\\&g_{\D^2}((z_1,z_2),(p,0),(0,q))=\\&\log\Big|\frac{-e^{i\theta} p^2z_1z_2 -e^{i\theta}pq + e^{i\theta}pz_2 +e^{i\theta}qz_1 + p^2qz_1z_2 + \omega( p^2q -  p^2z_2 -  pqz_1 + p z_1 z_2) - qz_1z_2}{\omega e^{2i\theta}( p^2 z_1  -  p) + \omega^2( e^{i\theta} p^2 - e^{i\theta} p z_1- p^2q +  pqz_1) + e^{i\theta}(e^{i\theta} q  - e^{i\theta}p q z_1+ p qz_1z_2  - qz_2)+\omega pqz_2(1- pz_1)  }\Big|,
\end{split}
\end{equation*}
\normalsize
where
\begin{align*}
\omega =\frac{-pe^{2i\theta}+p\pm\sqrt{p^2e^{4i\theta} - 4qe^{3i\theta}+ (4- 2p^2 + 4q^2)e^{2i\theta}    -4qe^{i\theta}+p^2}}{2(q-e^{i\theta})}
\end{align*}
and the sign of the square root depends on $e^{i\theta}$. A precise description of the hypersurfaces $S(e^{i\theta})$ can be found in section \ref{hypersurfacessection}. We also find an explicit formula for $g_{\D^2}$ on the three hypersurfaces associated with the unimodular constants $\pm 1, i$. Corollary \ref{formulaS1} of section \ref{sectionhypersurface} shows that the formula on $S(1)$ is 
\begin{align*}
    g_{\D^2}((z_1,z_2),(p,0),(0,q))=\log\Big| \frac{pqz_1z_2 - pz_1z_2 - qz_1z_2 - pq + pz_2 + qz_1}{pqz_1z_2 - pq - pz_1 - qz_2 + p + q}\Big|.
\end{align*}
Points $(z_1,z_2)$ lie on the hypersurface $S(1)$ if there exists a real number $r$ such that $-p<r<\min\{1,p/q\}$ 
and
\begin{align*}
r=\frac{p(pz_1z_2^2 + qz_1^2z_2 - 2pz_1z_2 - 2qz_1z_2 + pz_1 + qz_2 - z_1^2 + 2z_1z_2 - z_2^2)}{pqz_1^2z_2^2 - 2pqz_1z_2 - pz_1^2z_2 - qz_1z_2^2 + 2pz_1z_2 + 2qz_1z_2 + pq - pz_2 - qz_1}.
\end{align*}
Lastly, in Theorem \ref{p=qThm}, we find a formula for the Carath\'eodory metric for the symmetrized bidisk. The proper holomorphic map $\pi(z_1,z_2)=(z_1+z_2,z_1z_2)$ provides a way to derive the formula for the Carath\'eodory metric of the symmetrized bidisk from the formula for the pluricomplex Green function in the special case where the poles are $(p,0)$ and $(0,p)$. Agler and Young found a formula for Carath\'eodory metric of the symmetrized bidisk that involves a supremum over the unimodular constants \cite{AY04}. The formula derived in this paper matches Agler and Young' formula, but the unimodular constant is determined by a 4th degree polynomial instead of the before mentioned supremum. Further, Trybula finds a formula for the infinitesimal Carath\'eodory metric for the symmetrized bidisk which requires a root of a fourth degree polynomial \cite{MT14}, mirroring the need for the sixth degree polynomial found in Proposition \ref{fundpolyprop} and a fourth degree polynomial found in Theorem \ref{p=qThm}.

\section{Preliminaries} \label{prelimsection}
As before, let $P=\{p_1,\dots,p_m\}\subset \Omega
$ be a finite set of poles and let $v_j:=v(p_j)$ be a positive weight for $p_j$ where $v:P\rightarrow (0,\infty)$. Next, let $Q=\{p'_1,\dots,p'_k\}\subseteq P$ and let $v_Q=\{ v_1',\dots,v_k'\}$ be the set of the corresponding weights. 
\begin{definition}
Define
\[L_{\Omega}(z,Q,v_Q)\coloneqq\inf\Big\{\sum_{j=1}^k v_j'\log|\lambda_j'|: f\in \text{Hol}(\D, \Omega), f(0)=z, f(\lambda_j')=p'_j,\lambda'_j\in \D\Big\}.\]
Next, define the \textbf{Lempert function for $\Omega$ with poles $P$ and weights $v$} as:
\[l_\Omega(z,P,v)\coloneqq\min\{ L_{\Omega}(z,Q,v_Q):Q\subseteq P\}.\]
\end{definition}
\noindent We say a disk $f:\D\rightarrow \Omega$ is \textbf{extremal} for $z$ and poles $P$ if $f(0)=z$, $f(\lambda_j')=p'_j$, $p'_j\in Q$ and $l_{\Omega}(z,P,v)=\sum_{j=1}^k v_j'\log|\lambda_j'|$.
\begin{definition}   
Define the \textbf{Carath\'eodory function}  as follows:
\begin{align*}
        c_{\Omega}(z,P)\coloneqq \sup\{ \log|F(z)|: F\in \text{Hol}(\Omega, \D), F(p_j)=0, p_j\in P\}.
    \end{align*}
    \end{definition}
\noindent The Carath\'eodory function is related to the \textbf{Carath\'eodory distance for }$\Omega$, which is defined as
    \[C_{\Omega}(z_1,z_2)\coloneqq\sup\{ d(f(z_1),f(z_2)):f\in \text{Hol}(\Omega,\D)\},\]
    where $d(z_1,z_2)=|(z_1-z_2)/(1-\overline{z_1}z_2)|$ is the hyperbolic distance on $\D$. We note that the Carath\'eodory distance is usually defined with $\tanh^{-1}$, but for the purposes of this work we will omit the $\tanh^{-1}$.
    Next, define
    \[\delta_{\Omega}(z_1,z_2)=\inf\{ d(\lambda_1,\lambda_2): \exists h\in \text{Hol}(\D,\Omega) \text{ such that } f(\lambda_1)=z_1,f(\lambda_2)=z_2\}.\]
    Then the \textbf{Kobayashi distance} on $
    \Omega$, denoted by $K_{\Omega}(z_1,z_2)$, is defined to be the largest pseudodistance on $\Omega$ dominated by $\delta_\Omega$. One may show that $C_{\Omega}\leq K_{\Omega}\leq \delta_{G}$.
    In a similar manner, for general bounded domains $\Omega$ and poles having weights equal to one, one may prove that
    \begin{align}\label{cgl ineq}
    c_{\Omega}(z,P)\leq g_{\Omega}(z,P)\leq l_{\Omega}(z,P).\end{align}
    Lempert showed that $g_{\Omega}(z,p)=l_{\Omega}(z,p)$ for one pole $p$ in a bounded convex domain $\Omega$ in $\C^n$ \cite{LL81,LL83}. Coman  proved that the pluricomplex Green function and the Lempert function with 
two poles of equal weights are equal on the unit ball in $\C^n$ \cite{DC00}.  He also found the formula for the pluricomplex Green function with two poles of equal weight for the unit ball in $\C^2$ and showed that Green function was of class $C^{1,1}$ but not of class $C^{2}$.
It seems that equality of the Lempert function and Green function is a rather special situation. The first example of the two being unequal was found in 2003 by Carlehed and Weigerinck for the bidisk with two poles of unequal weights \cite{CW03}. Another example was found by Thomas and Trao for the bidisk with four poles of equal weights \cite{TT03}. 

Kosi\'nski, Thomas, and Zwonek proved the equality between the Lempert function and the Carath\'eodory function with two poles of weight one in the bidisk by solving certain Nevanlinna-Pick interpolation problems from bidisk to the unit disk as explained below \cite{KTZ17}. Kosi\'nski later showed that the equality between the Lempert and Carath\'eodory functions with two poles of weights one in $\D^n$ and showed the equality does not hold for three poles \cite{KO15}.  Kosi\'nski, Thomas, and Zwonek map the evaluation point to $(0,0)$ and let the poles $(p_1,p_2)$ and $(q_1,q_2)$ be two arbitrary points in the bidisk. They show that the extremal disks for the Lempert function with poles in region 2 are of the form
\begin{align}\label{deg2disk}
    \phi(\lambda)=(\lambda m_{\alpha}(\lambda),\omega \lambda m_{\beta}(\lambda)),
    \end{align}
where $\alpha,\beta,c\in\D$,\hspace{1mm}$\phi(c)=(p_1,p_2),\hspace{1mm} \phi(m_{\gamma}(c))=(q_1,q_2),$
and $|\omega|=1$, $\gamma=t\alpha+(1-t)\beta$, $t\in (0,1)$. They discovered the form of the functions $F\in \text{Hol}(\D^2,\D)$ such that $F\circ \phi=m_cm_d$ or equivalently the form of the functions $G:=m_{cd}\circ F$ such that
\begin{align*}
    G(\lambda m_\alpha(\lambda),\omega \lambda m_\beta(\lambda))=\lambda m_\gamma(\lambda),\hspace{5mm} \lambda \in \D.
\end{align*}

\noindent They do so by explicitly solving the following Pick interpolation problem from the bidisk to the disk:

\begin{align*}
    \begin{cases}
        (0,0)\mapsto 0,\\
        (\gamma m_\alpha(\gamma),\omega \gamma m_\beta(\gamma))\mapsto 0,\\
        (\lambda'm_\alpha(\lambda'), \omega \lambda'm_\beta(\lambda'))\mapsto \lambda'm_\gamma(\lambda').
    \end{cases}
\end{align*}  

\noindent This type of interpolation problem was studied by
Agler and McCarthy (pages 197-206) \cite{AM02}. This problem has a unique solution given by
\[G(z_1,z_2)=\frac{t z_1+(1-t)\overline{\omega}z_2+\tau \overline{\omega}z_1z_2}{1+\tau((1-t)z_1+t\overline{\omega}z_2)}\]
with $\tau=(\overline{\alpha-\beta})/(\alpha-\beta)$ and $t$ as before. Equality with the pluricomplex Green function follows from (\ref{cgl ineq}). The formula for the pluricomplex Green function is then $g_{\D^2}((0,0),(p_1,p_2),(q_1,q_2))=\log|cm_{\gamma}(c)|$ for $(p_1,p_2)$ and $(q_1,q_2)$ in the second region given that $\phi(c)=(p_1,p_2)$ and $\phi(m_\gamma(c))=(q_1,q_2)$. We find a value for the pluricomplex Green at $(z_1,z_2)$ (and fixed poles $(p,0)$ and $(0,q)$) by first finding the corresponding $c$ and $m_c(\gamma)$ for the special case that $(z_1,z_2)=(0,z)$ in terms of the data and a unimodular $e^{i\theta}$. We find a formula on all of region 2 by constructing hypersurfaces $S(e^{i\theta})$ and determining a formula on each hypersurface.

\section{Region One}\label{region1section}
The invariant form of the Schwarz lemma states that for $f\in \text{Hol}(\D,\D)$, for all $\lambda_1,\lambda_2\in\D$,
\begin{align*}
    |m(f(z_1),f(z_2))|=\Big|\frac{f(\lambda_1)-f(\lambda_2)}{1-\overline{f(\lambda_1)}f(\lambda_2)} \Big|\leq \Big| \frac{\lambda_1-\lambda_2}{1-\overline{\lambda_2}\lambda_1}\Big |  =|m(z_1,z_2)|.
\end{align*}
Further, if 
\begin{align*}
     |m(z_1,z_2)|=\Big|\frac{z_1-z_2}{1-\overline{z_1}z_2} \Big|\leq \Big| \frac{\lambda_1-\lambda_2}{1-\overline{\lambda_2}\lambda_1}\Big | =|m(\lambda_1,\lambda_2)|,
\end{align*}
then there exists a holomorphic function $f:\D\rightarrow \D$ such that $f(\lambda_1)=z_1$ and $f(\lambda_2)=z_2$. Next, consider the following 3-point holomorphic interpolation problem. Find a function $f\in \text{Hol}(\D,\D)$ such that $f(\lambda_j)=z_j$ for $1\leq j\leq 3$ where $\lambda_1,\lambda_2,\lambda_2\in \D$ and $z_1,z_2,z_3\in \D$.  The Nevanlinna-Pick Interpolation Theorem states that there will exist such a function $f$ if the matrix
\begin{align*}
    \Big[\frac{1-z_j\overline{z_k}}{1-\lambda_j\overline{\lambda_k}}\Big]_{j,k=1}^3
\end{align*}
is positive semi-definite. See \cite{GMR18} for more about the Nevanlinna-Pick Interpolation Theorem. One may also view this interpolation problem as follows. By composing with automorphisms of the unit disk, we may assume without loss of generality that $\lambda_3=z_3=0$. Let $|z_1|<| \lambda_1|$ and $|z_2|<|\lambda_2|$. If $|m(z_1/\lambda_1,z_2/\lambda_2)|\leq |m(z_1,z_2)|$, then there will exist a $g\in \text{Hol}(\D,\D)$ such that $g(\lambda_1)=z_1/\lambda_1$ and $g(\lambda_2)=z_2/\lambda_2$. If so, then the function $\lambda \mapsto \lambda g(\lambda)$ will be the desired interpolating function $f$. Thus we see that region 1 consists of the $(z_1,z_2)\in \D^2$ such that there is a holomorphic function $f\in \D$ such that $f(z_1)=z_2$, $f(p)=0$, and $f(0)=q$ or $f(z_2)=z_1$, $f(0)=p$, and $f(q)=0$.
      If $f(z_1)=z_2$, $f(p)=0$, and $f(0)=q$ (maps the first coordinate to the second), then the extremal disks for the Lempert function found in \cite{KTZ17} are
\[\phi(\lambda)=(m_{z_1}(\lambda),f(m_{z_1}(\lambda)).\]
The extremal functions for the Carath\'eodory function found in \cite{KTZ17} will be of the form
\[F(w_1,w_2)=w_1m_{p}(w_1).\]
Is is immediate that $\phi(0)=(z_1,z_2), \phi(m_{z_1}(p))=(p,0), \phi(z_1)=(0,q)$. Also $F(p,0)=F(0,q)=0$ and $F(z_1,z_2)=z_1m_p(z_1)$. Thus $g_{\D^2}((z_1,z_2),(p,0),(0,q))=\log|z_1m_p(z_1)|$. The steps above may be repeated with $f(z_2)=z_1$, $f(0)=p$, and $f(q)=0$ ($f$ maps the second coordinate to the first). In this case $g_{\D^2}((z_1,z_2),(p,0),(0,q))=\log|z_2m_q(z_2)|$.
If $(z_1,z_2)$ lies in region 1, and if $f$ maps the first coordinate to the second, then $|z_1|\geq |z_2|$ and  $|m_{z_1}(p)|\geq |m_{z_2}(q)|$ by the conditions for $(z_1,z_2)$ to be in region one and the fact that $f$ maps the first coordinate to the second. Thus in this case $\max\{|z_1m_{z_1}(p)|,|z_2m_{z_2}(q)|\}=|z_1m_{z_1}(p)|$. If $f$ maps the second coordinate to the first, then again by the conditions needed for $(z_1,z_2)$ to be in region 1, we have  $|z_2|\geq |z_1|$ and  $|m_{z_2}(q)|\geq |m_{z_1}(p)|$.  Thus in this case $\max\{|z_1m_{z_1}(p)|,|z_2m_{z_2}(q)|\}=|z_2m_{z_2}(q)|$. If $z_2 =\omega m_{z_1}(p)$ and $m_{z_2}(q)=\omega z_1$ where $|\omega|=1$, then $|z_1m_{z_1}(p)|=|z_2m_{z_2}(q)|$. Thus the formula for $(z_1,z_2)$ in region 1 is 
\[g_{\D^2}((z_1,z_2),(p,0),(0,q))=\log\max\{|z_1m_{z_1}(p)|,|z_2m_{z_2}(q)|\}\]

\section{Region 2: the Case When $z_1=0$}\label{region2(0,z)section}
Given the point $(z_1,z_2)$ and poles $(p,0)$ and $(0,q)$, we map $(z_1,z_2)$ to the origin via the automorphism given in (\ref{Tauto}) and consider the new triple: $(0,0),(m_{z_1}(p),z_2),(z_1,m_{z_2}(q))$. Trying to find $c,t,$ and $\omega$ in terms of the data alone is extremely difficult. Rather, we will construct, up to a unimodular constant $e^{i\theta}$, the extremal disks for the special case when $z_1=0$, and then use these disks to construct the hypersurfaces $S(e^{i\theta})$ in section \ref{region2section}.\par 
In this section, we construct the extremal disks, up to a unimodular constant $e^{i\theta}$, for the set of points $(0,0),(p,z_2),(0,m_{z_2}(q))$. For simplicity, set $z_2=z$. There is a one parameter family of degree 2 Blaschke products that fix 0 and map a given $c\in \D$ to a point $z\in \D$ (assuming $|z|<|c|$). Explicitly, this family is described by
\[\{\lambda\mapsto\lambda m_{z/c} (\tau m_c(\lambda)): |\tau|=1\}.\]
\subsection{General Computations}\label{generalComp}
Let $q>0$. For now, we assume that $p$ is complex, but later we will show that $p$ can taken to be real without any computational complications.  For the rest of this section, let $\alpha,\beta,c,\omega$, and $t$ be the parameters associated with the extremal disk for the triple $(0,0),(p,z)$ and $(0,m_z(q))$. \par  
In order to lie in region 2, we need  $|p|\neq |z|$, $0\neq |m_z(q)|$ (i.e. $z\neq q$), $p\neq 0$, $m_z(p)\neq z$, and one of following 3 conditions not to hold:
\begin{itemize}
    \item $0<|m_z(q)|$
    \item $|p|<|z|$
    \item $|m_{0}(\frac{p}{z})|<|m_{z}(m_{z}(q))|=|q|$.
\end{itemize}
These three inequalities can be rewritten as $q\neq z$, $|z|>|p|$, and $q>|p/z|$. Hence the conditions to be in region two are
    $|p|\neq |z|, z\neq q, p\neq 0, m_z(p)\neq z$,
and one either $|z|<|p|$ or $|z|<|p|/q$, or equivalently
\begin{align*}
    |z|<\min\{ |p|/q,|p|,1\}=\min\{|p|/q,1\}.
   \end{align*}
Recall that we are looking for $\alpha, \beta,c\in \D$, $t\in(0,1)$, and $\omega \in \mathbb T$ such that $\phi(c)=(p,z)$ and $\phi(m_{\gamma}(c))=(0,m_z(q))$ with $\gamma=t \alpha+(1-t)\beta$. If $m_\gamma(c)=0$, then $\phi(m_\gamma(c))=(0,0)$, which cannot happen. Thus $\alpha=m_\gamma(c)$.  The conditions above may be restated as
\begin{align}
    &\alpha =m_\gamma(c), \label{blaschkecon1}\\& cm_\alpha(c)=p,\label{blaschkecon2}\\& \omega c m_{\beta}(c)=z,\label{blaschkecon3}\\& \omega \alpha m_{\beta}(\alpha)=m_z(q),\label{blaschkecon4}\\
    & t\in (0,1).\label{blaschkecon5}
\end{align}

\begin{proposition}\label{systemsprop}
    Write $c=rv$ where $r>0$ and $|v|=1$.
    Conditions (\ref{blaschkecon1})-(\ref{blaschkecon5}) imply the following: 
    \begin{align}
        & t= \frac{(|p|^2 - 1)(|c|^2\overline{c}\omega p - \overline{c}|p|^2z - c|c|^2\omega + |p|^2c\omega - \overline{c}\omega p + cz)(\overline{c}-\overline{z}c\omega  )}{P},\label{systemcon1}\\& 
         \alpha=\frac{\overline{c} |c^2|p - |c|^2c + c|p|^2 - \overline{c}p}{|c|^2(|p|^2 - 1)}, \label{systemcon2}\\
        &\beta=\frac{\overline{c}|c|^2z - c|c|^2\omega + |z|^2c\omega- \overline{c}z}{\omega |c|^2(|z|^2 - 1)},\label{systemcon3}\\
        & 
        -\overline{z}\omega^2v^4 + \overline{p}\omega v^4 + \overline{z}\omega^2 pv^2 - \overline{p}zv^2 - \omega p + z=0,\label{systemcon4}
        \\&  |c|^2=r^2=\frac{p(1-\overline{p}v^2)(|p|^2\omega v^2 - \overline{z}\omega qv^2 + \overline{z}\omega pq - |p|^2z - \omega p + z)}{(|p|^2\omega v^2 - \overline{z}\omega q v^2 + \overline{z}\omega pq - |p|^2q - \omega p + q)(p-v^2 )}>0. \label{systemcon5}
    \end{align}
    where 
\begin{align*}
    &P=-|c|^4|p|^2\overline{z}\omega^2p + |c|^4|pz|^2\omega + |c|^2|p|^2\overline{z}c^2\omega^2 - |p|^4\overline{z}c^2\omega^2 + \overline{c}^2|c|^2|p|^2\omega p -|c|^2\overline{c}^2p|z|^2\omega  \\&+ \overline{c}^2|p|^2|z|^2\omega p + |c|^4\overline{z}\omega^2p - |c|^2\overline{p}|z|^2c^2\omega + |c|^2|p|^2\overline{z}\omega^2p + \overline{p}|p|^2|z|^2c^2\omega - \overline{c}^2|p|^4z - 2|c|^4|p|^2\omega \\&+ |c|^4|z|^2\omega + 2|c|^2|p|^4\omega - 3|c|^2|pz|^2\omega - |c|^2\overline{z}c^2\omega^2 + |p|^2\overline{z}c^2\omega^2 - 2\overline{c}^2|p|^2\omega p + |c|^2\overline{p}c^2\omega\\& - |c|^2\overline{z}\omega^2p - |p|^2\overline{p}c^2\omega + 2\overline{c}^2|p|^2z + c|^2|z|^2\omega + \overline{c}^2\omega p - \overline{c}^2z.
\end{align*}
\end{proposition}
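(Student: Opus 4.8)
The plan is to treat this as a chain of algebraic eliminations inside the system (\ref{blaschkecon1})--(\ref{blaschkecon5}). The single recurring device is that each of (\ref{blaschkecon1})--(\ref{blaschkecon4}) asserts that a M\"{o}bius map $m_x$, with unknown ``center'' $x\in\{\alpha,\beta,\gamma\}$, carries one datum to another, and clearing the factor $1-\overline{x}(\cdot)$ turns it into an equation that is affine in $x$ and $\overline{x}$; paired with its complex conjugate it becomes a $2\times2$ linear system for $(x,\overline{x})$ whose determinant is $1-|\cdot|^2$, nonzero under the Region~2 hypotheses (which force $|p|<1$, $|z|<1$, $q\in(0,1)$, $c\neq0$, and all relevant M\"{o}bius denominators nonzero). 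Throughout I keep $p$ complex, use $0<q<1$ and $|p|,|z|<1$, and substitute $c=rv$ with $r>0$, $|v|=1$ only at the very end.

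First I would derive (\ref{systemcon2}) and (\ref{systemcon3}). Expanding (\ref{blaschkecon2}) gives $\alpha+p\overline{\alpha}=c+p/c$; solving this with its conjugate (determinant $1-|p|^2$) and clearing $|c|^2$ from the denominators yields (\ref{systemcon2}). Expanding (\ref{blaschkecon3}) gives $\omega\beta+z\overline{\beta}=\omega c+z/c$; solving this with its conjugate (determinant $1-|z|^2$, using $|\omega|=1$) and rearranging yields (\ref{systemcon3}). Here $c\neq0$ because $p\neq0$, so these closed forms are legitimate.

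Next, equation (\ref{blaschkecon1}), namely $m_\gamma(c)=\alpha$, treated the same way gives $\gamma=\dfrac{\alpha(1-|c|^2)+c(1-|\alpha|^2)}{1-|\alpha|^2|c|^2}$. Since $c\neq0$ and $|\alpha|,|c|<1$ one checks $\gamma\neq\alpha$, hence $\alpha\neq\beta$, so the relation $\gamma=t\alpha+(1-t)\beta$ inverts to $t=\dfrac{\gamma-\beta}{\alpha-\beta}$. Substituting into this the closed forms for $\alpha$, $\beta$ and $\gamma$ and clearing denominators produces (\ref{systemcon1}); the denominator that emerges after simplification is precisely the polynomial $P$ recorded in the statement. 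That the resulting $t$ is real and lies in $(0,1)$ is exactly the standing hypothesis (\ref{blaschkecon5}), so nothing need be verified on that point.

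Two facts remain to be extracted: the ``collinearity'' content of (\ref{blaschkecon1}) (that the $\gamma$ just found really is an $\mathbb{R}$-affine combination of $\alpha$ and $\beta$), and the last interpolation condition (\ref{blaschkecon4}), $\omega\alpha m_\beta(\alpha)=m_z(q)$. Into (\ref{blaschkecon4}) I would substitute the formulas for $\alpha$ and $\beta$ and put $c=rv$ (so $\overline{c}=r/v$); after clearing denominators this is a polynomial identity that is quadratic in $r^2$, with coefficients in $v,\omega,p,q,z$. Eliminating $r^2$ between this identity, its complex conjugate, and the collinearity relation should collapse --- after discarding extraneous factors --- to the single $r$-free equation (\ref{systemcon4}); as a sanity check, the left-hand side $E$ of (\ref{systemcon4}) satisfies $\omega^2v^4\overline{E}=-E$, so it genuinely encodes just one real condition. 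Solving the same identity for $r^2$ and simplifying then gives (\ref{systemcon5}), whose right-hand side is positive because $r^2=|c|^2>0$. The real obstacle here is not conceptual but computational: the intermediate rational expressions are enormous (the displayed formula for $P$ already indicates the scale), so in practice one pushes $\overline{v}=1/v$ and $\overline{\omega}=1/\omega$ through at every stage to keep degrees bounded, and one verifies the final identities (\ref{systemcon1}), (\ref{systemcon4}), (\ref{systemcon5}) with a computer algebra system.
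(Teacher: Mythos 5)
Your proposal follows essentially the same route as the paper: solve the M\"obius conditions (\ref{blaschkecon2})--(\ref{blaschkecon3}) as linear systems to get (\ref{systemcon2})--(\ref{systemcon3}), produce a closed form for $t$ from (\ref{blaschkecon1}) and the affine relation $\gamma=t\alpha+(1-t)\beta$, use the reality of $t$ to extract the $r$-free relation (\ref{systemcon4}), solve (\ref{blaschkecon4}) for $r^2$ to obtain (\ref{systemcon5}), and delegate the heavy simplification to a computer algebra system. The only deviations are cosmetic but worth noting: the paper derives $t$ by solving (\ref{blaschkecon1}) linearly in $t$ (equation (\ref{texprab})) rather than first solving for $\gamma$ and forming $(\gamma-\beta)/(\alpha-\beta)$ --- the two intermediate expressions agree only modulo the reality constraint, so your denominator need not literally be the displayed $P$ --- and the paper obtains (\ref{systemcon4}) without any elimination of $r^2$, since imposing $t-\overline{t}=0$ on (\ref{texprab}) makes the $r$-dependence factor off as $(r-1)^2(r+1)^2$, leaving (\ref{systemcon4}) directly.
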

\begin{proof}
The equivalences (\ref{blaschkecon2}) $\Longleftrightarrow$ (\ref{systemcon2}) and (\ref{blaschkecon3}) $\Longleftrightarrow$ (\ref{systemcon3}) are computable by hand.  The algebraic manipulation capabilities of Maple was used to compute the following expressions. Solving for $t$ from (\ref{blaschkecon1}) gives
\begin{align}
    t = \frac{-\alpha\overline{\beta}c + \alpha - \beta + c}{|\alpha|^2c - \alpha\overline{\beta}c + \alpha - \beta}.\label{texprab}
\end{align}
Substituting in the expressions found for $\alpha$ and $\beta$ found in (\ref{systemcon2})
 and (\ref{systemcon3}) into (\ref{texprab}) and simplifying gives
 \begin{align*}
 t =\frac{(|p|^2 - 1)(|c|^2\overline{c}\omega p - \overline{c}|p|^2z - c|c|^2\omega + |p|^2c\omega - \overline{c}\omega p + cz)(\overline{c}-\overline{z}c\omega  )}{P},\end{align*}
 which is (\ref{systemcon1}).  Next, taking the conjugate of (\ref{texprab}) gives
 \begin{align}
 t-\overline{t}=-\frac{(|\alpha|^2 - 1)(\alpha \overline{\beta}|c|^2 - \beta\overline{\alpha}|c|^2 - \alpha\overline{ c} + \beta\overline{ c} + \overline{\alpha}c - \overline{\beta}c)}{\big||\alpha|^2c - \overline{\beta}\alpha c + \alpha - \beta\big|^2}.\label{tminusbt}
 \end{align}
 Since $t\in(0,1)$, then it must be that (\ref{tminusbt}) is equal to zero. Further, $|\alpha|\neq 1$, so we may conclude that
 \begin{align}\label{tzeroreq}
 \alpha \overline{\beta}|c|^2 - \beta\overline{\alpha}|c|^2 - \alpha\overline{ c} + \beta\overline{ c} + \overline{\alpha}c - \overline{\beta}c=0.
 \end{align}
 Writing $c=vr$ and substituting the expressions found for $\alpha$ and $\beta$ in (\ref{systemcon2}) and (\ref{systemcon3}) into equation (\ref{tzeroreq}) gives
 \begin{align*}
   &t-\overline{t}=0\Longleftrightarrow (r-1)^2(r+1)^2(-\overline{z}\omega^2v^4 + \overline{p}\omega v^4 + \overline{z}\omega^2 pv^2 - \overline{p}zv^2 - \omega p + z)=0.
\end{align*}
Since $|c|=r\neq -1$ or 1, we must have that
\[-\overline{z}\omega^2v^4 + \overline{p}\omega v^4 + \overline{z}\omega^2 pv^2 - \overline{p}zv^2 - \omega p + z=0.\]
This is (\ref{systemcon4}). To show (\ref{systemcon5}), again write $c=rv$  ($r>0, |v|=1$). Substituting in the expressions for $\alpha$ and $\beta$ given in (\ref{systemcon2}) and (\ref{systemcon3}) into (\ref{blaschkecon4}) and solving for $r^2$ gives
    \begin{align*}
        r^2=\frac{p(1-\overline{p}v^2)(|p|^2\omega v^2 - \overline{z}\omega qv^2 + \overline{z}\omega pq - |p|^2z - \omega p + z)}{(|p|^2\omega v^2 - \overline{z}\omega q v^2 + \overline{z}\omega pq - |p|^2q - \omega p + q)(p-v^2 )}>0.
    \end{align*}
    \end{proof}
 \begin{proposition}\label{fundpolyprop}
     \noindent Write $c=vr$ where $|v|=1$ and $r>0$. Conditions (\ref{blaschkecon1})-(\ref{blaschkecon5}) imply that there exists $e^{i\theta}=\omega v^2\in \mathbb{T}$ such that 
    \begin{align}\label{fundamentalpoly}
    (e^{i\theta} - z)(\overline{z} e^{i\theta} - 1)(e^{2i\theta} - 1)^2|p|^2 - (e^{i\theta} - q)(\overline{z} e^{2i\theta} - z)^2(qe^{i\theta} - 1) =0.
       \end{align}
        In particular, if $e^{i\theta}=\pm 1$, then $z$ is real, and if $e^{i\theta}\neq \pm 1$, then
        \begin{align}
        p= \frac{(e^{i\theta} q  - 1)(\overline{z}e^{2i\theta} - z)}{\omega (e^{2i\theta} - 1)(\overline{z}e^{i\theta} - 1)}\text{ or equivalently }\text{Arg}(\omega)=\text{Arg}\Bigg( \frac{(e^{i\theta} q  - 1)(\overline{z}e^{2i\theta} - z)}{p(e^{2i\theta} - 1)(\overline{z}e^{i\theta} - 1)}\Bigg).\label{pexpr}
    \end{align}
       \end{proposition}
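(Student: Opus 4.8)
Given a solution $(\alpha,\beta,c,t,\omega)$ of (\ref{blaschkecon1})--(\ref{blaschkecon5}), write $c=rv$ with $r>0$ and $|v|=1$ as in Proposition~\ref{systemsprop} and set $e^{i\theta}:=\omega v^{2}\in\mathbb T$; the plan is to establish (\ref{fundamentalpoly}) and (\ref{pexpr}) for this $e^{i\theta}$ by distilling the two polynomial consequences (\ref{systemcon4}) and (\ref{systemcon5}) into relations written purely in $e^{i\theta}$. Throughout I abbreviate $\mu:=\omega p$ (so $|\mu|=|p|$) and use repeatedly the identities $\omega v^{2}=e^{i\theta}$, $\omega^{2}v^{2}=\omega e^{i\theta}$, $v^{2}=\overline{\omega}e^{i\theta}$, $\overline{p}\,v^{2}=\overline{\mu}e^{i\theta}$, $p=\overline{\omega}\mu$, $p-v^{2}=\omega^{-1}(\mu-e^{i\theta})$, all immediate from $|\omega|=|v|=1$.

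First I would rewrite (\ref{systemcon4}). Grouping its terms as $\overline{p}\,v^{2}(\omega v^{2}-z)-\overline{z}\,\omega^{2}v^{2}(v^{2}-p)-(\omega p-z)=0$ and applying the identities above converts (\ref{systemcon4}) into the equivalent relation
\[
e^{i\theta}\overline{\mu}\,(e^{i\theta}-z)-\overline{z}\,e^{i\theta}(e^{i\theta}-\mu)-(\mu-z)=0.\qquad(\star)
\]
Here $\overline{\mu}$ occurs once and linearly, with nonzero coefficient $e^{i\theta}(e^{i\theta}-z)$ (using $|z|<1$), so $(\star)$ solves to
\[
\overline{\mu}=\frac{\mu(1-\overline{z}e^{i\theta})+\overline{z}e^{2i\theta}-z}{e^{i\theta}(e^{i\theta}-z)},
\qquad\text{whence}\qquad
1-e^{i\theta}\overline{\mu}=\frac{(e^{i\theta}-\mu)(1-\overline{z}e^{i\theta})}{e^{i\theta}-z}.
\]
(One also checks that $e^{-i\theta}$ times the left side of $(\star)$ is anti-self-conjugate, so $(\star)$ is really a single real equation --- the bookkeeping that makes the equation/unknown count come out right.)

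The parameter $q$ enters only through (\ref{systemcon5}). Substituting $\omega v^{2}=e^{i\theta}$, $\omega p=\mu$, $\overline{z}\,\omega q v^{2}=\overline{z}qe^{i\theta}$, $\overline{z}\,\omega pq=\overline{z}q\mu$, $p=\overline{\omega}\mu$, $\overline{p}\,v^{2}=\overline{\mu}e^{i\theta}$, $p-v^{2}=\omega^{-1}(\mu-e^{i\theta})$ into the right side of (\ref{systemcon5}), the powers of $\omega$ cancel and $r^{2}=|c|^{2}$ becomes a rational function of $\mu,\overline{\mu},e^{i\theta},z,\overline{z},q,|p|^{2}$. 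Since $|c|^{2}$ is a positive real number, that rational function must equal its own conjugate; this reality condition is the second real equation, and it is the one carrying the $q$-dependence. Into it I substitute the formula for $\overline{\mu}$ from $(\star)$, producing a polynomial equation in $\mu$ with coefficients in $e^{i\theta},z,\overline{z},q,|p|^{2}$; together with $|p|^{2}=\mu\overline{\mu}$ and the same formula for $\overline{\mu}$ (which makes this a second quadratic in $\mu$), I eliminate $\mu$ --- take a resultant, then discard the factors that cannot vanish on region~2, such as $\overline{z}e^{i\theta}-1\neq0$, $e^{i\theta}-z\neq0$, $\mu-e^{i\theta}\neq0$. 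The surviving factor is exactly
\[
(e^{i\theta}-z)(\overline{z}e^{i\theta}-1)(e^{2i\theta}-1)^{2}|p|^{2}-(e^{i\theta}-q)(\overline{z}e^{2i\theta}-z)^{2}(qe^{i\theta}-1)=0,
\]
i.e.\ (\ref{fundamentalpoly}). When $e^{2i\theta}\neq1$, the common root $\mu$ of the two quadratics above --- the one selected by $r^{2}>0$ --- comes out to be $\mu=\omega p=\dfrac{(qe^{i\theta}-1)(\overline{z}e^{2i\theta}-z)}{(e^{2i\theta}-1)(\overline{z}e^{i\theta}-1)}$, which is (\ref{pexpr}) after dividing by $\omega$ and comparing arguments; as an independent check one verifies directly that this $\mu$ satisfies $(\star)$ and that $\mu\overline{\mu}=|p|^{2}$ reproduces (\ref{fundamentalpoly}). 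Finally, if $e^{i\theta}=\pm1$ then $(e^{2i\theta}-1)^{2}$ annihilates the $|p|^{2}$ term and $\overline{z}e^{2i\theta}-z$ collapses to $\overline{z}-z$, so (\ref{fundamentalpoly}) becomes $(e^{i\theta}-q)(qe^{i\theta}-1)(\overline{z}-z)^{2}=0$; since $0<q<1$ forces $e^{i\theta}-q\neq0$ and $qe^{i\theta}-1\neq0$, we conclude $z=\overline{z}$, i.e.\ $z$ is real.

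The main obstacle is the elimination in the third paragraph: combining $(\star)$ with the reality condition from (\ref{systemcon5}) to produce the degree-six polynomial (\ref{fundamentalpoly}) is a long symbolic computation, realistically carried out with a computer algebra system (as is already done for Proposition~\ref{systemsprop}), and its only subtlety is correctly tracking and cancelling the spurious factors introduced when denominators are cleared, so that (\ref{fundamentalpoly}) is isolated cleanly. The passage from (\ref{systemcon4}) to $(\star)$ and the endgame for $e^{i\theta}=\pm1$ are routine.
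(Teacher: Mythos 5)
Your proposal is correct and follows essentially the same route as the paper: solve (\ref{systemcon4}) for the conjugated variable, impose the reality of $r^2$ from (\ref{systemcon5}), discard the factors that cannot vanish in region 2, solve the surviving relation (which is linear in $\omega p$) to obtain (\ref{pexpr}), and multiply by its conjugate to get (\ref{fundamentalpoly}). The only differences are cosmetic: the paper factors $r^2-\overline{r}^2=0$ explicitly and reads off the linear factor (\ref{factor1}) rather than phrasing the step as a resultant elimination in $\mu=\omega p$, and it settles the case $e^{i\theta}=\pm 1$ from that factor (which reduces to $\pm(\overline{z}-z)(q+1)=0$) rather than from (\ref{fundamentalpoly}) itself.
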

    \begin{proof}
    Proposition \ref{systemsprop} shows (\ref{blaschkecon1})-(\ref{blaschkecon5}) imply (\ref{systemcon1})-(\ref{systemcon5}).
    Solving for $\overline{p}$ in condition (\ref{systemcon4}) gives:
    \begin{align}
    \overline{p} =-\frac{(-\overline{z}\omega^2 v^4 + \overline{z}\omega^2pv^2 - \omega p + z)}{v^2(\omega v^2 - z)}.\label{bpexpr1}
    \end{align}
    Substituting this expression above for $\overline{p}$ in (\ref{systemcon5})
    and simplifying gives
        \begin{align}    
    r^2=\frac{p\omega(\overline{z}\omega^2pv^2 - \overline{z}\omega qv^2 - \omega p + z)}{(\omega^2 p v^2 - \omega q v^2 - \omega p q + qz)}.\end{align}
    Take the conjugate  of the expression for $r^2$ given in (\ref{systemcon5}) and substituting in the expression for $\overline{p}$ in (\ref{bpexpr1}). Taking the expressions for $r^2$ and $\overline{r}^2$ and simplifying the expression $r^2-\overline{r}^2=0$ gives:
    \begin{equation}
    \begin{split}
        &0=q(\omega p - z)(\overline{z}\omega^2pv^2 - \omega v^2 - \omega p + z)\\& \cdot(\overline{z}\omega^4pv^6 - \overline{z}\omega^3qv^6 - \omega^3 pv^4 + \overline{z}\omega^2v^4 - \overline{z}\omega^2pv^2 + \omega qv^2z + \omega p - z).
        \end{split}
    \end{equation}
    Because we are in region 2, then $\omega p-z\neq 0$. Solving $\overline{z}\omega^2pv^2 - \omega v^2 - \omega p + z=0$ for $p$ gives
    \[p=\frac{\omega v^2 - z}{\omega(\overline{z}\omega v^2 - 1)}.\]
    But this would imply that $p$ is unimodular. So $\overline{z}\omega^2pv^2 - \omega v^2 - \omega p + z\neq 0$. Thus
    \begin{align}\label{factor1} 
   \overline{z}\omega^4pv^6 - \overline{z}\omega^3qv^6 - \omega^3 pv^4 + \overline{z}\omega^2v^4 - \overline{z}\omega^2pv^2 + \omega qv^2z + \omega p - z=0.\end{align}
    Set $e^{i\theta}=\omega v^2$.
    Temporarily assume that $e^{i\theta}\neq \pm1$.  
    Solving for $p$ in equation (\ref{factor1}) gives
    \begin{align}\label{pexr u}
    p=\frac{(\overline{z}\omega^2v^4 - z)(\omega qv^2 - 1)}{\omega(\omega v^2 - 1)(\omega v^2 + 1)(\overline{z}\omega v^2 - 1)}.
    \end{align}
    Next, take the expression for $p$ given in (\ref{pexr u}) and substitute into the expression for $\overline{p}$ given in (\ref{bpexpr1}). Solving for $\overline p$ and simplifying gives
    \begin{align}\label{bpexpr u}\overline{p}=\frac{\omega(\omega v^2 - q)(\overline{z}\omega^2v^4 - z)}{(\omega v^2 - z)(\omega v^2 - 1)(\omega v^2 + 1))}.
    \end{align}
    One obtains (\ref{fundamentalpoly}) by multiplying equations (\ref{pexr u}) and (\ref{bpexpr u}) together and rearranging. If $e^{i\theta}=\pm 1$, then (\ref{factor1}) becomes
    \[\omega(\omega v^2 - 1)(\omega v^2 + 1)(\overline{z}\omega v^2 - 1)p - (\overline{z}\omega^2 v^4 - z)(\omega qv^2 - 1)=\pm(\overline{z} - z)(q + 1) = 0.\]
    Thus $z$ must be real. If one plugs in $e^{i\theta}=\pm 1$ and real $z$ into (\ref{fundamentalpoly}), then the equation holds.
\end{proof}
\begin{proposition}\label{omegavprop}
The unimodular parameters $\omega$ and $v$ can be written in terms of $e^{i\theta},p$ and q only:
\begin{align}
&\omega =-\frac{|p|^2e^{2i\theta} - |p|^2 \pm |p|\sqrt{|p|^2e^{4i\theta}- 4qe^{3i\theta} + (4- 2|p|^2 + 4q^2)e^{2i\theta}   - 4qe^{i\theta} + |p|^2}}{2p(q - e^{i\theta})},\label{omegaexpr}\end{align}
\begin{align}
v^2=-\frac{e^{i\theta}2p(q - e^{i\theta})}{|p|^2e^{2i\theta} - |p|^2 \pm|p|\sqrt{p^2e^{4i\theta} - 4qe^{3i\theta}+ (4- 2p^2 + 4q^2)e^{2i\theta}    -4qe^{i\theta}+p^2}},\label{vexpr}
\end{align}
where the sign of the square root match and depend on the specific choice of $e^{i\theta}$.
\end{proposition}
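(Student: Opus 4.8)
The plan is to take the two identities for $p$ and $\overline p$ established inside the proof of Proposition \ref{fundpolyprop} and eliminate the point $z$ from them, leaving a relation that involves only $\omega$ and the data $e^{i\theta},p,q$. Assume first $e^{i\theta}\neq\pm1$ and abbreviate $X:=e^{i\theta}=\omega v^2$. By (\ref{pexr u}) and (\ref{bpexpr u}), after clearing denominators,
\[
p\,\omega(X^2-1)(\overline z X-1)=(\overline z X^2-z)(qX-1),\qquad
\overline p\,(X-z)(X^2-1)=\omega(X-q)(\overline z X^2-z).
\]
In fact the second identity is exactly the complex conjugate of the first once one uses $|\omega|=|v|=1$, and it is the interplay of an identity with its conjugate that makes the elimination possible.

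First I would substitute $w:=X-z$. Since $|X|=1$ one has $\overline w=X^{-1}-\overline z$, hence $\overline z X-1=-X\overline w$ and $\overline z X^2-z=w-X^2\overline w$. Feeding these into the two identities turns each into a $\C$-linear equation in $w$ and $\overline w$; solving each for the ratio $w/\overline w$ gives
\[
\frac{w}{\overline w}=\frac{X\bigl(X(qX-1)-p\omega(X^2-1)\bigr)}{qX-1}
\qquad\text{and}\qquad
\frac{w}{\overline w}=\frac{X^2\,\omega(X-q)}{\omega(X-q)-\overline p\,(X^2-1)},
\]
all divisions being legitimate on region $2$ (there $X\neq\pm1$, $qX\neq1$, $X\neq q$, $\overline z X\neq1$). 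Setting the two right-hand sides equal removes $z$ entirely; cross-multiplying, cancelling a factor $X$, noting that the two leading products coincide and cancel, and then dividing by $X^2-1\neq0$ leaves the quadratic
\[
p(X-q)\,\omega^2-|p|^2(X^2-1)\,\omega+\overline p\,X(qX-1)=0 .
\]

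Solving this by the quadratic formula writes $\omega$ in terms of $e^{i\theta},p,q$ alone. Its discriminant is
\[
|p|^4(X^2-1)^2-4|p|^2X(X-q)(qX-1)=|p|^2\bigl(|p|^2X^4-4qX^3+(4-2|p|^2+4q^2)X^2-4qX+|p|^2\bigr),
\]
a direct expansion, and this is precisely the quantity under the root in (\ref{omegaexpr}); writing $2p(X-q)=-2p(q-e^{i\theta})$ then puts $\omega$ in the stated form (\ref{omegaexpr}). Since $\omega v^2=e^{i\theta}$, we get $v^2=e^{i\theta}/\omega$, and substituting the expression just found for $\omega$ and clearing the denominator yields (\ref{vexpr}) --- necessarily with the same branch of the root, since $v^2$ is just $e^{i\theta}/\omega$. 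The omitted case $e^{i\theta}=\pm1$ is treated directly: by Proposition \ref{fundpolyprop}, $z$ is then real and (\ref{factor1}) holds, and the formulas degenerate correctly; for instance at $e^{i\theta}=1$ the quadratic collapses to $p\,\omega^2=\overline p$, matching the limit of (\ref{omegaexpr}) and, via $v^2=1/\omega$, of (\ref{vexpr}).

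Finally, a word on the sign. The product of the two roots of the quadratic is $\overline p\,X(qX-1)/\bigl(p(X-q)\bigr)$, of modulus $1$ since $q\in\R$ and $|X|=1$; as at least one root is the genuine unimodular parameter $\omega$ of the configuration and the roots are reciprocal in modulus, both lie on $\mathbb T$. Thus each root gives a formally admissible unimodular $\omega$ and $v^2=e^{i\theta}/\omega$, and the correct one is singled out only by the leftover requirements $t\in(0,1)$ and $r^2>0$ from (\ref{systemcon1}) and (\ref{systemcon5}); which of the two it is varies with $e^{i\theta}$, which is what the statement means by the sign of the root depending on $e^{i\theta}$. The main obstacle is not any single computation but recognizing this move: once the substitution $w=e^{i\theta}-z$ exhibits both identities as statements about the single unimodular ratio $w/\overline w$, the variable $z$ falls out and only a discriminant evaluation and the branch bookkeeping remain.
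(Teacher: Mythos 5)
Your derivation is correct and lands on exactly the same quadratic as the paper, namely (\ref{uomegaeqn}), $p(e^{i\theta}-q)\omega^2-|p|^2(e^{2i\theta}-1)\omega+\overline p\,e^{i\theta}(qe^{i\theta}-1)=0$, after which the quadratic formula, the discriminant computation, and $v^2=e^{i\theta}/\omega$ give (\ref{omegaexpr}) and (\ref{vexpr}) just as in the paper. The route to that quadratic is genuinely different, though: the paper solves (\ref{systemcon4}) for $\overline z$, substitutes into the reality condition $r^2-\overline{r}^2=0$ built from (\ref{systemcon5}), and factors the result with computer algebra before setting $v^2=e^{i\theta}/\omega$; you instead start from the two expressions (\ref{pexr u}) and (\ref{bpexpr u}) for $p$ and $\overline p$ already proved in Proposition \ref{fundpolyprop}, observe they are conjugate to one another, and eliminate $z$ by hand via the substitution $w=e^{i\theta}-z$ and the two expressions for the ratio $w/\overline w$. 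What your approach buys is a transparent, hand-checkable elimination (plus the nice extra observation that the product of the two roots is unimodular, so both candidate values of $\omega$ lie on $\mathbb T$ and the branch is pinned down only by $t\in(0,1)$ and $r^2>0$, which is consistent with the paper's sign discussion); what it costs is that (\ref{pexr u}) and (\ref{bpexpr u}) are only available for $e^{i\theta}\neq\pm1$, so the case $e^{i\theta}=\pm1$ needs separate treatment, and your handling of it is the one soft spot: at $e^{i\theta}=\pm1$ equation (\ref{factor1}) degenerates to $0=0$, so "the quadratic collapses to $p\omega^2=\overline p$" is a statement about the limiting form of the quadratic rather than a proof that the actual configuration parameters satisfy it there; one should instead invoke the explicit real-$z$ analysis of section \ref{formulazreal} (where $\omega=-1$, $v\in\{1,i\}$) or note, as the paper's derivation does, that the reality-of-$r^2$ argument yields (\ref{uomegaeqn}) uniformly without excluding $e^{i\theta}=\pm1$.
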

\begin{proof}
Solving equation (\ref{systemcon4}) for $\overline{z}$
gives
\begin{align}\label{bzexpr}
  \overline{z} =\frac{ -(\overline{p}\omega v^4 - \overline{p}v^2z - \omega p + z)}{\omega^2v^2(-v^2 + p)}.
\end{align}
Substituting in this value for $\overline{z}$ into the expression $r^2-\overline{r}^2=0$ (where $r^2$ is expressed as in (\ref{systemcon5})), one arrives at the equation
\[-\frac{q(|p|^2 - 1)(\omega p - z)(\omega v^2 - z)(|p|^2\omega^2v^4 - \overline{p}\omega qv^4 - \omega^2pv^2 + \overline{p}v^2 + \omega pq - |p|^2)}{\omega (|p|^2\omega qv^2 - \overline{p}qv^2z - \omega qv^2 + \overline{p}v^2 - |p|^2 + qz)(p-v^2 )(\omega^2pv^2 - \omega qv^2 - \omega pq + qz)}=0.\]
Note that $|p|\neq 1$, $|z|\neq 1$ and $|p|\neq |z|$, so we must have that
\[|p|^2\omega^2v^4 - \overline{p}\omega qv^4 - \omega^2pv^2 + \overline{p}v^2 + \omega pq - |p|^2=0.\]
Next, set $v^2=e^{i\theta}/\omega$ and substitute into the equation above. Simplifying gives
\begin{align}\label{uomegaeqn} 
|p|^2\omega e^{2i\theta} - \overline{p}qe^{2i\theta} + \omega^2pq - \omega^2pe^{i\theta} - |p|^2\omega + \overline{p}e^{i\theta}=0.\end{align}
Solving the equation directly above for $\omega$ gives the expression stated in (\ref{omegaexpr}).
The $\pm$ in the $\omega$ and $v$ expression must match since we need $\omega v^2= e^{i\theta}$. The sign of the square root depends on the particular $e^{i\theta}$. For example, when $e^{i\theta}=i$, the positive square root is the desired root. When $e^{i\theta}=-i$, the negative square root is the desired root. See section \ref{z=pmi} for additional details.
\end{proof}
In Kosi\'{n}ski, Thomas, and Zwonek's formulation, they assumed that the Blaschke product in the first component of $\phi$ (equation (\ref{deg2disk})) has a unimodular constant of 1. One may wonder if adding an extra unimodular constant to this factor simplifies the computations. The answer is as follows. Equation (\ref{fundamentalpoly}) contains $|p|$, but no instances of $p$ otherwise. Also, recall that (\ref{pexpr}) gives
\begin{align*}
p =\frac{(e^{i\theta}q - 1)(\overline{z}e^{2i\theta} - z)}{\omega(e^{2i\theta} - 1)(\overline{z}e^{i\theta} - 1)}, \text{ in particular } |p| =\Big|\frac{(e^{i\theta}q - 1)(\overline{z}e^{2i\theta} - z)}{(e^{2i\theta} - 1)(\overline{z}e^{i\theta} - 1)}\Big|.
\end{align*}
\noindent Thus rotating $p$ does not affect $e^{i\theta}=\omega v^2$. By rotating $p$, we  change the value of $\omega$ and $v$, but the value for $e^{i\theta}$ remains the same. Using this reasoning, one may take $c$ to be real valued, and rotate $p$ to satisfy these equations. There is no benefit from doing this, so we match the convention set in the \cite{KTZ17}. For the remainder of this section, assume $p$ is real.
\begin{lemma} Let $p,q>0$.
The polynomial in (\ref{fundamentalpoly}) is a real polynomial and may be rewritten as
\begin{align}\label{realpoly1}
   p^2\sin^2(\theta)(a-1)^2-((q-\cos\theta)^2+(1-p^2)\sin^2(\theta))b^2=0
\end{align}
where $ze^{-i{\theta}}=a+ib$ and $e^{i\theta}=\cos(\theta)+i\sin(\theta)$. Alternatively, we may rewrite (\ref{fundamentalpoly}) as
\begin{equation}\label{realpoly2}
    \begin{split}
    &p^2\sin^2(\theta)(x^2+y^2+1-2x\cos(\theta)-2y\sin(\theta))\\&-(q^2-2q\cos(\theta)+1)(-x\sin(\theta)+y\cos(\theta))^2=0
    \end{split}
\end{equation}
where $e^{i\theta}=\cos(\theta)+i\sin(\theta)$ and $z=x+iy$.
Further, this polynomial will always have at least two zeros.
\end{lemma}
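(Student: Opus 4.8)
The statement has three parts: (i) that the polynomial in \eqref{fundamentalpoly} is real (i.e.\ has real coefficients as a polynomial in the real and imaginary parts of $z$, once $p,q>0$), (ii) the two explicit rewritings \eqref{realpoly1} and \eqref{realpoly2}, and (iii) that it always has at least two zeros. For (i) and (ii) the approach is purely computational: start from
\[(e^{i\theta} - z)(\overline{z} e^{i\theta} - 1)(e^{2i\theta} - 1)^2 p^2 - (e^{i\theta} - q)(\overline{z} e^{2i\theta} - z)^2(qe^{i\theta} - 1) = 0,\]
multiply through by $e^{-2i\theta}$ (or pull a factor $e^{-i\theta}$ into each linear term as appropriate) so that every factor becomes something whose modulus squared is manifestly real. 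The natural substitution is $w := z e^{-i\theta} = a + ib$; then $e^{i\theta} - z = e^{i\theta}(1 - w)$, $\overline{z}e^{i\theta} - 1 = \overline{w} - 1$ so the product $(e^{i\theta}-z)(\overline{z}e^{i\theta}-1) = e^{i\theta}(1-w)(\overline{w}-1) = -e^{i\theta}|1-w|^2$, and $(e^{2i\theta}-1)^2 = e^{2i\theta}(e^{i\theta}-e^{-i\theta})^2 = -4e^{2i\theta}\sin^2\theta$. Similarly $\overline{z}e^{2i\theta} - z = e^{i\theta}(\overline{w}e^{i\theta} - we^{-i\theta}) = e^{i\theta}\cdot(-2i\,\Imag(we^{-i\theta}))$, wait — better: $\overline{z}e^{2i\theta}-z = e^{i\theta}(\overline{w}e^{i\theta}-\overline{w}\cdot 0)\dots$; the clean way is $\overline z e^{2i\theta}-z = e^{i\theta}(\overline z e^{i\theta} - z e^{-i\theta}) = e^{i\theta}(\overline w - w) = -2i e^{i\theta}\,b$, so $(\overline{z}e^{2i\theta}-z)^2 = -4 e^{2i\theta} b^2$. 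Finally $(e^{i\theta}-q)(qe^{i\theta}-1) = e^{i\theta}(1 - qe^{-i\theta})(qe^{i\theta}-1)\cdot$\dots more simply $(e^{i\theta}-q)(qe^{i\theta}-1) = e^{i\theta}\big(q - (q^2+1)\cos\theta + \dots\big)$; expanding directly gives $qe^{2i\theta} - (q^2+1)e^{i\theta} + q = e^{i\theta}(2q\cos\theta - q^2 - 1)$. Plugging all of this in, the common factor $e^{2i\theta}$ cancels and one is left with $4\sin^2\theta\cdot p^2\,|1-w|^2 \;-\; 4b^2(q^2+1-2q\cos\theta)=0$, i.e.\ after dividing by $4$ and noting $|1-w|^2 = (1-a)^2 + b^2 = (a-1)^2 + b^2$,
\[p^2\sin^2\theta\big((a-1)^2 + b^2\big) - b^2(q^2 - 2q\cos\theta + 1) = 0,\]
and since $q^2 - 2q\cos\theta + 1 = (q-\cos\theta)^2 + \sin^2\theta$ one gets $p^2\sin^2\theta(a-1)^2 - b^2\big((q-\cos\theta)^2 + (1-p^2)\sin^2\theta\big) = 0$, which is \eqref{realpoly1}. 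For \eqref{realpoly2} I would just re-express $a,b$ in terms of $x,y$ via $a + ib = (x+iy)(\cos\theta - i\sin\theta)$, so $a = x\cos\theta + y\sin\theta$ and $b = -x\sin\theta + y\cos\theta$; then $(a-1)^2 + b^2 = |z|^2 - 2\Real(ze^{-i\theta}) + 1 = x^2 + y^2 + 1 - 2x\cos\theta - 2y\sin\theta$, and substituting gives \eqref{realpoly2} verbatim. Throughout, "real polynomial" then means: in the form \eqref{realpoly2} it is visibly a polynomial in $x,y$ with real coefficients (depending on the real parameters $p,q,\cos\theta,\sin\theta$).

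**The existence of zeros.** This is the only part that is not a bare calculation. Work from \eqref{realpoly1}: writing $w = ze^{-i\theta} = a+ib$, the equation says
\[b^2 = \frac{p^2\sin^2\theta}{(q-\cos\theta)^2 + (1-p^2)\sin^2\theta}\,(a-1)^2 =: \mu^2 (a-1)^2,\]
provided the denominator is positive; since $0<p<1$ in the relevant range (we are in region 2, where $|z| < \min\{p/q,1\}$ forces considerations, and $p$ appears as $|p|^2 < 1$ type quantities — I would note $1-p^2 \ge 0$ isn't automatic, but the denominator $(q-\cos\theta)^2 + (1-p^2)\sin^2\theta$ is a sum that is strictly positive unless $\cos\theta = q$ and $p^2\sin^2\theta \ge \sin^2\theta$, and one checks these degenerate $\theta$ separately). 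So the solution set of \eqref{fundamentalpoly}, viewed in the $w = a+ib$ plane, is the pair of lines $b = \pm\mu(a-1)$ through the point $w=1$, i.e.\ in the $z$-plane a pair of real-analytic lines through $z = e^{i\theta}$. The claim "at least two zeros" then just says this locus, intersected with the domain of interest (here one cares about $z\in\D$, or about $(0,z)$ in region 2), is nonempty — which is clear because the two lines pass through $e^{i\theta}\in\partial\D$ with finite slope, hence enter $\D$; and when $\theta = 0$ or $\pi$ (so $\sin\theta = 0$) \eqref{realpoly1} degenerates to $b^2\cdot(q\mp 1)^2 = 0$, forcing $b=0$, i.e.\ $z$ real, which is a whole line's worth of solutions — consistent with Proposition \ref{fundpolyprop}. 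So in every case the zero set is a line or pair of lines, in particular infinite, and certainly has at least two points.

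**Main obstacle.** The computations in the first paragraph are routine but error-prone: the slickest route is exactly the substitution $w = ze^{-i\theta}$, which turns every bracket into either a real multiple of $e^{i\theta}$ or a modulus, so that the phases cancel by design — I would emphasize organizing the algebra that way rather than brute-force expanding the degree-$6$ trigonometric polynomial. The genuinely non-mechanical point is the last sentence: one must be a little careful about what "zero" means (a zero of \eqref{fundamentalpoly} as an equation in $z$, for fixed $p,q,\theta$) and about the degenerate values of $\theta$ where $\sin\theta = 0$ or where the denominator $(q-\cos\theta)^2 + (1-p^2)\sin^2\theta$ could vanish; handling those cases — by falling back on Proposition \ref{fundpolyprop}, which already records that $e^{i\theta}=\pm1$ forces $z$ real, hence an entire real line of solutions — is what makes the "at least two zeros" assertion airtight. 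Once \eqref{realpoly1} is in hand the geometric picture (a pair of lines, or a single line, through $e^{i\theta}$) makes the conclusion transparent.
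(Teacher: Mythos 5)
Your derivation of \eqref{realpoly1} and \eqref{realpoly2} is correct and is essentially the paper's own computation: factoring out powers of $e^{i\theta}$ (equivalently substituting $w=ze^{-i\theta}$) so that $(e^{i\theta}-z)(\overline z e^{i\theta}-1)=-e^{i\theta}|1-w|^2$, $(e^{2i\theta}-1)^2=-4e^{2i\theta}\sin^2\theta$, $(\overline z e^{2i\theta}-z)^2=-4b^2e^{2i\theta}$ and $(e^{i\theta}-q)(qe^{i\theta}-1)=e^{i\theta}(2q\cos\theta-q^2-1)$, after which the unimodular factor (it is $e^{3i\theta}$, not $e^{2i\theta}$, a harmless slip) cancels and the identity $q^2-2q\cos\theta+1=(q-\cos\theta)^2+\sin^2\theta$ gives \eqref{realpoly1}, and $a=x\cos\theta+y\sin\theta$, $b=y\cos\theta-x\sin\theta$ gives \eqref{realpoly2}.

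The gap is in the last assertion. ``At least two zeros'' refers to zeros in the unimodular variable $e^{i\theta}$ (equivalently $\theta\in[0,2\pi)$) for \emph{fixed} $z=x+iy$, $p$, $q$: \eqref{fundamentalpoly} is the sixth-degree polynomial in $e^{i\theta}$ whose roots supply the constant $e^{i\theta}$ attached to each $(0,z)$ in region 2, and the whole point of the claim (used in Proposition \ref{(0,z)formulaprop} and in the definition of the sets $A(e^{i\theta})$) is that such a $\theta$ exists for every admissible $z$. You instead fixed $\theta$ and described the zero set in the $z$-plane as a line or pair of lines through $e^{i\theta}$; that statement is true but trivial (any such locus has infinitely many points) and it does not yield existence of a root $\theta$ for a given $z$, which is the nontrivial content. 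The paper's argument is an intermediate-value argument in $\theta$ with $z$ fixed and $y\neq 0$: the left-hand side of \eqref{realpoly2} equals $-(q-1)^2y^2<0$ at $\theta=0$ and $-(q+1)^2y^2<0$ at $\theta=\pi$, while at $\theta=\pi+\arctan(y/x)$ (for $x>0$; $\theta=\arctan(y/x)$ for $x<0$) the second term vanishes and the expression is strictly positive; continuity then forces sign changes, giving at least two zeros in $\theta$ (the symmetry coming from the fact that only $\cos\theta$ and $\sin^2\theta$ enter \eqref{realpoly1} accounts for the second one). Nothing in your proposal replaces this step, so the existence statement — the part of the lemma the rest of the paper actually relies on — remains unproved in your write-up.
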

\begin{proof}
Note that
\begin{align*}
    &(e^{i\theta} - z)(\overline{z} e^{i\theta} - 1)(e^{2i\theta} - 1)^2p^2 - (e^{i\theta} - q)(\overline{z} e^{2i\theta} - z)^2(qe^{i\theta} - 1) =0
    \\&\Longleftrightarrow -(1-ze^{-i\theta})(1-\overline{z} e^{i\theta})(e^{i\theta}-e^{-i\theta})^2p^2+(e^{i\theta}-q)(\overline{z}e^{i\theta}-ze^{-i\theta})^2(e^{-i\theta}-q)=0
    \\&\Longleftrightarrow  -|1-ze^{i\theta}|^2(2i\text{Im}(e^{i\theta}))^2p^2+|e^{i\theta}-q|^2((-2i\text{Im}(ze^{-i\theta}))^2)=0.
\end{align*}
Continuing on in this manner gives (\ref{realpoly1}) and (\ref{realpoly2}). When $\theta=0$, (\ref{realpoly2}) reduces to $-(q-1)^2y^2<0$. When $\theta=\pi$, then (\ref{realpoly2}), reduces to $-(q^2+2q+1)y^2<0$. Suppose that $x>0$, and let $\theta=\pi+\arctan(y/x)$. Then $-x\sin(\theta)+y\cos(\theta)=0$ and $x\cos(\theta)+y\sin(\theta)=-(x/|x|)\sqrt{x^2+y^2}=-\sqrt{x^2+y^2}$. Then (\ref{realpoly2}) reduces to
    \begin{align*}
        p^2\frac{(y/x)^2}{1+(y/x)^2}(x^2+y^2+1+2\sqrt{x^2+y^2})>0.
    \end{align*}
    If $x<0$, then repeat the work above with $\theta=\arctan(y/x)$ to show that the polynomial will be positive.
    Thus there will be a least one $\theta\in[0,2\pi)$ such that the polynomial in (\ref{realpoly2}) is equal to zero. Given that only $\sin^2(\theta)$ and $\cos(\theta)$ appear in (\ref{realpoly1}), then it follows that if $\theta$ is a solution, then so is $-\theta$.
\end{proof}

\begin{proposition}\label{zrealprop}
    Let $p>0$, then $z$ is real if and only if $e^{i\theta}=\pm 1$.
\end{proposition}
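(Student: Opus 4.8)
Since Proposition~\ref{fundpolyprop} already gives the implication ``$e^{i\theta}=\pm1\ \Rightarrow\ z$ real'', what remains is the converse, and I would argue it by contradiction: assume $z$ is real and that $e^{i\theta}\neq\pm1$, and deduce $r^{2}=0$, which is absurd because $c\,m_{\alpha}(c)=p>0$ forces $c\neq0$, i.e.\ $r=|c|>0$. Since $e^{i\theta}\neq\pm1$, the formula~(\ref{pexpr}) for $p$ is available. Putting $\overline z=z$ into it turns the factor $\overline z e^{2i\theta}-z$ into $z(e^{2i\theta}-1)$, which cancels $e^{2i\theta}-1$ in the denominator, so (\ref{pexpr}) collapses to $\omega p\,(ze^{i\theta}-1)=z\,(qe^{i\theta}-1)$. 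If $z=0$ this already forces $p=0$, impossible in region~$2$, so assume $z\neq0$; taking moduli, using $|\omega|=1$ and $|qe^{i\theta}-1|=|e^{i\theta}-q|$, we obtain the real relation
\[
p^{2}\bigl(z^{2}-2z\cos\theta+1\bigr)=z^{2}\bigl(q^{2}-2q\cos\theta+1\bigr),
\]
which is exactly the factor surviving after $\sin^{2}\theta$ is removed from the real form (\ref{realpoly2}) of (\ref{fundamentalpoly}). This relation alone is still compatible with $e^{i\theta}\neq\pm1$, so the additional leverage must come from another of the conditions in Proposition~\ref{systemsprop}; the one I would use is the formula (\ref{systemcon5}) for $r^{2}=|c|^{2}$.

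The heart of the argument is to substitute $z=\overline z$, $p=\overline p$, $\omega v^{2}=e^{i\theta}$, and the relation $\omega p\,(ze^{i\theta}-1)=z\,(qe^{i\theta}-1)$ into (\ref{systemcon5}). Its numerator is $p\,(1-pv^{2})$ times
\[
N:=|p|^{2}\omega v^{2}-\overline z\,\omega q v^{2}+\overline z\,\omega pq-|p|^{2}z-\omega p+z,
\]
and $1-pv^{2}\neq0$ because $|pv^{2}|=p<1$. Writing $\omega v^{2}=e^{i\theta}$ and eliminating $\omega p$ by the relation above, a short simplification gives
\[
N\,(ze^{i\theta}-1)=e^{i\theta}\Bigl(2z\cos\theta\,(p^{2}-zq)-\bigl(p^{2}(z^{2}+1)-z^{2}(q^{2}+1)\bigr)\Bigr),
\]
and the parenthesis on the right is precisely the displayed real relation rearranged, hence zero. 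Since $|z|<1$ we have $ze^{i\theta}-1\neq0$, so $N=0$. The denominator of (\ref{systemcon5}) does not vanish: $p-v^{2}\neq0$ as $|v^{2}|=1>p$, and its other factor differs from $N$ by $(z-q)(1-p^{2})\neq0$, hence is nonzero when $N=0$. Therefore $r^{2}=0$, contradicting $c\neq0$. So $e^{i\theta}=\pm1$, as claimed.

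The only genuinely technical step is the displayed identity for $N\,(ze^{i\theta}-1)$ --- an exact but slightly intricate polynomial manipulation, most safely checked by hand with care or, as elsewhere in the paper, with computer algebra; beyond that, the one thing to watch is that the cancellation yielding $r^{2}=0$ is not of the indeterminate form $0/0$, which is guaranteed by the nonvanishing of the denominator noted above.
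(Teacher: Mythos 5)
Your proof is correct, but it reaches the forward implication by a genuinely different route than the paper. The paper handles ``$z$ real $\Rightarrow e^{i\theta}=\pm1$'' by pointing to the explicit construction in section~\ref{formulazreal}: for $-p<z<\min\{1,p/q\}$ one has $\omega=-1$, $v=i$ (so $e^{i\theta}=1$), and for $-p/q<z<-p$ one has $\omega=-1$, $v=1$ (so $e^{i\theta}=-1$); this implicitly leans on the uniqueness of the extremal-disk parameters (the $2$-to-$1$ map $\Phi$ from \cite{KTZ17}, invoked later in Proposition~\ref{(0,z)formulaprop}) to conclude these are the only possible values, and it forward-references a later section. The converse is done in the paper by substituting $\theta=0,\pi$ into the realified polynomial (\ref{realpoly2}), which is the same content as the ``in particular'' clause of Proposition~\ref{fundpolyprop} that you cite. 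Your forward argument instead stays entirely inside Propositions~\ref{systemsprop} and~\ref{fundpolyprop}: assuming $z$ real and $e^{i\theta}\neq\pm1$, formula (\ref{pexpr}) collapses to $\omega p(ze^{i\theta}-1)=z(qe^{i\theta}-1)$, and feeding this into the numerator factor $N$ of (\ref{systemcon5}) gives $N(ze^{i\theta}-1)=e^{i\theta}\bigl(2z\cos\theta\,(p^{2}-zq)-[p^{2}(z^{2}+1)-z^{2}(q^{2}+1)]\bigr)$, whose bracket vanishes by the modulus relation, so $N=0$; I checked this identity and also that $N-D=(z-q)(1-p^{2})$ for the denominator factor $D$, so with $z\neq q$, $p<1$, $|v^{2}|=1>p$ the quotient is not of the form $0/0$ and $r^{2}=0$ follows, contradicting $c\neq0$. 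What your approach buys is self-containedness: no appeal to the uniqueness statement of \cite{KTZ17} and no forward reference, at the cost of a nontrivial (but verifiable, and verified) polynomial manipulation; the paper's approach is shorter given that section~\ref{formulazreal} must be carried out anyway for the explicit real-$z$ formula, but it is logically complete only once the uniqueness of $(\omega,v^{2})$ is granted.
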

\begin{proof}
First assume $z$ is real. If $-p<z<p/q$, then $\omega=-1$ and $v=i$ as is shown in section \ref{formulazreal}. If $-p/q<z<-p$, then $\omega=-1$ and $v=1$ as shown in section \ref{formulazreal}. 
Next, assume $e^{i\theta}=\pm 1$, then $\theta=0$ or $\theta=\pi$. The polynomial in (\ref{realpoly2}) then reduces to
\begin{align*}
    -(q^2\pm2q+1)(\pm y).
\end{align*}
Thus $y=0$.
\end{proof}

\begin{proposition}\label{ParametersProp} Let $e^{i\theta}=\omega v^2$. Assume (\ref{blaschkecon1})-(\ref{blaschkecon5}),  $z\in \D\setminus (-1,1)$, and $|z|<p/q$.
Then the parameters can be written as follows:
\begin{align}
    &r^2=\frac{(e^{i\theta} - q)(\overline{z} - z)(\overline{z} e^{2i\theta}  - z)(e^{i\theta} q - 1)}{(\overline{z} q^2 e^{2i\theta}  - |z|^2q e^{2i\theta}  + \overline{z} e^{2i\theta}  - qe^{2i\theta}  - 2\overline{z} qe^{i\theta}  + 2q e^{i\theta} z + q|z|^2 - q^2z + q - z)(e^{2i\theta}  - 1)}\label{rsexpr}\\
    &t=\frac{q(\overline{z} qe^{2i\theta} - e^{2i\theta} - \overline{z}e^{i\theta} + e^{i\theta}z - qz + 1)(\overline{z}e^{i\theta}- 1)(e^{i\theta} - z)}{Q}\label{texpr}\\
    &\alpha=\frac{v}{e^{i\theta}r}\frac{e^{3i\theta}\overline{z}q-e^{3i\theta}r^2+r^2ze^{2i\theta}-e^{2i\theta}\overline{z}-e^{i\theta}qz+e^{i\theta}r^2-r^2z+z}{\overline{z}qe^{2i\theta}-e^{2i\theta}-\overline{z}e^{i\theta}+ze^{i\theta}-qz+1}\label{alphaexpr}\\
    &\beta=\frac{v}{e^{i\theta} r}\frac{|z|^2e^{i\theta}-r^2e^{i\theta}+r^2z-z}{|z|^2-1}.\label{betaexpr}
\end{align}
where
\begin{align*}
    &Q=e^{4i\theta}\overline{z}^2q^2 - e^{4i\theta}\overline{z}|z|^2q - e^{3i\theta}\overline{z}^2q + e^{3i\theta}\overline{z}|z|^2 - 2e^{3i\theta}\overline{z}q^2 + 2e^{3i\theta}|z|^2q - \overline{z}e^{3i\theta}+e^{3i\theta}q\\& + 3e^{2i\theta}\overline{z} q - 3e^{2i\theta}qz - 2 |z|^2 e^{i\theta}q  - |z|^2e^{i\theta}z + 2e^{i\theta}q^2z + e^{i\theta}qz^2 + |z|^2qz - e^{i\theta}q + e^{i\theta}z - q^2z^2.
\end{align*}
In particular, note that $|c|,t,|\alpha|,|\beta|$ only depend on $z,p,q$ and $e^{i\theta}=\omega v^2$ and no other instances of $\omega$ or $v$.
\end{proposition}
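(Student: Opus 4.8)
The plan is to feed the conclusions of Propositions~\ref{systemsprop}, \ref{fundpolyprop} and~\ref{omegavprop} back into the parameter formulas and simplify. Since $p$ is now real, $\overline p=p$, and equations (\ref{pexr u}) and (\ref{bpexpr u}) (with $\overline p=p$ and $\omega v^2$ written as $e^{i\theta}$) give the closed forms
\[
p\omega=\frac{(qe^{i\theta}-1)(\overline z e^{2i\theta}-z)}{(e^{2i\theta}-1)(\overline z e^{i\theta}-1)},\qquad
\frac{p}{\omega}=\frac{(e^{i\theta}-q)(\overline z e^{2i\theta}-z)}{(e^{i\theta}-z)(e^{2i\theta}-1)},
\]
and a short computation from (\ref{uomegaeqn}) together with the first of these yields $\omega^2=\dfrac{(qe^{i\theta}-1)(z-e^{i\theta})}{(q-e^{i\theta})(\overline z e^{i\theta}-1)}$, hence $v^4=e^{2i\theta}/\omega^2$ in closed form. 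Thus every \emph{even} power of $\omega$ and $v$, and every occurrence of $p^2=(p\omega)(p/\omega)$, is a rational function of $e^{i\theta},z,\overline z,q$; the only ``free'' quantity that survives is a single factor of $v$. The hypotheses $z\in\D\setminus(-1,1)$ (equivalently $e^{i\theta}\neq\pm1$ by Proposition~\ref{zrealprop}) and $|z|<p/q$ are exactly what keeps every denominator above nonzero.

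For $r^2=|c|^2$ I would take the reduced form $r^2=\dfrac{p\omega(\overline z\,\omega^2 pv^2-\overline z\,\omega qv^2-\omega p+z)}{\omega^2 pv^2-\omega qv^2-\omega pq+qz}$ obtained in the proof of Proposition~\ref{fundpolyprop}, replace $\omega^2 pv^2=(p\omega)e^{i\theta}$ and $\omega qv^2=qe^{i\theta}$, substitute the closed form for $p\omega$, and clear the denominators $(e^{2i\theta}-1)(\overline z e^{i\theta}-1)$; simplification (in Maple) gives (\ref{rsexpr}), which is free of $p,\omega,v$. For $\beta$ the substitution $c=vr$, $\overline c=r/v$ in (\ref{systemcon3}) together with $\omega v^2=e^{i\theta}$ already factors out $\dfrac{v}{e^{i\theta}r}$ and leaves precisely the rational factor of (\ref{betaexpr}), with essentially no computation. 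For $\alpha$, the same substitution in (\ref{systemcon2}) gives $\alpha=\dfrac{1}{vr}\cdot\dfrac{pr^2-v^2r^2+v^2p^2-p}{p^2-1}$; using $\dfrac1{vr}=\omega\cdot\dfrac{v}{e^{i\theta}r}$ turns this into $\dfrac{v}{e^{i\theta}r}\cdot\dfrac{(p\omega)r^2-e^{i\theta}r^2+e^{i\theta}p^2-(p\omega)}{p^2-1}$, into which I substitute the closed forms for $p\omega$ and $p^2=(p\omega)(p/\omega)$ and simplify to obtain (\ref{alphaexpr}).

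Finally, $t$: substituting the forms $\alpha=\tfrac{v}{e^{i\theta}r}R_\alpha$, $\beta=\tfrac{v}{e^{i\theta}r}R_\beta$ (with $R_\alpha,R_\beta$ rational in $e^{i\theta},z,\overline z,q,r^2$) into (\ref{texprab}) and using $|e^{i\theta}|=|v|=1$ gives $\alpha\overline\beta=\tfrac1{r^2}R_\alpha\overline{R_\beta}$ and $|\alpha|^2=\tfrac1{r^2}|R_\alpha|^2$, so the overall factor $v$ cancels between numerator and denominator; substituting (\ref{rsexpr}) for $r^2$ and simplifying yields (\ref{texpr}). The closing claim is then immediate: $r^2$ and $t$ are manifestly functions of $e^{i\theta},z,\overline z,q$ only, and the single unimodular factor $v/(e^{i\theta}r)$ in $\alpha$ and $\beta$ drops out upon taking absolute values, so $|c|,t,|\alpha|,|\beta|$ involve no further instances of $\omega$ or $v$.

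The only real obstacle is the size of the algebra — the denominators $P$ and $Q$ are unwieldy, and one must be confident that the many cancellations of $\omega$ and $v$ actually occur. Writing $A:=p\omega$, $B:=p/\omega$ as above (so that the fundamental polynomial (\ref{fundamentalpoly}) is just $p^2=AB$) and rewriting every monomial in $p,\omega,v$ that appears in Proposition~\ref{systemsprop} as a monomial in $A,B,e^{i\theta},v,r$ before simplifying is what makes those cancellations transparent and the computation manageable.
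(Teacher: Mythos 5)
Your proposal is correct and follows essentially the same route as the paper: you substitute the closed forms for $p$ and $\overline p$ (equivalently $p\omega$ and $p/\omega$) obtained in Proposition~\ref{fundpolyprop}, write $c=rv$ with $\omega v^2=e^{i\theta}$, and reduce the expressions of Proposition~\ref{systemsprop} by computer algebra, exactly as the paper does. Your bookkeeping with $A=p\omega$, $B=p/\omega$ and the use of the reduced form of $r^2$ from the proof of Proposition~\ref{fundpolyprop} is only an organizational variant of the same substitutions.
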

\begin{proof}
By (\ref{pexpr}) of proposition 2, we have
\begin{align*}
p =\frac{(e^{i\theta}q - 1)(\overline{z}e^{2i\theta} - z)}{\omega(e^{2i\theta} - 1)(\overline{z}e^{i\theta} - 1)}.
\end{align*}
Taking the conjugate of the expression above and simplifying gives
\begin{align}
\overline{p}=\frac{\omega(e^{i\theta} - q)(\overline{z}e^{2i\theta} - z)}{(e^{2i\theta} - 1)(e^{i\theta} - z)}.\label{bpexpr}
\end{align}
Equation (\ref{systemcon5}) of proposition \ref{systemsprop} states
 \begin{align*}
        r^2=\frac{p(1-\overline{p}v^2)(|p|^2\omega v^2 - \overline{z}\omega qv^2 + \overline{z}\omega pq - |p|^2z - \omega p + z)}{(|p|^2\omega v^2 - \overline{z}\omega q v^2 + \overline{z}\omega pq - |p|^2q - \omega p + q)(p-v^2 )}.
    \end{align*}
By substituting $rv$ for $c$, (\ref{pexpr}) for $p$, and $(\ref{bpexpr})$ for $\overline{p}$ into (\ref{systemcon5}), and simplifying (\ref{systemcon5}) gives (\ref{rsexpr}).
Next, again by proposition 1, we have
\begin{align}\label{texpragain}
    t=\frac{(p\overline{p} - 1)(|c|^2\overline{c}\omega p - \overline{c}p\overline{p}z - c|c|^2\omega + p\overline{p}c\omega - \overline{c}\omega p + cz)(\overline{c}-\overline{z}c\omega  )}{P},\end{align}

\noindent By substituting $rv$ for $c$, (\ref{pexpr}) for $p$, and $(\ref{bpexpr})$ for $\overline{p}$ into (\ref{texpragain}), and simplifying gives $(\ref{texpr})$. Repeating these substitutions into
\begin{align*}
&\alpha=\frac{\overline{c} |c^2|p - |c|^2c + cp\overline{p}  - \overline{c}p}{|c|^2(p\overline{p} - 1)}\text{ and }\beta=\frac{\overline{c}|c|^2z - c|c|^2\omega + |z|^2c\omega- \overline{c}z}{\omega |c|^2(|z|^2 - 1)},
\end{align*}
gives (\ref{alphaexpr}) and (\ref{betaexpr}).
\end{proof}

\begin{proposition}\label{(0,z)formulaprop}
Assume that $(0,z)$ is in region 2 $($that is: $|z|<p/q)$, $z\in\D\setminus (-1,1)$, and $p$ and $q$ are real. Then
    \begin{equation}\begin{split}
    &g_{\D^2}((0,z),(p,0),(0,q))=\log|cm_\gamma(c)|\\&=\log\Big|\frac{(e^{i\theta}q - 1)(\overline{z}e^{2i\theta} - z)(q-z)}{\overline{z}q^2e^{2i\theta} - |z|^2qe^{2i\theta} + \overline{z}e^{2i\theta}- qe^{2i\theta} - 2\overline{z} qe^{i\theta} + 2 qze^{i\theta} + q|z|^2 - q^2z + q - z}\Big|,\label{cdexpr}
    \end{split}\end{equation}
    where $e^{i\theta}$ is the unique unimodular constant such that:
    \end{proposition}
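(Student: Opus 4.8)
The plan is to run the reduction of Section~\ref{generalComp} and then unwind the parametrization obtained in Propositions~\ref{systemsprop}--\ref{ParametersProp}. Using the automorphism $T$ of (\ref{Tauto}) with $(z_1,z_2)=(0,z)$, together with a rotation of the first coordinate to normalize the sign of $p$, one has $g_{\D^2}((0,z),(p,0),(0,q))=g_{\D^2}((0,0),(p,z),(0,m_z(q)))$; and since $(0,z)$ lies in region~2, the triple $(0,0),(p,z),(0,m_z(q))$ satisfies the region~2 conditions of Section~\ref{generalComp}. On region~2 the inequalities (\ref{cgl ineq}) together with the Lempert--Carath\'eodory equality of \cite{KTZ17} give $g_{\D^2}=l_{\D^2}$, and that equality is witnessed by an extremal disk $\phi(\lambda)=(\lambda m_\alpha(\lambda),\omega\lambda m_\beta(\lambda))$ with $\phi(c)=(p,z)$, $\phi(m_\gamma(c))=(0,m_z(q))$ and $\gamma=t\alpha+(1-t)\beta$. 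Consequently $g_{\D^2}((0,z),(p,0),(0,q))=\log|c\,m_\gamma(c)|$, and everything reduces to evaluating $|c\,m_\gamma(c)|$.

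The crucial observation is that $m_\gamma(c)=\alpha$: the first coordinate of $\phi(m_\gamma(c))$ equals $m_\gamma(c)\,m_\alpha(m_\gamma(c))$, which must vanish while $m_\gamma(c)\neq0$ (otherwise $\phi(m_\gamma(c))=(0,0)$, impossible since $z\neq q$ on region~2), so $m_\alpha(m_\gamma(c))=0$, i.e.\ $m_\gamma(c)=\alpha$; hence $g_{\D^2}((0,z),(p,0),(0,q))=\log|c\alpha|$. Writing $c=rv$ with $r>0$, $|v|=1$, the prefactor $v/(e^{i\theta}r)$ in the formula (\ref{alphaexpr}) for $\alpha$ has modulus $1/r$, so $|c\alpha|=r\cdot\tfrac1r\cdot|N/D|=|N/D|$, where $N,D$ are the numerator and denominator polynomials appearing in (\ref{alphaexpr}), which involve only $r^2,z,\overline z,q$ and $e^{i\theta}$. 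Substituting the value (\ref{rsexpr}) of $r^2=|c|^2$ into $N/D$ and carrying out the algebraic simplification (by the same kind of substitution-and-simplification, done with computer algebra, used in the proof of Proposition~\ref{ParametersProp}) produces (\ref{cdexpr}); in fact the denominator of (\ref{rsexpr}) reappears as the denominator of (\ref{cdexpr}), so the simplification is essentially the clearing of that common factor.

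It remains to establish that $e^{i\theta}$ is the \emph{unique} unimodular constant with the required properties. By Proposition~\ref{fundpolyprop}, any admissible choice of extremal-disk parameters forces $e^{i\theta}=\omega v^2$ to be a unimodular root of the fundamental polynomial (\ref{fundamentalpoly}); the lemma preceding Proposition~\ref{zrealprop} shows such a root always exists; and Propositions~\ref{omegavprop} and~\ref{ParametersProp} show that, with $e^{i\theta}$ fixed, the remaining data $\omega,v,r,t,\alpha,\beta$ are determined, the sign of the square root in (\ref{omegaexpr})--(\ref{vexpr}) being pinned down by $\omega v^2=e^{i\theta}$. Uniqueness thus reduces to showing that among the unimodular roots of (\ref{fundamentalpoly}) exactly one yields parameters satisfying the admissibility constraints $0<r<1$, $t\in(0,1)$, $|\alpha|,|\beta|<1$. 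I would obtain this from the essential uniqueness of the extremal disk for the Lempert function on region~2 (equivalently, of the solution $G$ to the Pick problem in \cite{KTZ17}), supplemented where necessary by a direct sign analysis of (\ref{realpoly2}) and of the positivity in (\ref{rsexpr}); the dichotomy of Proposition~\ref{zrealprop} ($z$ real $\iff e^{i\theta}=\pm1$) and the symmetry $\theta\mapsto-\theta$ noted after that lemma organize the case analysis.

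The main obstacle I anticipate is precisely this uniqueness/admissibility step, not the algebra: the reduction of $\log|c\alpha|$ to (\ref{cdexpr}), though lengthy, is mechanical once the substitutions are in place. The delicate point is to single out, among the (up to six) unimodular roots of the degree-six polynomial (\ref{fundamentalpoly}), the one whose associated disk genuinely maps into $\D^2$ with $t\in(0,1)$ and preimages in $\D$, and to discard the spurious roots --- this is where the bookkeeping is heaviest and where one must keep the two square-root branches of (\ref{omegaexpr})--(\ref{vexpr}) straight.
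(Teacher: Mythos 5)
Your proposal follows essentially the same route as the paper: reduce to the triple $(0,0),(p,z),(0,m_z(q))$, invoke the KTZ extremal-disk form on region 2 with $m_\gamma(c)=\alpha$, and obtain (\ref{cdexpr}) by substituting the value of $r^2$ from (\ref{rsexpr}) into (\ref{alphaexpr}) and simplifying, with the denominator of (\ref{rsexpr}) surviving as the denominator of (\ref{cdexpr}). The uniqueness/admissibility step you flag as the main obstacle is handled in the paper exactly along the lines you propose, by citing \cite{KTZ17}: the parametrizing map $\Phi$ is $2$-to-$1$ onto region 2 with the symmetry $(\alpha,\beta,c)\mapsto(-\alpha,-\beta,-c)$, so $c=rv$ is unique up to sign and $\omega$ is unique, and since the sign flip leaves $v^2$ unchanged, $e^{i\theta}=\omega v^2$ is the unique admissible root (existence of an admissible root coming from Lemma 3 of \cite{KTZ17} because $(0,z)$ lies in region 2, together with the observation that $\max\{p,|z|\}<|c|<1$ already forces $\alpha,\beta\in\D$).
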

    \begin{align}
         (e^{i\theta} - z)(\overline{z} e^{i\theta} - 1)(e^{2i\theta} - 1)^2p^2 - (e^{i\theta} - q)(\overline{z} e^{2i\theta} - z)^2(qe^{i\theta} - 1)=0.\label{zreq1}
        \end{align}
        \begin{align}
        0<t=\frac{q(\overline{z} qe^{2i\theta} - e^{2i\theta} - \overline{z}e^{i\theta} + e^{i\theta}z - qz + 1)(\overline{z}e^{i\theta}- 1)(e^{i\theta} - z)}{Q}<1\label{zreq2}
        \end{align}
        
        \begin{align}\max\{|z|,p\}<|c|<1
        \text{ where } \label{zreq3}\end{align}
        \begin{align}
        |c|^2=r^2=\Big|\frac{(e^{i\theta} - q)(\overline{z} - z)(\overline{z} e^{2i\theta}  - z)(e^{i\theta} q - 1)}{(\overline{z} q^2 e^{2i\theta}  - |z|^2q e^{2i\theta}  + \overline{z} e^{2i\theta}  - qe^{2i\theta}  - 2\overline{z} qe^{i\theta}  + 2q e^{i\theta} z + q|z|^2 - q^2z + q - z)(e^{2i\theta}  - 1)}\Big|,\label{zreq4}
    \end{align}
    and $Q$ is defined in proposition 5 ($Q$ depends only on $z,p,q$ and $e^{i\theta}$).
\begin{proof}
As was done previously, pick $(0,z)$ in region 2 and consider the triple of points
$(0,0),(p,z),$ and $(0,m_z(q))$ in region 2 with $z\in \D\setminus{(-1,1)}$. Lemma 3 of \cite{KTZ17} show that there exists $\alpha,\beta,c\in \D$, $t\in (0,1)$, and $\omega \in \mathbb{T}$ that satisfy conditions (\ref{blaschkecon1})-(\ref{blaschkecon5}). Propositions 1 and 2 then concludes that there must exist $e^{i\theta}=\omega 
v^2$ such that
 \[(e^{i\theta} - z)(\overline{z} e^{i\theta} - 1)(e^{2i\theta} - 1)^2|p|^2 - (e^{i\theta} - q)(\overline{z} e^{2i\theta} - z)^2(qe^{i\theta} - 1)=0.\]
 Now, there exists more then one unimodular constant that satisfies this equation for a given $z,p,$ and $q$. Finding a unimodular constant that satisfies (\ref{fundamentalpoly}) only guarantees that the expressions for $|c|$ and $t$ are real valued. However, we need $c,\alpha,\beta \in \D$ and $t\in(0,1)$. Kosi\'{n}ski, Thomas, and Zwonek \cite{KTZ17} show that the following map $\Phi:\D^3\times \mathbb T\times (0,1)\rightarrow \D^4$, 
\[\Phi(\alpha,\beta,c,\omega,t)=(cm_{\alpha}(c),\omega c m_{\beta}(c),m_\gamma(c)m_{\alpha}(m_\gamma(c)),\omega m_{\gamma}(c) m_{\beta}(m_\gamma(c))),\]
$\gamma=\alpha t+(1-t)\beta$, is a 2 to 1 map onto region 2 with $\Phi(-\alpha,-\beta,-c,\omega,t)=\Phi(\alpha,\beta,c,\omega,t)$. This proves there will be a unimodular constant $e^{i\theta}$ that satisfies (\ref{fundamentalpoly}) and so that $c,\alpha,\beta\in \D$ and $t\in (0,1)$. Next, note that $\max\{p,|z|\}<|c|<1$ implies that $\alpha,\beta\in\D$ because 
\[m_{\alpha}(c)=\frac{p}{c}, \hspace{3mm} m_{\beta}(c)=\frac{z}{c}.\]
So there will be a unimodular constant $e^{i\theta}$ such that the conditions stated in this proposition hold. Further, the paper \cite{KTZ17} shows that $c=rv$ (up to a sign) and $\omega$ are unique, so there will exactly one such $\omega v^2=e^{i\theta}$. Recall that $g_{\D^2}$ is equal to $\log|cm_{\gamma}(c)|=\log|c\alpha|$ because $(0,z)$ is in region 2 and $m_{\gamma}(c)=\alpha$. The expression for $g_{\D^2}$ stated in (\ref{cdexpr}) is found by substituting in the the expression for $r$ given in (\ref{zreq4})  into (\ref{alphaexpr}), and then multiplying this expression by (\ref{zreq4}) and simplifying.
\end{proof}
\noindent\textbf{Remark:}
The fact that the parameters $t$ and $r^2$ are real can be seen directly by substituting in $z=x+iy$ and $e^{i\theta}=\cos(\theta)+i\sin(\theta)$. Using Maple to simplify equations (\ref{zreq4}) and (\ref{zreq2}) with these substitutions give the expressions below. We have $|c|^2$ is
\begin{align*}
    &|c|^2=r^2=A_1/B_1,
    \end{align*}
    where
    \begin{align*}
        A_1=y(1-2\cos(\theta)q + q^2)(\cos(\theta)y - \sin(\theta)x)
    \end{align*}
    and 
    \begin{align*}
        &B_1=\sin(\theta)\cos(\theta)q^2y + \cos^2(\theta)q^2x - \cos^2(\theta)qx^2 - \cos^2(\theta)qy^2 + y\sin(\theta)\cos(\theta) \\&- 2\sin(\theta)qy - \cos^2(\theta)q + \cos^2(\theta)x - q^2x + qx^2 + qy^2 + q - x.
    \end{align*}
    Likewise a real expression for $t$ is given by
    \begin{align*}
    &t=A_2/B_2
\end{align*}
where
\begin{align*}
    &A_2=q(4\cos^2(\theta)qxy - 2\cos(\theta)\sin(\theta)qx^2 + 2\cos(\theta)\sin(\theta)qy^2 - \cos(\theta)qx^2y - \cos(\theta)qy^3\\& + \sin(\theta)qx^3 + \sin(\theta)qxy^2 - 2y\cos^2(\theta) + 2\cos(\theta)\sin(\theta)x - \cos(\theta)qy - 2\cos(\theta)xy+ \sin(\theta)qx\\&  - \sin(\theta)x^2 - 3\sin(\theta)y^2 - 2yqx + yx^2 + y^3 - \sin(\theta) + 3y)
\end{align*}
and
\begin{align*}
    &B_2=4yq^2\cos^2(\theta)x - 2yq\cos^2(\theta)x^2 - 2y^3q\cos^2(\theta) - 2\cos(\theta)\sin(\theta)q^2x^2 + 2\cos(\theta)\sin(\theta)q^2y^2 \\&+ 2\cos(\theta)\sin(\theta)qx^3 + 2\cos(\theta)\sin(\theta)qxy^2 - 2\cos(\theta)q^2y - 2\cos(\theta)qxy + \cos(\theta)x^2y + \cos(\theta)y^3 \\&+ 2\sin(\theta)q^2x - \sin(\theta)qx^2 - 3\sin(\theta)qy^2 - \sin(\theta)x^3 - \sin(\theta)xy^2 - 2yq^2x + yqx^2 + y^3q - \cos(\theta)y \\&- q\sin(\theta) + \sin(\theta)x + 3yq.
\end{align*}

\noindent For every $|z|<\min\{p/q,1\}$ there will be a unique $e^{i\theta}$ as in the proposition above.
\begin{definition} 
Define
\begin{align*}
    &A(1):=\{x\in (-1,1):\max\{-1,-p/q\}<x<-p\}\\&
    A(-1):=\{x\in(-1,1):-p<x<\min\{1,p/q\}\}\\&
    A(e^{i\theta}):=\{z\in\D\setminus (-1,1): |z|<p/q, (\ref{zreq1}), (\ref{zreq2}), (\ref{zreq3} ) \} \text{ for } e^{i\theta}\neq \pm1.
\end{align*}
\end{definition}
\noindent Note that all $(0,z)$ in region two will fall into a unique $A(e^{i\theta})$.

\begin{lemma}\label{Alem}
If $A(e^{i\theta})$ is nonempty, then it is a single line segment in the plane. 
\end{lemma}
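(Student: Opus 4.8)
The plan is to reduce the defining equation \eqref{zreq1} of $A(e^{i\theta})$ to a pair of lines and then determine which one, and which arc of it, survives the open conditions \eqref{zreq2}--\eqref{zreq4}. The cases $e^{i\theta}=\pm1$ are immediate, since $A(1)$ and $A(-1)$ are open real intervals by definition, so fix $e^{i\theta}\ne\pm1$ and suppose $A(e^{i\theta})\ne\emptyset$. First I would pass to the rotated coordinate $w:=ze^{-i\theta}=a+ib$, an isometry of $\C$, and invoke the real form \eqref{realpoly1} of the fundamental polynomial, which in this coordinate reads
\[
p^{2}\sin^{2}\theta\,(a-1)^{2}=\big((q-\cos\theta)^{2}+(1-p^{2})\sin^{2}\theta\big)\,b^{2}.
\]
In Region 2 one has $0<p<1$, and $\sin\theta\ne 0$ since $e^{i\theta}\ne\pm1$, so both sides are squares of real linear forms in $(a,b)$ with nonzero leading coefficients; their difference factors, so the zero set of \eqref{zreq1} for this fixed $e^{i\theta}$ is the union of two distinct lines $\ell_{+},\ell_{-}$, each through $w=1$ (i.e.\ through the boundary point $z=e^{i\theta}$), and $A(e^{i\theta})\subset\ell_{+}\cup\ell_{-}$.

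Next I would show that only one of these lines can meet $A(e^{i\theta})$. The involution $\sigma:z\mapsto\bar z\,e^{2i\theta}$ is just $w\mapsto\bar w$ in the $w$-coordinate, so it fixes $e^{i\theta}$, preserves $|z|$, and interchanges $\ell_{+}$ and $\ell_{-}$. Conjugating equation \eqref{uomegaeqn} should show that $\sigma$ flips the sign of the square root in Proposition \ref{omegavprop}, i.e.\ it carries the extremal configuration with selected constant $e^{i\theta}$ to one with selected constant $\overline{e^{i\theta}}\ne e^{i\theta}$; since each admissible $z$ has a unique selected constant (the uniqueness noted after Proposition \ref{(0,z)formulaprop}), $A(e^{i\theta})$ cannot meet both lines. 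Write $\ell$ for the surviving one.

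It then remains to see that $A(e^{i\theta})\cap\ell$ is connected. Parametrize $\ell$ affinely by $s$ with $s=0$ at $e^{i\theta}$. The conditions left in the definition of $A(e^{i\theta})$ are: $z\in\D$ and $|z|<p/q$, each the intersection of $\ell$ with an open disk and hence an interval in $s$; the two-sided inequalities $0<t(z)<1$ and $\max\{p,|z|\}<r(z)<1$ coming from \eqref{zreq2}--\eqref{zreq4}, where $t$ and $r^{2}$ are explicit rational functions of $z$ (and $q$) by the Remark after Proposition \ref{(0,z)formulaprop}, hence rational in $s$ on $\ell$; and $z\notin(-1,1)$. I would argue that $t(s)$ and $r^{2}(s)$ are monotone on the sub-arc of $\ell$ lying in $\D\cap\{|z|<p/q\}$, so each inequality cuts out an $s$-interval and their common solution set is again an interval. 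Finally, if $\ell$ crosses $(-1,1)$ at some $z_{0}$, then by Proposition \ref{zrealprop} the selected constant at $z_{0}$ would be $\pm1\ne e^{i\theta}$, so $z_{0}$ violates one of the open conditions \eqref{zreq2}--\eqref{zreq3}; by openness the violation persists on a neighbourhood of $z_{0}$ in $\ell$, so $z_{0}$ and a surrounding arc lie outside $A(e^{i\theta})$ and no disconnection occurs. This yields that $A(e^{i\theta})$ is a single line segment.

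The main obstacle is the monotonicity of $t(s)$ and $r^{2}(s)$ along $\ell$: a priori $\{0<t<1\}$ and $\{\max\{p,|z|\}<r<1\}$ are only sub-/super-level sets of rational functions, which need not be connected. Rather than a brute-force derivative computation, I expect the cleanest route uses the $2$-to-$1$ parametrization $\Phi$ of Section \ref{prelimsection}: along $\ell$ the induced data $(\alpha(s),\beta(s),c(s))$ moves continuously via the formulas of Proposition \ref{ParametersProp}, with $|c|$ tied to $p$ and $|z|$ through $|m_{\alpha}(c)|=p/|c|$ and $|m_{\beta}(c)|=|z|/|c|$, and $A(e^{i\theta})$ is precisely where this path lies in $\D^{3}$ with $t\in(0,1)$; the structure of $\Phi$ should then force the failure locus on $\ell$ to be the complement of an interval. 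The sign bookkeeping in the second step is routine but must be carried out carefully by conjugating \eqref{uomegaeqn}.
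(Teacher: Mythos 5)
Your first step (for fixed $e^{i\theta}\neq\pm1$ the zero set of \eqref{realpoly1} is a pair of lines through $e^{i\theta}$, symmetric under $w\mapsto\overline w$ in the rotated coordinate) is exactly the paper's starting point. The genuine gap is in your second step, which is the heart of the lemma. You propose to exclude one of the two lines by claiming that the involution $\sigma(z)=\overline z e^{2i\theta}$ ``flips the sign of the square root'' in Proposition \ref{omegavprop} and hence changes the selected constant from $e^{i\theta}$ to $\overline{e^{i\theta}}$, and that this follows from conjugating \eqref{uomegaeqn}. This cannot work as stated: equation \eqref{uomegaeqn} involves only $\omega,e^{i\theta},p,q$ and contains no occurrence of $z$, so conjugating it says nothing about how the map $z\mapsto\overline z e^{2i\theta}$ (with $e^{i\theta}$ held fixed) interacts with the branch of $\omega$; and switching the root of the quadratic \eqref{uomegaeqn} for fixed $e^{i\theta}$ produces the \emph{other} $\omega$ attached to the \emph{same} $e^{i\theta}$, not a configuration with constant $\overline{e^{i\theta}}$. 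Moreover $\sigma$ is not a symmetry of the problem: the honest symmetry is plain conjugation $z\mapsto\overline z$ (which sends $A(e^{i\theta})$ onto $A(e^{-i\theta})$), whereas $\sigma$ is conjugation followed by a rotation of the second coordinate, and that rotation moves the pole $(0,q)$. Finally, even granting a correct ``branch flip,'' uniqueness of the selected constant for each individual $z$ does not by itself prevent two different points, one on each line, from both lying in $A(e^{i\theta})$; one also needs that all points of $A(e^{i\theta})$ share one and the same $\omega$.

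The mechanism that actually closes this step --- and the one the paper uses --- is the $z$-dependent expression \eqref{pexpr} for $\omega$: with $\zeta=ze^{-i\theta}$, the quantity $e^{i\theta}(e^{i\theta}q-1)(\overline\zeta-\zeta)\big/\bigl(p(e^{2i\theta}-1)(\overline\zeta-1)\bigr)$ is constant along each line through $\zeta=1$, and comparing its values on the two reflected lines gives equality only when $(\overline\zeta-\zeta)(\zeta-2+\overline\zeta)(e^{i\theta}q-1)=0$, which is impossible for nonreal $z\in\D$ and $0<q<1$. Since $\omega$ is determined by $e^{i\theta},p,q$ alone (Proposition \ref{omegavprop}) and is therefore the same for every $z\in A(e^{i\theta})$, only one line can meet $A(e^{i\theta})$. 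Your third step (monotonicity of $t$ and $r^2$ along the surviving line, to get connectedness) is flagged by you as the main obstacle and is not carried out; note that the paper's own proof stops once containment in a single line is established, so the decisive missing piece in your write-up is the second step above, for which you would need \eqref{pexpr} (or an equivalent $z$-dependent formula for $\omega$) rather than \eqref{uomegaeqn}.
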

\begin{proof}
Fix $\theta$. If $e^{i\theta}=\pm 1$, then the lemma follows immediately. Assume $e^{i\theta}\neq \pm 1$. If $A(e^{i\theta}$) is nonempty, it follows from equation (\ref{realpoly1}) that $A(e^{i\theta})$ is at most the union of two line segments passing through $(1,0)$. Set $\zeta:=ze^{-i\theta}$. Then equation  (\ref{realpoly1}) becomes
\begin{align*}
    \omega = \frac{(e^{i\theta} q  - 1)(\overline{z}e^{2i\theta} - z)}{ p(e^{2i\theta} - 1)(\overline{z}e^{i\theta} - 1)}=\frac{e^{i\theta}(e^{i\theta}q-1)(\overline\zeta-\zeta)}{p(e^{2i\theta}-1)(\overline\zeta-1)}.
\end{align*}
Now, the two lines provided by equation (\ref{realpoly1}) are symmetric about the $x$ axis. Thus if $\zeta$ is on one line segment, then $\overline\zeta$ is on the other line segment. However, we know that $\omega$ is constant when $e^{i\theta}$ is fixed. Thus
\begin{align*}
    \frac{e^{i\theta}(e^{i\theta}q-1)(\overline\zeta-\zeta)}{p(e^{2i\theta}-1)(\overline\zeta-1)}=\frac{e^{i\theta}(e^{i\theta}q-1)(\zeta-\overline\zeta)}{p(e^{2i\theta}-1)(\zeta-1)}
\end{align*}
However, one can compute that 
\begin{align*}
    \frac{e^{i\theta}(e^{i\theta}q-1)(\overline\zeta-\zeta)}{p(e^{2i\theta}-1)(\overline\zeta-1)}-\frac{e^{i\theta}(e^{i\theta}q-1)(\zeta-\overline\zeta)}{p(e^{2i\theta}-1)(\zeta-1)}=0 \Longleftrightarrow (\overline{\zeta}-\zeta)(\zeta-2+\overline{\zeta})(e^{i\theta}q-1)=0.
\end{align*}
Note that $(\overline{\zeta}-\zeta)(\zeta-2+\overline{\zeta})(e^{i\theta}q-1)\neq 0$ unless $\zeta$ is real, $|\zeta|\geq 1$, or $q$ is unimodular. But $\zeta$ is not real unless $z$ is real. Thus $A(e^{i\theta})$ cannot contain points on both lines.

\end{proof} 

\subsection{An Explicit formula when $z$ is real valued.}\label{formulazreal}

Let $z\in (-1,1)$ and $p,q>0$. Recall that $(0,z)$ is in region 2 if $|z|<p/q$. If $p\geq q$, then $p/q\geq 1$ and all $(0,z)$ are in region 2. In this case we have
\begin{align}
    g_{\D^2}((0,z),(p,0),(0,q))=
     \log\max\Big\{\Big| \frac{p(q - z)}{pq + qz + p + q}\Big|,\Big| \frac{p(q - z)}{-pq - qz + p + q}\Big|\Big\}.
\end{align}
If $p<q$, then we have some of region 1.
\begin{align}\label{zrealformula}
    g_{\D^2}((0,z),(p,0),(0,q))=\begin{cases} \log\Big|z\frac{q-z}{1-qz}\Big|& p/q\leq |z|<1\\
     \log\max\Big\{\Big| \frac{p(q - z)}{pq + qz + p + q}\Big|,\Big| \frac{p(q - z)}{-pq - qz + p + q}\Big|\Big\} & 0\leq |z|<p/q
    \end{cases}
\end{align}
These two formulas are derived in the next two sections. \\
\\
\subsubsection{\large The Case When $v=i$ and $\omega =-1$}
 Choosing $v=i$ and $\omega=-1$ and simplifying (\ref{systemcon5}) gives
\begin{align}\label{rsexprzreal1}
|c|^2=\frac{(-pz - qz + p + z)p}{(-pq - qz + p + q)},\hspace{3mm} \max\{p,|z|\}<|c|<1.
\end{align}
We need to check that $\max\{p^2,|z|^2\}<|c|^2<1$. It is clear that $(-pq - qz + p + q)>0$ for $-1<z<1$. Thus
\[|c|^2>p^2\Longleftrightarrow (-pz - qz + p + z)p-p^2(-pq - qz + p + q)>0\Longleftrightarrow p(p - 1)(p +z)(q-1)>0,\]
\[|c|^2>z^2\Longleftrightarrow (-pz - qz + p + z)p-z^2(-pq - qz + p + q)>0\Longleftrightarrow (1-z)(p+z)(p-zq)>0,\]
\[|c|^2<1\Longleftrightarrow (-pz - qz + p + z)p-(-pq - qz + p + q)<0\Longleftrightarrow (1-z )(p -1)(p+q)<0.\]
For the three inequalities above to be satisfied, we need $-p<z<p/q$.
Next, compute $t$ using (\ref{systemcon1}):
\begin{align}\label{texprzreal1}
t=\frac{q(z - 1)(p - 1)}{(-2pq +pz - qz + p + q)},\hspace{3mm}0<t<1.\end{align}
One may check that the denominator of $t$ will always be positive when $-p<z<\min\{1,p/q\}$.  Thus
\[t<1 \Longleftrightarrow q(z - 1)(p - 1)-(-2pq +pz - qz + p + q)<0\Longleftrightarrow p(z + 1)(q - 1)<0. \]
Thus $0<t<1$ when $-p<z<\min\{1,p/q\}$.
We see this case corresponds to $A(1)$. The value of the pluricomplex Green function is
\[g_{\D^2}((0,z),(p,0),(0,q))=\log|c\alpha|=\log\Big|\frac{p(q - z)}{-pq - qz + p + q}\Big|.\]

\subsubsection{\large The Case When $v=1$ and $\omega=-1$}
Choose $v=1$ and $\omega=-1$ and 
repeat as before. One finds that when $-p/q<z<-p$, then (\ref{zreq1}), (\ref{zreq2}), and (\ref{zreq3}) hold.
The value of the Green function is
\[g_{\D^2}((0,z),(p,0),(0,q))=\log|c\alpha|=\log\Big| \frac{p(q - z)}{pq + qz + p + q}\Big|.\]
These two family of disks corresponds to the case $z\in A(1)$ and $z\in A(-1)$. Lastly, note that $pq+qz+p+q\leq -pq-qz+p+q\Longleftrightarrow 2q(p+z)\leq 0\Longleftrightarrow z<-p$. Hence the Green function is the maximum of the two derived expressions as stated above.\\ \\
\textbf{Remark:} Formula (\ref{uomegaeqn}) implies that if $\omega=1$, then $e^{i\theta}$ is $\pm 1$. But then proposition (\ref{zrealprop}) states that $z$ must be real. The calculations in this section shows that $\omega=-1$ when $z$ is real. So this implies that $\omega$ is never equal to 1. If $p$ or $q$ were not assumed to be real, then $\omega$ may equal 1.

\subsection{An Explicit Formula When $e^{i\theta}=\pm i$}\label{z=pmi}
Recall that we are assuming $p,q>0$.
\begin{proposition}
Let the polynomial below be the polynomial from equation (\ref{fundamentalpoly}):
\[P(z,\overline{z},e^{i\theta}):= (e^{i\theta} - z)(\overline{z} e^{i\theta} - 1)(e^{2i\theta} - 1)^2p^2 - (e^{i\theta} - q)(\overline{z} e^{2i\theta} - z)^2(qe^{i\theta} - 1).\]
 If
\begin{equation}\label{u=izreq}
\begin{split}
&p< |z|<\min\{p/q,1\},  
 \text{ Im}(z)>0, \text{ and } P(z,\overline{z},i)=0 \\&\text{  or equivalently  } p^2=\frac{(q^2+1)(\text{Re}(z))^2}{|z|^2+1-2\text{Im}(z)},
\end{split}
\end{equation}
then
\[g_{\D^2}((0,z),(p,0),(0,q))=\log\Big|\frac{(\overline{z} + z)(q - z)(q + i)}{2i\overline{z}q - 2iqz + \overline{z}q^2 - 2|z|^2q + q^2z + \overline{z} - 2q + z}\Big|.\]
The $z\in A(i)$ are of the form
\[z=\frac{(y - 1)p}{\sqrt{-p^2 + q^2 + 1}} + iy, \text{ with } p< |z|<\min\{p/q,1\},\hspace{1mm}  y>0, \]
and the formula for pluricomplex Green function may be restated as follows:
\begin{align*}
&g_{\D^2}((0,z),(p,0),(0,q))=\log\Big|\frac{p((yi-q)\sqrt{-p^2 + q^2 + 1}  + py - p)}{(p\sqrt{-p^2 + q^2 + 1} - qy + q)(i-q)}\Big|.
\end{align*}
Similarly, if
\begin{align}\label{u=-izreq}
p< |z|<\min\{p/q,1\}, \text{Im}(z)<0, \text{ and } P(z,\overline{z},-i)=0 \text{  or equivalently  } p^2=\frac{(q^2+1)(\text{Re}(z))^2}{|z|^2+1+2\text{Im}(y)},\end{align}
then
\[g_{\D^2}((0,z),(p,0),(0,q))=\log\Big|\frac{(\overline{z} + z)(q - z)(q - i)}{-2i\overline{z}q + 2iqz + \overline{z}q^2 - 2|z|^2q + q^2z + \overline{z} - 2q +z}\Big|.\]
The $z\in A(-i)$ are of the form
\[z=-\frac{(y+1)p}{\sqrt{-p^2+q^2+1}}+iy \text{ with } p< |z|<\min\{p/q,1\},\hspace{1mm} y<0,\]
and the formula for $g_{\D^2}$ may be restated as follows
\[g_{\D^2}((0,0),(p,0),(0,q))=\Big|\frac{p((yi-q)\sqrt{-p^2 + q^2 + 1}  - py - p)}{(p\sqrt{-p^2 + q^2 + 1} + qy + q)(q + i)}\Big|.\]
\end{proposition}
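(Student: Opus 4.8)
The plan is to specialize the general $(0,z)$ formula of Proposition~\ref{(0,z)formulaprop} to the single unimodular constant $e^{i\theta}=i$, and then obtain the $e^{i\theta}=-i$ statements from it by conjugation. The first step is algebraic book-keeping: substituting $\theta=\pi/2$ (so $\cos\theta=0$, $\sin\theta=1$) into the real form~(\ref{realpoly2}) of the fundamental polynomial turns $P(z,\overline z,i)=0$ into $p^2(x^2+y^2+1-2y)-(q^2+1)x^2=0$ with $z=x+iy$, i.e.\ $p^2(y-1)^2=(q^2+1-p^2)x^2$. Since $0<p<1$ the coefficient $q^2+1-p^2$ is positive, so this is exactly the equivalent condition $p^2=(q^2+1)(\Real z)^2/(|z|^2+1-2\,\Imag z)$ stated in the proposition, and its zero set is the pair of lines $\Real z=\pm p(y-1)/\sqrt{q^2+1-p^2}$, interchanged by $\Real z\mapsto-\Real z$. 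I would also note here that for such a $z$ with $\Imag z\neq 0$ and $p<|z|<\min\{p/q,1\}$ the remaining conditions defining region~2 ($|p|\neq|z|$, $z\neq q$, $m_z(p)\neq z$, and $|z|<|p|/q$) are automatic, so $(0,z)$ lies in region~2 and Proposition~\ref{(0,z)formulaprop} applies.

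The crux is to decide which of the two lines is $A(i)$ and along which subsegment the membership conditions~(\ref{zreq2}) and~(\ref{zreq3}) hold. For this I would substitute $\theta=\pi/2$ and $x=\pm p(y-1)/\sqrt{q^2+1-p^2}$ into the real rational expressions for $t$ and for $r^2=|c|^2$ displayed in the Remark following Proposition~\ref{(0,z)formulaprop}, and test $0<t<1$ and $\max\{p,|z|\}<|c|<1$. As in Section~\ref{formulazreal}, I expect each of the resulting inequalities to factor into linear factors in $y,p,q$, so that the branch with $\Real z<0$ — namely $\Real z=p(y-1)/\sqrt{q^2+1-p^2}$, negative since $0<y<|z|<1$ — satisfies all of~(\ref{zreq2}) and~(\ref{zreq3}) precisely when $\Imag z=y>0$ and $p<|z|<\min\{p/q,1\}$, while the other branch fails one of them. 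By the uniqueness clause of Proposition~\ref{(0,z)formulaprop}, $e^{i\theta}=i$ is then the unique constant attached to such a $z$; solving $\Real z=p(y-1)/\sqrt{q^2+1-p^2}$ for $z$ gives the stated parametrization $z=(y-1)p/\sqrt{-p^2+q^2+1}+iy$ of $A(i)$. I expect this factorization-and-sign analysis to be the main obstacle.

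With $e^{i\theta}=i$ identified the formula is immediate. Put $e^{i\theta}=i$, $e^{2i\theta}=-1$ in~(\ref{cdexpr}): the numerator $(e^{i\theta}q-1)(\overline z e^{2i\theta}-z)(q-z)$ becomes $-(iq-1)(\overline z+z)(q-z)=-i(q+i)(\overline z+z)(q-z)$, and the denominator becomes $(-1)$ times $2i\overline z q-2iqz+\overline z q^2-2|z|^2q+q^2z+\overline z-2q+z$; the overall unimodular, respectively $\pm1$, factors are killed by $|\cdot|$, which yields the claimed formula for $g_{\D^2}((0,z),(p,0),(0,q))$. Substituting the parametrization $\Real z=(y-1)p/\sqrt{-p^2+q^2+1}$, $\Imag z=y$ into this expression and cancelling the common factor produced by $\Real z$ gives the restated form $\log\big|p((yi-q)\sqrt{-p^2+q^2+1}+py-p)/((p\sqrt{-p^2+q^2+1}-qy+q)(i-q))\big|$.

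The $e^{i\theta}=-i$ statements follow from the $e^{i\theta}=i$ case by complex conjugation. Since $p,q$ are real one verifies directly that $P(z,\overline z,i)=0\Longleftrightarrow P(\overline z,z,-i)=0$, so the defining locus of $A(-i)$ is the conjugate of that of $A(i)$; combining this with the automorphism invariance $g_{\D^2}((0,z),(p,0),(0,q))=g_{\D^2}((0,\overline z),(p,0),(0,q))$ (the poles being real) and the identity $\overline{q+i}=q-i$ turns the $i$-formula into the $-i$-formula under $z\mapsto\overline z$, and conjugating the $A(i)$ parametrization gives $z=-(y+1)p/\sqrt{-p^2+q^2+1}+iy$ with $y<0$.
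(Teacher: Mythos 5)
Your proposal is correct and follows essentially the same route as the paper: specialize the general $(0,z)$ result (formula (\ref{cdexpr}) of Proposition \ref{(0,z)formulaprop}) at $e^{i\theta}=i$, use the fundamental polynomial to get the two candidate lines $x=\pm(y-1)p/\sqrt{-p^2+q^2+1}$, and select the correct branch and segment by checking $0<t<1$ and $\max\{p,|z|\}<|c|<1$ — exactly the sign analysis the paper carries out. The only deviation is that you obtain the $e^{i\theta}=-i$ half by the conjugation symmetry $z\mapsto\overline z$ (valid since coordinate-wise conjugation preserves plurisubharmonicity and fixes the real poles), a modest shortcut where the paper simply repeats the computation.
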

\noindent These formulas are derived in the next two sections.
\subsubsection{ \large The extremal disks for $e^{i\theta}=i$}
If $z\in A(i)$, then (\ref{fundamentalpoly}) must hold with $e^{i\theta}=i$. Solving for $p^2$ in (\ref{fundamentalpoly}) gives
the expression for $p^2$ stated in (\ref{u=izreq}).
 Setting $z=x+iy$ and solving for $x$ in the equation above gives
\[z=\frac{\pm(y - 1)p}{\sqrt{-p^2 + q^2 + 1}} + iy.\]
The case with the negative square root
does not correspond to any $z\in \D$. Indeed, some tedious computations show that $t\notin(0,1)$ when $|z|<1$ in this case, so choose the positive square root:
\begin{align}\label{zforthetai}
    z=\frac{(y - 1)p}{\sqrt{-p^2 + q^2 + 1}} + iy.
\end{align}
Note that $|z|<1$ when $(2p^2-q^2-1)/(q^2+1)<y<1$. Assume $0<y<1$. Next, compute $|c|^2$ and $t$
\begin{align*}
&|c|^2=\frac{yp\sqrt{-p^2 + q^2 + 1}}{p\sqrt{-p^2 + q^2 + 1} - qy + q},\hspace{2mm} \\&t=\frac{q(y - 1)(pq + \sqrt{-p^2 + q^2 + 1})}{qy\sqrt{-p^2 + q^2 + 1} + 2p^3 - 2q^2p - q\sqrt{-p^2 + q^2 + 1} - yp - p}.
\end{align*}
It is clear that the expression for $|c|^2$ will be positive when $y>0$ and negative when $y<0$. Next, we 
have
\[|c|^2<1 \Longleftrightarrow (y - 1)(p\sqrt{-p^2 + q^2 + 1} + q)<0.\]
This will always hold for $0<y<1$. Next, for $|c|^2>p^2$, we need
\[|c|^2>p^2 \Longleftrightarrow -p(p^2\sqrt{-p^2 + q^2 + 1} - pqy - y\sqrt{-p^2 + q^2 + 1} + pq)>0.\]
Thus we need
\begin{align*}
&p^2\sqrt{-p^2 + q^2 + 1} - pqy - y\sqrt{-p^2 + q^2 + 1} + pq<0 \\&\Rightarrow y > \frac{p(p\sqrt{-p^2 + q^2 + 1} + q)}{pq + \sqrt{-p^2 + q^2 + 1}}=\frac{p(q\sqrt{-p^2 + q^2 + 1} + p)}{q^2 + 1}.
\end{align*}
Given our parametrization for $z$, this is equivalent to $ |z|>p$.
Next, we have
\begin{align*}
&|c|^2>|z|^2\Longleftrightarrow(y - 1)\\& \cdot\frac{-pq^2y\sqrt{-p^2 + q^2 + 1} + q^3y^2 + p^3\sqrt{-p^2 + q^2 + 1} - 2p^2qy - yp\sqrt{-p^2 + q^2 + 1} + p^2q + qy^2}{(-p^2 + q^2 + 1)}>0.
\end{align*}
\noindent This implies that
\[\frac{p(q\sqrt{-p^2 + q^2 + 1} + p)}{(q^2 + 1)}<y<\frac{p(pq + \sqrt{-p^2 + q^2 + 1})}{q(q^2 + 1)}.\]
Given our parametrization for $z$, this is equivalent to $p<|z|<p/q$.
Next, note that the numerator of $t$ is clearly negative. One may verify that the denominator of $t$ is negative and $t<1$ when $y>0$ and $p<|z|<\min\{p/q,1\}$. So this disk is valid when
\[z=\frac{(y - 1)p}{\sqrt{-p^2 + q^2 + 1}} + iy,\hspace{1mm} y>0,\text{ and } p<|z|<\min\{p/q,1\}.\]
\noindent To compute $\omega$ and $v^2$, substitute in the expression for $z$ given in (\ref{zforthetai}) into equation (\ref{pexpr}) and solve for $\omega$. Then
\[\omega=\frac{-p - i\sqrt{-p^2 + q^2 + 1}}{i-q }\text{  and  } v^2=\frac{i(i-q )}{-p - i\sqrt{-p^2 + q^2 + 1}}.\]
Substituting in $e^{i\theta}=i$ into equation (\ref{omegaexpr}) and simplifying gives 
\[\omega=-\frac{p \pm i\sqrt{-p^2 + q^2 + 1}}{i-q }.\]
Thus choosing $\sqrt{-p^2+q^2+1}$ to be the positive root is the correct choice.
\subsubsection{\large The extremal disks for $e^{i\theta}=-i$}
Set $e^{i\theta}=-i$. 
Repeating as was done in the previous section, one computes
\begin{align}\label{zfortheta-i}
z=-\frac{(y+1)p}{\sqrt{-p^2+q^2+1}}+iy, \hspace{5mm} |c|^2= -\frac{yp\sqrt{-p^2+q^2+1}}{p\sqrt{-p^2+q^2+1}+qy+q}. 
\end{align}
One may check that $|z|<1$, $p<|z|<\min\{ p/q,1\}$, and $0<t<1$ when $\text{Im}(z)=y<0$ and $p<|z|<p/q$. Using the expression given in (\ref{cdexpr}) gives
\[|cm_{\gamma}(c)|=\Big|\frac{p((yi-q)\sqrt{-p^2 + q^2 + 1}  - py - p)}{(p\sqrt{-p^2 + q^2 + 1} + qy + q)(q + i)}\Big|.\]
\noindent To compute $\omega$ and $v^2$, substitute in the expression for $z$ given in (\ref{zfortheta-i}) into equation (\ref{pexpr}):
\[\omega=\frac{p - i\sqrt{-p^2 + q^2 + 1}}{i+q }\text{  and  } v^2=\frac{-i(i+q )}{p - i\sqrt{-p^2 + q^2 + 1}}.\]
Substituting in $e^{i\theta}=-i$ into equation (\ref{omegaexpr}) and simplifying gives 
\[\omega=\frac{p \pm i\sqrt{-p^2 + q^2 + 1}}{i+q }.\]
Thus choosing $\sqrt{-p^2+q^2+1}$ to be the negative root is the correct choice.

\section{A Family of Hypersurfaces Spanning Region 2}\label{region2section}
The following proposition provides a way to find the value of the pluricomplex Green function for arbitrary $(z_1,z_2)$ in region 2 along the disks constructed in section \ref{region2(0,z)section}.
\begin{proposition}\label{FormulaProp}
Let $\Omega$ be hyperconvex and fix two poles $p,q\in\Omega$. Suppose that $g_\Omega(z,p,q)=l_{\Omega}(z,p,q)$. Let $\phi:\D\rightarrow \Omega$ be holomorphic and $\phi(0)=z$, $\phi(c)=p$, $\phi(d)=q$, and $l_{\Omega}(z,p,q)=\log|cd|$. Then for any $\eta\in \text{Im}(\phi(\D))$ $($say $\phi(\zeta)=\eta)$, 
\[l_{\Omega}(\eta,p,q)=\log|m_c(\zeta)|+\log|m_d(\zeta)|.\]
\end{proposition}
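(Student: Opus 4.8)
The plan is to prove the two inequalities $l_{\Omega}(\eta,p,q)\le \log|m_{c}(\zeta)|+\log|m_{d}(\zeta)|$ and $l_{\Omega}(\eta,p,q)\ge \log|m_{c}(\zeta)|+\log|m_{d}(\zeta)|$ separately, where throughout $m_{a}(\lambda)=(a-\lambda)/(1-\overline{a}\lambda)$. One may assume $p\neq q$ and $z,\eta\notin\{p,q\}$ (so that $c,d,\zeta$ are distinct and all different from $0$), since in the remaining degenerate cases both sides of the claimed identity are $-\infty$. For the upper bound I would simply reparametrize the given extremal disk: as $m_{\zeta}$ is an involution of $\D$ with $m_{\zeta}(0)=\zeta$, the disk $\psi:=\phi\circ m_{\zeta}$ satisfies $\psi(0)=\eta$, $\psi(m_{\zeta}(c))=p$ and $\psi(m_{\zeta}(d))=q$, so it is admissible for $L_{\Omega}(\eta,\{p,q\})$ and, using the symmetry $|m_{\zeta}(c)|=|m_{c}(\zeta)|$ of the pseudohyperbolic distance,
\[l_{\Omega}(\eta,p,q)\le L_{\Omega}(\eta,\{p,q\})\le \log|m_{\zeta}(c)|+\log|m_{\zeta}(d)|=\log|m_{c}(\zeta)|+\log|m_{d}(\zeta)|.\]

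For the lower bound, since $g_{\Omega}\le l_{\Omega}$ always holds (see (\ref{cgl ineq})), it suffices to prove $g_{\Omega}(\eta,p,q)=\log|m_{c}(\zeta)|+\log|m_{d}(\zeta)|$. I would pull $g_{\Omega}(\cdot,p,q)$ back along $\phi$: the function $u(\lambda):=g_{\Omega}(\phi(\lambda),p,q)$ is subharmonic on $\D$ and $\le 0$. Because $g_{\Omega}(\cdot,p,q)$ has a weight-one logarithmic pole at $p$ and $|\phi(\lambda)-\phi(c)|\le C|\lambda-c|$ near $c$ (and similarly at $d$), $u$ has logarithmic poles of weight at least one at $c$ and $d$, so $u$ is a competitor in the definition of the one-variable pluricomplex Green function $g_{\D}(\cdot,\{c,d\},(1,1))$, which equals $v(\lambda):=\log|m_{c}(\lambda)|+\log|m_{d}(\lambda)|$ because $m_{c}m_{d}$ is a Blaschke product. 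Hence $u\le v$ on $\D$. Evaluating at $\lambda=0$ and invoking the two hypotheses $g_{\Omega}(z,p,q)=l_{\Omega}(z,p,q)$ and $l_{\Omega}(z,p,q)=\log|cd|$ gives $u(0)=\log|cd|=v(0)$, so $u$ touches its majorant $v$ at the interior point $0$.

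The remaining, and crucial, step is a rigidity argument: $v-u$ is superharmonic on the connected set $\D\setminus\{c,d\}$, nonnegative there, not identically $+\infty$ (as $u(0)>-\infty$), and vanishes at $0\in\D\setminus\{c,d\}$, so the minimum principle for superharmonic functions forces $v-u\equiv 0$, that is, $u\equiv v$ on $\D\setminus\{c,d\}$. Since $\zeta\notin\{c,d\}$, this yields $g_{\Omega}(\eta,p,q)=u(\zeta)=v(\zeta)=\log|m_{c}(\zeta)|+\log|m_{d}(\zeta)|$, and combined with the upper bound and $g_{\Omega}\le l_{\Omega}$ it gives the proposition. I expect this last step — converting the pointwise equality $u(0)=v(0)$ into the identity $u\equiv v$ — to be the heart of the matter, and it is precisely where the extremality of $\phi$ and the coincidence $g_{\Omega}(z,p,q)=l_{\Omega}(z,p,q)$ are used; the hypothesis that these agree at $z$ is exactly what makes $u$ meet its majorant. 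The only genuinely routine point to verify is that the $\phi$-pullback of a weight-one logarithmic pole still satisfies the defining $O(1)$ estimate even when $\phi$ fails to be an immersion at $c$ or $d$, which follows at once from the Lipschitz bound $|\phi(\lambda)-\phi(c)|\le C|\lambda-c|$.
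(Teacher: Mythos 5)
Your proof is correct, and its skeleton is the same as the paper's: both arguments compare a pullback along the extremal disk with the two-pole Green function $\log|m_c|+\log|m_d|$ of $\D$ and then invoke a maximum/minimum-principle rigidity at the interior point $0$, where the extremality $l_\Omega(z,p,q)=\log|cd|$ forces the two functions to touch. The differences are worth noting, and on balance they favor your version. The paper pulls back a Lempert-type function $g(t)=\inf\{\log|cd|:f(0)=t,\,f(c)=p,\,f(d)=q\}$ and justifies subharmonicity of $g\circ\phi$ by asserting that $l_\Omega$ is plurisubharmonic, a delicate point in this generality; it also extends the difference subharmonically across the poles (citing Klimek) and leaves implicit the passage from the identity for $g$ along the disk back to the Lempert function at $\eta$. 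You instead pull back the pluricomplex Green function $g_\Omega$, whose plurisubharmonicity is immediate, so the only analytic input is the routine verification (which you flag) that a weight-one logarithmic pole survives composition with $\phi$ via the Lipschitz bound; you apply the minimum principle directly on $\D\setminus\{c,d\}$, avoiding the removable-singularity step; and you supply explicitly the bridge the paper glosses over, namely the reparametrization $\phi\circ m_\zeta$ giving $l_\Omega(\eta,p,q)\leq\log|m_c(\zeta)|+\log|m_d(\zeta)|$, which together with (\ref{cgl ineq}) converts your Green-function identity into the stated Lempert-function identity. Your argument also makes transparent exactly where the hypothesis $g_\Omega(z,p,q)=l_\Omega(z,p,q)$ is used (to get $u(0)=v(0)$). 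One small caveat: in your opening reduction, if $\eta\in\{p,q\}$ but $\zeta\notin\{c,d\}$ the right-hand side is not obviously $-\infty$; however, your own rigidity argument shows this configuration cannot occur under the hypotheses (it would give $u(\zeta)=-\infty=v(\zeta)$ finite), so the reduction is harmless once phrased that way.
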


\begin{proof}
Define $g(t)=\inf\{\log|cd|: f\in \text{Hol}(\D,\Omega), f(0)=t, f(c)=p, f(d)=q  \}$. 
Note that $g(\phi(t))$ is a subharmonic function  (because $l_\Omega$ is plurisubharmonic) from $\D$ to $[-\infty,0)$ and has logarithmic poles at $c$ and $d$. Thus we have that
\[g(\phi(\zeta))\leq \log|m_{c}(\zeta)|+\log|m_{d}(\zeta)|.\]
 Next, define
\[v(\zeta):=g(\phi(\zeta))-(\log|m_{c}(\zeta)|+\log|m_{d}(\zeta)|).\]
Note that $v$ is bounded above, and $g(\phi)-(\log|m_c|+\log|m_d|)$ is subharmonic on $\D\setminus\{c,d\}$. Thus $v$ extends to a subharmonic function on $\D$, see Theorem 2.7.1 \cite{MK91}. Next, note that $v(0)=0$. So by the maximum principle we have that $g(\phi(\zeta))=\log|m_{c}(\zeta)|+\log|m_{d}(\zeta)|$.
\end{proof}
\noindent For region 2 of the bidisk, the extremal analytic disks have 
degree two Blaschke products in both components. Thus these 
disks will intersect the $z_1$ and $z_2$ axis. Recall that $A(e^{i\theta})$ is the set of $|z|<\min\{p/q,1\}$ such that (\ref{fundamentalpoly}), $(\ref{zreq2})$, and $(\ref{zreq3}$) are satisfied for $e^{i\theta}$. By Lemma \ref{Alem}, each nonempty $A(e^{i\theta})$ will be a line segment in the $z$ plane. The extremal disk associated with each $z\in A(e^{i\theta})$ will be used to define a hypersurface, and these hypersurfaces span region 2 as shown in Proposition \ref{spanningProp}.

\subsection{A Few Preliminary Results.}
 First, note that the previous section constructs two disks that have identical image for each triple $(0,0)$, $(p,z)$, and $(0,m_{z}(q))$ in region 2. 
 These two disks are
 \[\phi_{z}^+(\lambda)=(\lambda m_\alpha(\lambda),\omega \lambda m_{\beta}(\lambda)) \hspace{5mm}\text{ and } \hspace{5mm}\phi_{z}^{-}(\lambda)=(\lambda m_{-\alpha}(\lambda),\omega \lambda m_{-\beta}(\lambda)).\] 
 Because these two disks have identical image, it does not matter which one is chosen. Next, we will prove the injectivity of the disks. Suppose that $(z_1,m_{z}(z_2))\in \phi_z(\D)$. Let $\zeta\in \D$ such that $\phi_z(\zeta)=(z_1,m_z(z_2))$. First, we need $\zeta m_{\alpha}(\zeta)=z_1$
or equivalently $\zeta\alpha+\overline{\alpha}\zeta z_1-z_1=\zeta^2$.
Second, we need $\omega\zeta m_{\beta}(\zeta)=m_{z}(z_2)$
or equivalently
$-\zeta\beta-\frac{m_{z}(z_2)}{\omega}\overline{\beta}\zeta+\frac{m_{z}(z_2)}{\omega}=-\zeta^2$.
Adding  these two equations together (and hence canceling the $\zeta^2$ on the right hand side) gives
\[\zeta\alpha+\overline{\alpha}\zeta z_1-z_1+-\zeta\beta-\frac{m_{z}(z_2)}{\omega}\overline{\beta}\zeta+\frac{m_{z}(z_2)}{\omega}=0.\]
Now solve for $\zeta$:
\begin{align}
    \zeta=\frac{z_1-m_{z}(z_2)/\omega}{\alpha-\beta+\overline{\alpha}z_1-\overline{\beta}m_{z}(z_2)/\omega}.\label{zetaexpr}\end{align}
   
\subsection{An Equation for $g_{\D^2}$ Along an Extremal Disk}
Recall that for $z\in \D$ such that $|z|<\min\{ p/q,1\}$, there is a corresponding extremal analytic disk $\phi_z:\D\rightarrow \D^2$
where $\phi_z(\lambda)=(\lambda m_{\alpha}(\lambda),\omega \lambda m_{\beta}(\lambda))$ with $\phi_z(c)=(p,z),\hspace{2mm} \phi_z(m_{\gamma}(c))=(0,m_{z}(q))$, and  
\[g_{\D^2}((0,z),(p,0),(0,q))=\log|cm_{\gamma}(c)|.\]
Given the data $p,q$ and $z$, there are parameters $\alpha,\beta,t,c=vr,$ and $\omega$ associated with the extremal disks. Proposition \ref{ParametersProp} expresses $|c|,t,|\alpha|,|\beta|$ in terms of the data and $e^{i\theta}=\omega v^2$. However, these expressions are long. For the purposes of this section, we will use shorter expressions for $c,\alpha$, and $\beta$ as explained below. Solving for $\overline{p}$ in equation (\ref{systemcon4}) gives 
\[\overline{p}=-\frac{(-\overline{z}\omega^2v^4 + \overline{z}\omega^2pv^2 - \omega p + z)}{v^2(\omega v^2 - z)}.\]
While we are assuming $p$ is real, using (\ref{systemcon4}) as is leads to simpler computations.
Plugging in this expression for $\overline{p}$ into (\ref{systemcon5}) and simplifying gives
\begin{align}\label{rexpromegav}
r=\sqrt{\frac{p\omega(\overline{z}\omega^2pv^2 - \overline{z}\omega qv^2 - \omega p + z)}{\omega^2pv^2 - \omega qv^2 - \omega pq + qz}}.\end{align}
Writing $c=rv$, and substituting the value for $r$ given in (\ref{rexpromegav}) into $\alpha$ and $\beta$ as defined in (\ref{systemcon2}) and (\ref{systemcon3}) gives
\begin{align}\label{alphaexprevomega}
    \alpha=\frac{(z-q)v}{(\overline{z}\omega^2pv^2 - \overline{z}\omega qv^2 - \omega p + z)}\sqrt{\frac{(\overline{z}\omega^2pv^2 - \overline{z}\omega qv^2 - \omega p + z)p\omega}{\omega^2pv^2 - \omega qv^2 - \omega pq + qz}}
\end{align}
and 
\begin{align}\label{betaexprevomega}
    \beta=\frac{-S}{(|z|^2 - 1)\omega^2p(\overline{z}\omega^2pv^2 - \overline{z}\omega qv^2 - \omega p + z)v}\sqrt{\frac{(\overline{z}\omega^2pv^2 - \overline{z}\omega qv^2 - \omega p + z)p\omega}{\omega^2pv^2 - \omega qv^2 - \omega pq + qz}},
\end{align}
where $S$ is the following polynomial
\begin{align*}
    &S=\overline{z}\omega^4p^2v^4 - \overline{z}\omega^3pqv^4 - |z|^2\omega^3pv^4 - |z|^2\omega^3p^2v^2 + |z|^2\omega^2qv^4 + 2|z|^2\omega^2pqv^2 - |z|^2\omega qv^2z\\& - \omega^3p^2v^2 + 2\omega^2pv^2z + \omega^2p^2z - \omega qv^2z - \omega pqz - \omega pz^2 + qz^2.
\end{align*}
\begin{proposition}
    Let $z\in \D$ and $|z|<p/q$. Let $\phi_z:\D\rightarrow \D^2$ be the associated extremal disk for the triple $(0,0)$, $(p,z)$, and $(0,m_z(q))$. Then $(z_1,m_z(z_2))\in  \phi_z(\D)$ if
    \begin{equation}\label{diskcondition}
\begin{split}
&
\Big(z_1-\frac{m_{z}(z_2)}{\omega}\Big)^2+\frac{m_{z}(z_2)}{\omega}(\alpha+z_1\overline{\alpha})^2+z_1\Big(\beta+\frac{m_{z}(z_2)}{\omega}\overline{\beta}\Big)^2\\&-(\alpha+z_1\overline{\alpha})\Big(\beta+\frac{m_{z}(z_2)}{\omega}\overline{\beta}\Big)\Big(z_1+\frac{m_{z}(z_2)}{\omega}\Big)=0.
\end{split}
\end{equation} 
where $\alpha$ and $\beta$ are parameters defined in terms of the data (given in (\ref{alphaexprevomega}) and (\ref{betaexprevomega})) and the two unimodular constants $\omega$ and $v$.
\end{proposition}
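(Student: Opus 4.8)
The plan is to unwind the statement ``$(z_1,m_z(z_2))\in\phi_z(\D)$'' into a pair of scalar equations in a single variable $\zeta$, reduce that pair to one consistency condition, and recognize that condition as (\ref{diskcondition}).

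First I would observe that $(z_1,m_z(z_2))\in\phi_z(\D)$ exactly when there is a $\zeta\in\D$ with $\zeta m_\alpha(\zeta)=z_1$ and $\omega\zeta m_\beta(\zeta)=m_z(z_2)$. Put $w:=m_z(z_2)/\omega$. Clearing the denominator in each Blaschke factor (as was already done just above (\ref{zetaexpr})) turns these into the two quadratics $\zeta^2-(\alpha+z_1\overline\alpha)\zeta+z_1=0$ and $\zeta^2-(\beta+w\overline\beta)\zeta+w=0$. Subtracting one from the other cancels $\zeta^2$ and leaves the linear relation $[(\alpha+z_1\overline\alpha)-(\beta+w\overline\beta)]\,\zeta=z_1-w$, whose solution is precisely the $\zeta$ displayed in (\ref{zetaexpr}). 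Thus, away from the degenerate locus where the bracketed coefficient vanishes, the point lies on the disk iff this particular $\zeta$ satisfies one (equivalently both, since it already satisfies the difference of the two) of the quadratics.

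The core step is to substitute this $\zeta$ into the first quadratic. Abbreviating $A:=\alpha+z_1\overline\alpha$ and $B:=\beta+w\overline\beta$, so $\zeta=(z_1-w)/(A-B)$, and multiplying $\zeta^2-A\zeta+z_1=0$ through by $(A-B)^2$, I obtain $(z_1-w)^2-A(z_1-w)(A-B)+z_1(A-B)^2=0$. Expanding and collecting terms, and using $z_1-(z_1-w)=w$, this rearranges exactly to $(z_1-w)^2+wA^2+z_1B^2-AB(z_1+w)=0$; restoring $A=\alpha+z_1\overline\alpha$, $B=\beta+w\overline\beta$, and $w=m_z(z_2)/\omega$ gives (\ref{diskcondition}). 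Running the same computation from the second quadratic yields the same identity by the symmetry $z_1\leftrightarrow w$, $A\leftrightarrow B$. Conversely, if (\ref{diskcondition}) holds then $\zeta$ as in (\ref{zetaexpr}) solves the first quadratic and, by construction, the difference of the two, hence also the second; and since any root of $\zeta^2-(\alpha+z_1\overline\alpha)\zeta+z_1=0$ is a preimage of $z_1\in\D$ under the degree-two Blaschke product $\lambda\mapsto\lambda m_\alpha(\lambda)$, it automatically lies in $\D$. Therefore $(z_1,m_z(z_2))=\phi_z(\zeta)\in\phi_z(\D)$.

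The only bookkeeping nuisance is the degenerate case $A=B$: there the linear relation forces $z_1=w$, the two quadratics coincide, and one checks that (\ref{diskcondition}) reduces to $0=0$, so the conclusion holds trivially; I would dispose of this in one sentence before running the generic computation. The sole genuine calculation is the short polynomial rearrangement in the previous paragraph, which is elementary; and if one wants the condition fully explicit in the data, one substitutes the formulas (\ref{alphaexprevomega}) and (\ref{betaexprevomega}) for $\alpha$ and $\beta$, a purely mechanical step best left to a computer algebra system.
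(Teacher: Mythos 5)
Your proof is correct and takes essentially the same route as the paper: clear the denominators to obtain the two quadratics $\zeta^2-(\alpha+z_1\overline{\alpha})\zeta+z_1=0$ and $\zeta^2-(\beta+w\overline{\beta})\zeta+w=0$ with $w=m_z(z_2)/\omega$, difference them to get the $\zeta$ of (\ref{zetaexpr}), substitute into $\zeta m_{\alpha}(\zeta)=z_1$, and rearrange to (\ref{diskcondition}). The only difference is that you also verify the implication the proposition literally asserts (condition implies membership), including the observation that any root of the first quadratic automatically lies in $\D$ and the degenerate case $\alpha+z_1\overline{\alpha}=\beta+w\overline{\beta}$, details the paper's one-line proof leaves implicit.
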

\begin{proof}
When $(z_1,m_z(z_2))\in\phi(\D)$, there exists a $\zeta\in \D$ such that $\phi(\zeta)=(z_1,m_{z}(z_2))$. In equation (\ref{zetaexpr}), we found an expression for $\zeta$. Plugging in this expression into $z_1=\zeta m_{\alpha}(\zeta)$ and simplifying gives (\ref{diskcondition}).
\end{proof}

 Next, we write (\ref{diskcondition}) in terms of the data, $\omega$ and $v$.  Take the expressions for $\alpha$ and $\beta$ given in (\ref{alphaexprevomega}) and (\ref{betaexprevomega}) and substitute into equation (\ref{diskcondition}); solve for $q$:
\begin{equation}\label{qdiskcondition}\begin{split}
       &q=A/B,\end{split}
    \end{equation}

    where
    \begin{align*}
        &A= p\omega (\overline{z}^2 \omega^4v^6z_1^2z_2 + \overline{z}\omega^4pv^6z_1 - 
        \overline{z}\omega^4v^6z_1^2 - 
        \overline{z}\omega^3v^6z_1z_2 - 2\overline{z}\omega^3pv^4z_1z_2 - 
        \overline{z}\omega^3v^4z_1^2z_2 \\&+ 2\omega^2v^4z_1z_2|z|^2 - \overline{z}\omega^2v^4z_1z_2^2 
        + \overline{z}\omega^2pv^2z_1z_2^2 - \omega^3pv^4z_1 + \omega^3v^4z_1^2 + 2\omega^2v^4z_1z_2 + 
        2\omega^2pv^2z_1z_2 \\&- \omega v^4zz_2  + \omega v^4z_2^2- 3\omega v^2zz_1z_2 - \omega pz_1z_2^2 + v^2z^2z_2  - v^2zz_2^2+ zz_1z_2^2),
        \\
        \\&B=\overline z^2\omega^4v^6z_1^2z_2  - \overline{z}^2\omega^3v^4z_1^2z_2^2 + \overline{z}\omega^4pv^6z_1 - \overline{z}\omega^4v^6z_1^2 + \overline{z}^2\omega^3pv^2z_1^2z_2^2 - \overline{z}\omega^3v^6z_1z_2\\& - 3\overline{z}\omega^3pv^4z_1z_2 + \overline{z}\omega^3v^4z_1^2z_2 - \overline{z}\omega^3pv^2z_1^2z_2  + |z|^2\omega^2v^4z_1z_2 + \overline{z}\omega^2v^4z_1z_2^2 + 2|z|^2\omega^2pv^2z_1z_2 \\&- |z|^2\omega^2v^2z_1^2z_2 - \overline{z}\omega^2pz_1^2z_2^2  - |z|^2\omega v^2z_1z_2^2 - \omega^2pv^4z + \omega^2pv^4z_2 + \omega^2v^4zz_1 + |z|^2\omega z_1^2z_2^2 \\&- \omega^2pv^2zz_1 + 2\omega^2pv^2z_1z_2 + \omega^2v^2zz_1^2 + \omega^2pz_1^2z_2 + \omega pv^2z^2 - \omega pv^2zz_2 - \omega v^2z^2z_1 - \omega v^2zz_1z_2\\& - \omega pzz_1z_2 - \omega zz_1^2z_2 + z^2z_1z_2).
    \end{align*}
Such a computation by hand is very strenuous. Rather using the algebraic manipulation capabilities of Maple allows for such computations to be done accurately. While expression (\ref{qdiskcondition}) is very long, it will be needed (in the next section) to find the formula for $g_{\D^2}$ along the image of $\phi_z$ when eliminating the parameter $z$. \par
\begin{proposition}
    Suppose that $(z_1,m_z(z_2)) \in \phi_z(\D)$ (satisfies (\ref{diskcondition})) where $\phi_z$ is the extremal disk for the triple $(0,0)$, $(p,z)$, and $(0,q)$ defined earlier. Then 
    \begin{align}\label{cdformulawithz}
    g_{\D^2}((z_1,z_2),(p,0),(0,q))=\log|P/Q|
    \end{align}
    where
    \begin{align*}
        &P=p(\overline{z}\omega v^2 - 1)(\overline{z}\omega^2v^2z_1z_2 + \omega^2pv^2 - \omega^2v^2z_1 - \omega v^2 z_2 - \omega pz_2 + zz_2)(\overline{z}\omega^2 v^2z_1z_2 - \overline{z} \omega qz_1z_2\\& - \omega^2v^2z_1 + \omega qv^2 - \omega v^2z_2 + \omega qz_1 - qz + zz_2)
    \end{align*}
    and
    \begin{align*}
        &Q=(\omega v^2 - z_2)(-\overline{z}^2\omega^3v^4z_1z_2 + \overline{z}^2\omega^3pv^2z_1z_2 - \overline{z}\omega^3pv^4 + \overline{z}\omega^3v^4z_1 - \overline{z}\omega^3pv^2z_1 + \overline{z}\omega^2v^4z_2\\& + \overline{z}\omega^2pv^2z - \overline{z}\omega^2pz_1z_2 - |z|^2\omega v^2z_2 + |z|^2\omega z_1z_2 + \omega^2pv^2 + \omega^2pz_1 - \omega v^2z - \omega pz - \omega zz_1 + z^2)\\&\cdot (\omega^2pv^2 - \omega qv^2 - \omega pq + qz).
    \end{align*}
\end{proposition}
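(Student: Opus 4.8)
The plan is to combine the automorphism invariance of $g_{\D^2}$ recorded in the introduction with Proposition \ref{FormulaProp}, applied to the extremal disk $\phi_z$. First I would observe that $(w_1,w_2)\mapsto(w_1,m_z(w_2))$ is an automorphism of $\D^2$ (an involution in the second coordinate) carrying the pole $(p,0)$ to $(p,z)$, the pole $(0,q)$ to $(0,m_z(q))$, and the evaluation point $(z_1,z_2)$ to $(z_1,m_z(z_2))$. Hence
\[g_{\D^2}((z_1,z_2),(p,0),(0,q))=g_{\D^2}\big((z_1,m_z(z_2)),(p,z),(0,m_z(q))\big),\]
and since $g_{\D^2}=l_{\D^2}$ on the bidisk \cite{KTZ17}, the right side equals $l_{\D^2}\big((z_1,m_z(z_2)),(p,z),(0,m_z(q))\big)$. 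This is precisely the quantity to which Proposition \ref{FormulaProp} applies, with $\Omega=\D^2$, poles $(p,z)$ and $(0,m_z(q))$, and $\phi=\phi_z$: recall $\phi_z(0)=(0,0)$, $\phi_z(c)=(p,z)$, $\phi_z(m_\gamma(c))=(0,m_z(q))$, and $l_{\D^2}((0,0),(p,z),(0,m_z(q)))=\log|c\,m_\gamma(c)|$, so $\phi_z$ is genuinely extremal.

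Next I would use the hypothesis $(z_1,m_z(z_2))\in\phi_z(\D)$ together with the injectivity of $\phi_z$ established in the preliminary subsection: the unique $\zeta\in\D$ with $\phi_z(\zeta)=(z_1,m_z(z_2))$ is given by equation (\ref{zetaexpr}),
\[\zeta=\frac{z_1-m_z(z_2)/\omega}{\alpha-\beta+\overline{\alpha}z_1-\overline{\beta}m_z(z_2)/\omega}.\]
Proposition \ref{FormulaProp} then yields, using $m_\gamma(c)=\alpha$,
\[g_{\D^2}((z_1,z_2),(p,0),(0,q))=\log\big|m_c(\zeta)\big|+\log\big|m_\alpha(\zeta)\big|=\log\big|m_c(\zeta)\,m_\alpha(\zeta)\big|.\]
Everything conceptual is now in place; it remains to identify $m_c(\zeta)\,m_\alpha(\zeta)$ with the stated $P/Q$.

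The final step is the explicit elimination. I would substitute $c=rv$ with $r$ from (\ref{rexpromegav}), $\alpha$ from (\ref{alphaexprevomega}), $\beta$ from (\ref{betaexprevomega}), and $m_z(z_2)=(z-z_2)/(1-\overline{z}z_2)$ into $\zeta$ and then into $m_c(\zeta)m_\alpha(\zeta)=\frac{rv-\zeta}{1-r\overline{v}\zeta}\cdot\frac{\alpha-\zeta}{1-\overline{\alpha}\zeta}$, and clear denominators. The structural observation that makes this tractable is that $r^2$, $\alpha/r$ and $\beta/r$ are all \emph{rational} in $p,q,z,\overline{z},\omega,v$ (the radical in $\alpha$ and $\beta$ is exactly the radical defining $r$): consequently $\zeta$ is $r^{-1}$ times a rational expression, each of $m_c(\zeta)$ and $m_\alpha(\zeta)$ is (rational in $r^2$) over ($r$ times rational), and so the product $m_c(\zeta)m_\alpha(\zeta)$ is rational in $r^2$; inserting the rational value of $r^2$ makes it a rational function of the data and of $\omega,v$. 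Carrying out this simplification with a computer algebra system — as the authors do for the companion identities (\ref{diskcondition}) and (\ref{qdiskcondition}) — should collapse the expression to $\log|P/Q|$ with the displayed $P$ and $Q$. The main obstacle is thus purely computational: confirming that all radicals cancel and that the resulting large polynomials factor into the compact forms $P$ and $Q$ claimed; no further analytic input beyond Proposition \ref{FormulaProp}, the equality $g_{\D^2}=l_{\D^2}$, and the preimage formula (\ref{zetaexpr}) is needed.
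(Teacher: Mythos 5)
Your proposal is correct and is essentially the paper's own argument: transport the configuration by the automorphism $(w_1,w_2)\mapsto(w_1,m_z(w_2))$, apply Proposition \ref{FormulaProp} to the extremal disk $\phi_z$ with $\zeta$ given by (\ref{zetaexpr}), and then substitute the expressions (\ref{rexpromegav}), (\ref{alphaexprevomega}), (\ref{betaexprevomega}) into $m_c(\zeta)m_\alpha(\zeta)$ and simplify by computer algebra to obtain $\log|P/Q|$. Your extra remarks---explicitly invoking $g_{\D^2}=l_{\D^2}$ from \cite{KTZ17} and explaining why the radicals cancel because $r^2$, $\alpha/r$, $\beta/r$ are rational in the data and $\omega,v$---only make explicit what the paper leaves implicit.
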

\begin{proof}
 
We find the formula for $g_{\D^2}$ along $\phi_z(\D)$. Proposition \ref{FormulaProp} tells us that
\[g_{\D^2}(\phi(\zeta),(p,0),(q,0))=\log|m_c(\zeta)|+\log|m_d(\zeta)|,\]
where $c=rv$ is given by (\ref{rexpromegav}), $d=\alpha$ given by (\ref{alphaexprevomega}), and $\zeta$ is given by (\ref{zetaexpr}). Note that (\ref{zetaexpr}) gives $\zeta$ in terms of $\alpha$ and $\beta$, but $\alpha$ and $\beta$ are given in terms of the data, $\omega$, and $v$ in (\ref{alphaexprevomega}) and (\ref{betaexprevomega}). Thus we may write $\zeta$ in terms of the data, $\omega$, and $v$. Let $\phi(\zeta)=(z_1,m_{z}(z))$. Plugging in these expressions for $c,\alpha,$ and $\zeta$ into 
\[g_{\D^2}((z_1,z_2),(p,0),(0,q))=g_{\D^2}((z_1,m_z(z_2)),(p,z),(0,m_z(q)))=\log|m_c(\zeta)|+\log|m_d(\zeta)|,\]
and simplifying (using Maple's algebraic manipulation abilities) gives the expression stated in (\ref{cdformulawithz})
   \end{proof}
    \noindent This formula is not ideal as $z$ appears in the formula. The next section will eliminate the parameter $z$.
    \subsection{Eliminating the parameter $z$}\label{Elimz}
    The formula derived in the previous section along the disk $\phi_z$ includes the parameter $z$. We want to find a formula independent of this parameter. Note that equation (\ref{qdiskcondition}) is quadratic in $z$ and $\overline{z}$. Hence trying to solve for $z$ in this equation is not desirable. Rather, we will make a substitution so that the numerator and denominator of (\ref{qdiskcondition}) factors and one power of $z$ cancels. To do this, solve for $\overline{z}$ in equation (\ref{systemcon4}):
    \[\overline{z} =\frac{ -(p\omega v^4 - pv^2z - \omega p + z)}{\omega^2v^2(-v^2 + p)}.\]
        Substitute this expression for $\overline{z}$ into (\ref{qdiskcondition}) and factor. Both the numerator and the denominator will have a factor of $e^{i\theta}-z$. Hence, after cancellation, we get an expression that is linear in $z$ in both the numerator and denominator. Then solving for $z$ from this equation gives:
\begin{align}\label{zexpr}
    z = A/B
\end{align}
where
\begin{align*}
    &A=\omega p(-\omega^2p^2v^8z_1^2z_2 - \omega^3p^2v^8z_1 + \omega^3pv^8z_1^2 + \omega pqv^8z_1^2z_2 + \omega^3p^3v^6z_1 - \omega^3p^2v^6z_1^2 + \omega^2pqv^8z_1\\&+ \omega^2pv^8z_1z_2 - \omega^2qv^8z_1^2 - \omega^2p^2qv^6z_1  + \omega^2p^2v^6z_1z_2 + \omega^2pqv^6z_1^2+ \omega^2pv^6z_1^2z_2 - \omega qv^8z_1z_2 \\&- pqv^6z_1^2z_2^2 + \omega^3pv^6z_1 - \omega^3v^6z_1^2 - 2\omega^2p^3v^4z_1z_2 + \omega^2p^2v^4z_1^2z_2 - 2\omega pqv^6z_1z_2 + \omega pv^6z_1z_2^2  \\&+ \omega qv^6z_1^2z_2 + p^2qv^4z_1^2z_2^2 - \omega^3p^2v^4z_1 + \omega^3pv^4z_1^2 - 2\omega^2v^6z_1z_2+ 3\omega p^2qv^4z_1z_2 - 2\omega p^2v^4z_1z_2^2 \\&- 4\omega pqv^4z_1^2z_2 + qv^6z_1z_2^2 - \omega^2pqv^4z_1 + \omega^2pv^4z_1z_2 + \omega^2qv^4z_1^2 + \omega p^3v^2z_1z_2^2 + \omega p^2qv^2z_1^2z_2 \\&+ \omega qv^6z_2 - \omega v^6z_2^2  - pqv^4z_1z_2^2 + \omega^2p^2qv^2z_1 + \omega^2p^2v^2z_1z_2- \omega^2pqv^2z_1^2 - \omega^2pv^2z_1^2z_2 - 2\omega pqv^4z_2 \\&+ 2\omega pv^4z_2^2 + 3\omega qv^4z_1z_2 + pqv^2z_1^2z_2^2 + \omega p^2qv^2z_2 - \omega p^2v^2z_2^2 - 2\omega pqv^2z_1z_2 - p^2qz_1^2z_2^2 \\&- \omega p^2qz_1z_2 + \omega pqz_1^2z_2 - qv^2z_1z_2^2 + pqz_1z_2^2),\\ \\
&B=-\omega^2 p^3v^6z_1^2z_2  + \omega p^2qv^6z_1^2z_2 + 2\omega^2p^2v^6z_1z_2 - 2\omega^2p^3v^4z_1z_2  + 2\omega^2p^2v^4z_1^2z_2 - \omega pqv^6z_1z_2 \\&- p^2qv^4z_1^2z_2^2+ \omega^2pqv^6  - \omega^2pv^6z_2 - \omega^2qv^6z_1 - \omega p^2qv^4z_1z_2 - \omega pqv^4z_1^2z_2 + p^3qv^2z_1^2z_2^2  \\&- 2\omega^2p^2qv^4 + 2\omega^2p^2v^4z_2+ 2\omega^2pqv^4z_1 - 2\omega^2pv^4z_1z_2 + 2\omega p^3qv^2z_1z_2  - \omega p^2qv^2z_1^2z_2 + pqv^4z_1z_2^2\\& + \omega^2p^3qv^2 - \omega^2p^3v^2z_2 - \omega^2p^2qv^2z_1 + 2\omega^2p^2v^2z_1z_2 - \omega^2pv^2z_1^2z_2 + 2\omega qv^4z_1z_2 - p^2qv^2z_1z_2^2 \\&+ pqv^2z_1^2z_2^2 - \omega pqv^2z_1z_2 - p^2qz_1^2z_2^2 - \omega p^2qz_1z_2 +\omega pqz_1^2z_2 - qv^2z_1z_2^2 + pqz_1z_2^2.
\end{align*}
Now, we also have an expression for $\overline{z}$ by taking the conjugate of the expression (\ref{zexpr}). Substituting in these expressions for $z$ and $\overline{z}$ into (\ref{cdformulawithz}) and simplifying gives \footnotesize
    \begin{equation*}
\begin{split}
\\&g_{\D^2}((z_1,z_2),(p,0),(0,q))=\\&\log\Big|\frac{-\omega v^2 p^2z_1z_2 - \omega v^2pq + \omega v^2 pz_2 +\omega v^2qz_1 + p^2qz_1z_2 + \omega( p^2q -  p^2z_2 -  pqz_1 + p z_1 z_2) - qz_1z_2}{(\omega v^2)^2( p^2 z_1  -  p) + \omega( \omega v^2 p^2 - \omega v^2 p z_1- p^2q +  pqz_1) + v^2(\omega v^2 q  - \omega v^2 p q z_1+ p qz_1z_2  - qz_2)+pqz_2(1- pz_1)  }\Big|.
\end{split}
\end{equation*}
\normalsize
Note that the formula no longer depends on the parameter $z$. Rather, the formula depends on the two unimodular constants $\omega$ and $v$. Recall that $\omega v^2=e^{i\theta}$.  So after multiplying the denominator by $\omega$ (recall that $|\omega|=1$), the formula may be rewritten as follows:
\footnotesize
\begin{equation*}
\begin{split}
\\&g_{\D^2}((z_1,z_2),(p,0),(0,q))=\\&\log\Big|\frac{-e^{i\theta} p^2z_1z_2 -e^{i\theta}pq + e^{i\theta}pz_2 +e^{i\theta}qz_1 + p^2qz_1z_2 + \omega( p^2q -  p^2z_2 -  pqz_1 + p z_1 z_2) - qz_1z_2}{\omega e^{2i\theta}( p^2 z_1  -  p) + \omega^2( e^{i\theta} p^2 - e^{i\theta} p z_1- p^2q +  pqz_1) + e^{i\theta}(e^{i\theta} q  - e^{i\theta}p q z_1+ p qz_1z_2  - qz_2)+\omega pqz_2(1- pz_1)  }\Big|.
\end{split}
\end{equation*}
\normalsize
    \subsection{A Family of Hypersurfaces Spanning Region Two}\label{hypersurfacessection}
   
    As before, assume the poles are fixed at $(p,0)$, $(0,q)$ with $p,q>0$. Recall that for every $z\in \D$ in region two, that is $|z|<p/q$, there is a unique $e^{i\theta}$ (so that $z\in A(e^{i\theta}))$ and a corresponding analytic disk
\begin{align*}
    \phi_z(\lambda)=(\lambda m_{\alpha}(\lambda),\omega\lambda m_{\beta}(\lambda))
    \end{align*}
    with $\phi(0)=(0,0)$, $\phi_z(c)=(p,z)$, and $\phi_z(m_{\gamma }(c))=(0,m_{z}(q))$, where $\gamma=t\alpha+(1-t)\beta$, and the parameters, $\alpha=\alpha(z),\beta=\beta(z),c=c(z),\omega=\omega(z)$, and $t=t(z)$ are expressions in terms of the data and $e^{i\theta}$ as computed in Proposition \ref{ParametersProp}.
    For each $(0,z)$ in region 2, define the automorphism
    \begin{align*}
        G_z(\lambda_1,\lambda_2):=(\lambda_1,m_{z}(\lambda_2)).
    \end{align*}
 Next, for each nonempty $A(e^{i\theta}$), define \begin{align*}
S(e^{i\theta}) :=\bigcup_{z\in A(e^{i\theta})}G_z\circ \phi_z(\D).
\end{align*} Then each hypersurface $S(e^{i\theta})$ will contain the set $\{(0,z):z\in A(e^{i\theta})\}$ and the union of these hypersurfaces will span region 2 as shown in the proposition below.
    \begin{proposition}\label{spanningProp}
        For every $(z_1,z_2)$ in region 2, there is a $e^{i\theta}\in \mathbb T$ and a corresponding $z$ in region 2 that satisfies (\ref{zreq1}), (\ref{zreq2}), (\ref{zreq3}), and (\ref{qdiskcondition}), and $(z_1,z_2)\in S(e^{i\theta})$. Thus region two is the union of the hypersurfaces $S(e^{i\theta})$.
    \end{proposition}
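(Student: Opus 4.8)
The plan is to exhibit, for each $(z_1,z_2)$ of region 2, a point $(0,z)$ of region 2 through which passes the very same extremal analytic disk; since $S(e^{i\theta})=\bigcup_{z\in A(e^{i\theta})}G_z\circ\phi_z(\D)$ and conditions (\ref{zreq1})--(\ref{zreq3}) are exactly the statement $z\in A(e^{i\theta})$ while (\ref{qdiskcondition}) is exactly $(z_1,m_z(z_2))\in\phi_z(\D)$, producing such a $z$ yields everything at once. Because $g_{\D^2}=l_{\D^2}$ on the bidisk and $(z_1,z_2)$ is in region 2, the description of Kosi\'nski, Thomas and Zwonek applies: there is a holomorphic $\psi:\D\to\D^2$ with $\psi(0)=(z_1,z_2)$, $\psi(c_0)=(p,0)$, $\psi(d_0)=(0,q)$, $\log|c_0d_0|=g_{\D^2}((z_1,z_2),(p,0),(0,q))$, and with $T\circ\psi$ of the form $\lambda\mapsto(\lambda m_{\alpha}(\lambda),\omega\lambda m_{\beta}(\lambda))$, so both components of $\psi$ are inner of degree two. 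In particular $\psi_1$ vanishes at $d_0$ and at exactly one further point $\mu\in\D$; put $(0,z):=\psi(\mu)$, the second point in which $\psi(\D)$ meets the axis $\{z_1=0\}$ besides the pole $(0,q)$. (Symmetrically $\psi(\D)$ meets $\{z_2=0\}$ in $(p,0)$ and one further point $(w,0)$, and the permutation of coordinates allowed in the definition of the regions lets us work with whichever of the two points is a genuine second intersection.)

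Granting that $(0,z)$ lies in region 2, the argument concludes formally. Applying Proposition \ref{FormulaProp} to the $g$-extremal disk $\psi$ at $(0,z)=\psi(\mu)$ gives $l_{\D^2}((0,z),(p,0),(0,q))=\log|m_{c_0}(\mu)|+\log|m_{d_0}(\mu)|$, whence $G_z\circ\psi\circ m_{\mu}$ is an extremal disk for the triple $(0,0),(p,z),(0,m_z(q))$ studied in Section \ref{region2(0,z)section}. Since the map $\Phi$ of Kosi\'nski, Thomas and Zwonek is $2$-to-$1$ onto region 2 and the two preimages $(\alpha,\beta,c,\omega,t)$ and $(-\alpha,-\beta,-c,\omega,t)$ of any point determine the same disk image, the extremal image attached to $(0,z)$ is unique and hence equals $\phi_z(\D)$; therefore $\psi(\D)=G_z\circ\phi_z(\D)$ and $(z_1,z_2)=\psi(0)\in G_z\circ\phi_z(\D)$. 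This is precisely $(z_1,m_z(z_2))\in\phi_z(\D)$, i.e.\ (\ref{qdiskcondition}); and, $(0,z)$ being in region 2, $z$ lies in $A(e^{i\theta})$ for its unique associated $e^{i\theta}$, so (\ref{zreq1})--(\ref{zreq3}) hold and $(z_1,z_2)\in S(e^{i\theta})$. The reverse containment $S(e^{i\theta})\subseteq$ region 2 (modulo the two poles, which lie on every $S(e^{i\theta})$) is the easy direction: for $\eta\in G_z\circ\phi_z(\D)$ not a pole, Proposition \ref{FormulaProp} exhibits an extremal disk for $\eta$ that is genuinely of degree two in each component, which by the trichotomy of Kosi\'nski, Thomas and Zwonek places $\eta$ in region 2.

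The substantive point, and the main obstacle, is the claim that the second axis intersection $(0,z)$ (or its partner $(w,0)$) lies in region 2 rather than in region 1 or region 3. For region 3 I would note that $(0,z)\in$ Region 3 forces a codimension-one coincidence such as $|z|=p$ or $z=q$ (the latter being the degenerate possibility $\mu=d_0$, already avoided by switching to the other axis) and remove it by a small perturbation of $(z_1,z_2)$ within region 2, using continuity of $g_{\D^2}$ and of the construction. To rule out $(0,z)\in$ Region 1 one observes that in region 1 one coordinate of every extremal disk is a single Blaschke factor and $l_{\D^2}((0,z),(p,0),(0,q))=\log|zm_z(q)|$ with $|z|\ge\min\{p/q,1\}$, whereas the disk $\psi\circ m_{\mu}$ realizing the same value is genuinely of degree two in both components (non-degenerate at $(z_1,z_2)\in$ region 2, and its degrees are constant along the disk), a contradiction. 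Alternatively one can bypass $\psi$ entirely: the second preimage of $z_1$ under $\lambda m_{\alpha}(\lambda)$ equals $z_1/d_0$, so $z$ is the explicit rational function of $z_1,z_2,\omega,v$ recorded in (\ref{zexpr}) with $\omega,v$ as in Proposition \ref{omegavprop}, and substituting this into the fundamental polynomial (\ref{fundamentalpoly}) leaves a single equation for $e^{i\theta}\in\mathbb{T}$ whose solvability, together with the bound $|z|<p/q$ at a solution, follows from a sign-change argument on the circle modeled on the proof that (\ref{realpoly2}) always vanishes for some $\theta$. I expect this verification — that the axis intersection genuinely lands in region 2 — to be the crux of the whole proof.
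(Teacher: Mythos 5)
Your overall strategy is the same as the paper's: take the extremal disk for $(z_1,z_2)$ (of the form (\ref{deg2disk}) after moving $(z_1,z_2)$ to the origin), locate the second point where its image meets the axis $\{z_1=0\}$, recenter the disk there by precomposing with a disk automorphism, use Proposition \ref{FormulaProp} to see that the recentred disk is still extremal, and identify it with one of the disks $\phi_z$ of Section \ref{generalComp}; in the paper the second axis point is written explicitly as $z=B_2(z_1/d)$, exactly your $\psi(\mu)$.

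The genuine gap is at the identification step, precisely where you predict it. Your argument that ``the extremal image attached to $(0,z)$ is unique and hence equals $\phi_z(\D)$'' presupposes that $(0,z)$ lies in region 2 --- otherwise the disks of Section \ref{generalComp} are not defined for this $z$ and the Kosi\'nski--Thomas--Zwonek classification of extremals does not apply --- and your two proposed repairs do not close this. The perturbation argument for region 3 only treats nearby points; returning to the original $(z_1,z_2)$ would need a limiting argument showing membership in some $S(e^{i\theta})$ survives the limit, which you do not give. The degree-counting argument for region 1 needs a structure/uniqueness statement for region-1 extremal disks (``one coordinate of every extremal disk is a single Blaschke factor'') that is neither proved in the paper nor obvious: the region-1 extremals $\lambda\mapsto(m_{z_1}(\lambda),f(m_{z_1}(\lambda)))$ may well have a degree-two second component, and nothing a priori excludes another extremal of degree two in both components computing the region-1 value. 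The paper sidesteps the whole question: after recentring it composes with $G_z$, uses the free unimodular constant $s$ to make the first component monic, and invokes the proof of Lemma 3 of \cite{KTZ17}, which shows that an extremal disk with second-degree Blaschke products in both components and monic first component must be of the form (\ref{deg2disk}). Hence conditions (\ref{blaschkecon1})--(\ref{blaschkecon5}) hold for the triple $(0,0),(p,z),(0,m_z(q))$, the recentred disk is literally one of the disks of Section \ref{generalComp}, and (\ref{zreq1})--(\ref{zreq3}) together with (\ref{qdiskcondition}) then follow from Propositions \ref{ParametersProp} and \ref{(0,z)formulaprop}, with no separate verification of the region-2 inequalities for $(0,z)$. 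To complete your write-up you would either need to import that structural step or genuinely prove your ``crux'' claim; as it stands the claim is only conjectured.
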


    \begin{proof}
         Let $(z_1,z_2)$ be in region two, and let
         \[\phi(\lambda)=(B_1(\lambda),B_2(\lambda)):=(m_{z_1}(\lambda m_{\alpha}(\lambda)),m_{z_2}(\omega\lambda m_{\beta}(\lambda)))=( m_{z_1/d}(\lambda)m_{d}(\lambda),m_{z_2}(\omega\lambda m_{\beta}(\lambda)))\] where $B_1$ and $B_2$ are second degree Blaschke products and $\phi$ is an extremal disk of the form (\ref{deg2disk}) for $(z_1,z_2)$, i.e. $\phi(0)=(z_1,z_2)$, $\phi(c)=(p,0)$, $\phi(d)=(0,q)$, $d=m_{\gamma}(c)$, and $g_{\D^2}((z_1,z_2),(p,0),(0,q))=\log|cd|$. Then consider \[\tilde \phi(\lambda):=\phi( m_{z_1/d}(s\lambda))=(s\lambda m_d(m_{z_1/d}(s\lambda)),B_2(m_{z_1/d}(s\lambda)))\]
         where $s$ is a unimodular constant to be determined later. Then
         \[\tilde\phi(0)=(0,B_2(z_1/d)), \hspace{2mm}\tilde\phi(s^{-1}m_{z_1/d}(c))=(p,0), \hspace{2mm}\tilde\phi(s^{-1}m_{z_1/d}(d))=(0,q).\] 
         Proposition \ref{FormulaProp} tells us that
         \[g_{\D^2}((0,B_2(z_1/d)),(p,0),(0,q))=\log|m_c(z_1/d)|+\log|m_d(z_1/d)|.\]
         Thus $\tilde \phi$ is extremal for $(0,B_2(z_1/d))$, $(p,0)$ and $(0,q)$.
         Next, set $z:=B_2(z_1/d)$. Recall the automorphism $G_z(\lambda_1,\lambda_2)=(\lambda_1,m_{z}(\lambda_2))$. Then the disk $\Phi:=G_z\circ \tilde \phi$ is an extremal disk such that\[ \Phi(0)=(0,0), \hspace{2mm}\Phi(s^{-1}m_{z_1/d}(c))=(p,z),\hspace{2mm}\Phi(s^{-1}m_{z_1/d}(d))=(0,m_{z}(q)). \]
    Choose $s$ so that the first coordinate of $\Phi$ is monic. Then $\Phi$ is extremal (extremal disks are invariant under automorphisms), has second degree Blaschke products in both components, and is monic in the first component. The proof of Lemma 3 in 
    \cite{KTZ17} shows that such an extremal disk must be of the form (\ref{deg2disk}). Thus it must be one of the disks constructed in section \ref{generalComp}. Then note that $\Phi(s^{-1}z_1/d)=(z_1,m_z(z_2))$ and $G_z\circ \Phi(s^{-1}z_1/d)=\tilde\phi(s^{-1}z_1/d)=(z_1,z_2)$. Thus $(z_1,z_2)\in S(e^{i\theta})$.
    \end{proof}

    \subsection{A Formula for $g_{\D^2}((z_1,z_2),(p,0),(0,q))$ in Region 2.}
 
\begin{theorem}\label{completedFormulaThm}
Assume $p,q>0$. Region 2 is the union of the hypersurfaces $S(e^{i\theta})$, and if $(z_1,z_2)\in S(e^{i\theta})$, then 
\footnotesize
\begin{equation}\label{cdexprThm1}
\begin{split}
\\&g_{\D^2}((z_1,z_2),(p,0),(0,q))=\\&\log\Big|\frac{-e^{i\theta} p^2z_1z_2 -e^{i\theta}pq + e^{i\theta}pz_2 +e^{i\theta}qz_1 + p^2qz_1z_2 + \omega( p^2q -  p^2z_2 -  pqz_1 + p z_1 z_2) - qz_1z_2}{\omega e^{2i\theta}( p^2 z_1  -  p) + \omega^2( e^{i\theta} p^2 - e^{i\theta} p z_1- p^2q +  pqz_1) + e^{i\theta}(e^{i\theta} q  - e^{i\theta}p q z_1+ p qz_1z_2  - qz_2)+\omega pqz_2(1- pz_1)  }\Big|,
\end{split}
\end{equation}
\normalsize
where $\omega$ is a root of the polynomial
\[\omega pe^{2i\theta} + \omega^2q - \omega^2e^{i\theta} - qe^{2i\theta} - \omega p + e^{i\theta}=0.\]
In particular $\omega$ is:
\begin{align*}
&\omega =\frac{-pe^{2i\theta}+p\pm\sqrt{p^2e^{4i\theta} - 4qe^{3i\theta}+ (4- 2p^2 + 4q^2)e^{2i\theta}    -4qe^{i\theta}+p^2}}{2(q-e^{i\theta})}\end{align*}
where the sign of the square roots depends on the particular $S(e^{i\theta})$. Further $(z_1,z_2)\in S(e^{i\theta})$ if there is $|z|<p/q$ that satisfies (\ref{qdiskcondition}), and $z$ is related to $p,q,$ and $e^{i\theta}$ by (\ref{zreq1}), (\ref{zreq2}), and (\ref{zreq3}).
\end{theorem}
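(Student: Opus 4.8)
The plan is to assemble Theorem~\ref{completedFormulaThm} from the pieces already established, treating it essentially as a packaging result. The backbone is Proposition~\ref{spanningProp}, which tells us region 2 is the union of the hypersurfaces $S(e^{i\theta})$, so the only thing that remains is to justify the displayed formula on a fixed $S(e^{i\theta})$ together with the claimed description of $\omega$. First I would fix $(z_1,z_2)$ in region 2 and, invoking Proposition~\ref{spanningProp}, produce the $e^{i\theta}\in\mathbb T$ and the corresponding $z$ with $|z|<p/q$ satisfying (\ref{zreq1}), (\ref{zreq2}), (\ref{zreq3}) and (\ref{qdiskcondition}), so that $(z_1,z_2)\in S(e^{i\theta})$ via the extremal disk $\phi_z$ constructed in section~\ref{generalComp}. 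By Proposition~9 (the one giving (\ref{cdformulawithz})) we then have a formula for $g_{\D^2}((z_1,z_2),(p,0),(0,q))$ in terms of the data, $z$, $\omega$ and $v$, and the computation carried out in section~\ref{Elimz} — substituting the expression (\ref{zexpr}) for $z$ and its conjugate — eliminates $z$ and yields precisely the displayed quotient, after the bookkeeping step of multiplying the denominator by the unimodular $\omega$ to trade $\omega v^2$ for $e^{i\theta}$ throughout. That establishes (\ref{cdexprThm1}).

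Next I would pin down $\omega$. The relation $v^2=e^{i\theta}/\omega$ together with the key identity derived in the proof of Proposition~\ref{omegavprop} — namely $|p|^2\omega^2v^4-\overline p\,\omega qv^4-\omega^2pv^2+\overline p v^2+\omega pq-|p|^2=0$, which after the substitution $v^2=e^{i\theta}/\omega$ and using $p$ real becomes (\ref{uomegaeqn}) — rearranges to the quadratic $\omega p e^{2i\theta}+\omega^2 q-\omega^2 e^{i\theta}-qe^{2i\theta}-\omega p+e^{i\theta}=0$ stated in the theorem; solving this quadratic in $\omega$ gives the stated radical expression, with the sign of the square root dictated by which root actually produces parameters with $c,\alpha,\beta\in\D$ and $t\in(0,1)$ (the worked cases $e^{i\theta}=\pm i$ in section~\ref{z=pmi} and $e^{i\theta}=\pm 1$ in section~\ref{formulazreal} illustrate both sign choices). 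Finally, the membership criterion "$(z_1,z_2)\in S(e^{i\theta})$ iff there is $|z|<p/q$ satisfying (\ref{qdiskcondition}) and (\ref{zreq1})--(\ref{zreq3})" is just the unwinding of the definition $S(e^{i\theta})=\bigcup_{z\in A(e^{i\theta})}G_z\circ\phi_z(\D)$: condition (\ref{qdiskcondition}) is exactly the statement that $(z_1,m_z(z_2))\in\phi_z(\D)$ (this is how (\ref{qdiskcondition}) was obtained from (\ref{diskcondition}) by solving for $q$, equivalently it encodes the disk-membership equation), while (\ref{zreq1})--(\ref{zreq3}) are precisely the conditions defining $z\in A(e^{i\theta})$, i.e. that the parameters attached to $z$ are genuine disk parameters.

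The main obstacle is not conceptual but bookkeeping: verifying that the long Maple simplification in section~\ref{Elimz} — substituting a degree-considerable rational expression for $z$ and its conjugate into (\ref{cdformulawithz}) and watching the $z$-dependence cancel — really collapses to the compact quotient in (\ref{cdexprThm1}). I would present this as a direct consequence of the earlier displayed computation rather than redo it, noting that the cancellation of $z$ is forced by the fact that $g_{\D^2}((z_1,z_2),\dots)$ is intrinsically a function of $(z_1,z_2),p,q$ alone, so once one knows the formula is independent of the choice of disk through $(z_1,z_2)$, the parameter $z$ must drop out; the algebra merely exhibits this. A secondary subtlety is the sign ambiguity in the radical for $\omega$: strictly one should argue that on each connected hypersurface $S(e^{i\theta})$ the correct root is locally constant (the parameters depend continuously on the data and never hit the branch locus inside region 2, since $q\neq e^{i\theta}$ there), so a single sign works per $S(e^{i\theta})$ — I would state this and defer the explicit sign determination to the worked examples already in the paper.
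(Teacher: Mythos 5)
Your proposal is correct and follows essentially the same route as the paper's own proof: it assembles the theorem from Proposition \ref{spanningProp} (spanning of region 2), the elimination of $z$ carried out in section \ref{Elimz} applied to (\ref{cdformulawithz}) for the displayed formula, Proposition \ref{omegavprop} (equivalently (\ref{uomegaeqn}) with $p$ real, which is the stated quadratic) for $\omega$, and Proposition \ref{(0,z)formulaprop} together with the definition of $S(e^{i\theta})$ for the membership conditions. The extra remarks on the sign of the radical and on why $z$ must cancel are consistent with, and slightly more explicit than, the paper's treatment.
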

\begin{proof}
The expression for $g_{\D^2}((z_1,z_2),(p,0),(0,q))$ was found in section \ref{Elimz}. Recall that proposition \ref{omegavprop} states that when $z\in A(e^{i\theta})$, then $\omega$ is as stated. On the hypersurface $S(e^{i\theta})$, $e^{i\theta}$ is constant, so $\omega$ will be constant and will be as above. The conditions (\ref{zreq1}), (\ref{zreq2}), and (\ref{zreq3}) follow from Proposition \ref{(0,z)formulaprop}
\end{proof}
\noindent\textbf{Remark}: If one evaluates formula (\ref{cdexprThm1}) at $(0,z)$ and sets $p$ equal to the expression given in (\ref{pexpr}), then (\ref{cdexprThm1}) agrees with (\ref{cdexpr}).

\section{Applications of Theorem \ref{completedFormulaThm}}
In this section, we will use Theorem \ref{completedFormulaThm} to find an explicit formula for $g_{\D^2}$ for $S(\pm 1)$
 and $S(i)$. We will also explore the simplifications that occur in the special case $q=p$. As before assume $p,q>0$.
 
 \subsection{The formula along $S(\pm 1)$}\label{sectionhypersurface}
 \begin{corollary}\label{formulaS1}
Let $p,q>0$ and $(z_1,z_2)\in \D^2$. If there exists a real number $r$ such that $-p<r<\min\{1,p/q\}$
and 
\begin{align}\label{S1Req}
    r=\frac{p(pz_1z_2^2 + qz_1^2z_2 - 2pz_1z_2 - 2qz_1z_2 + pz_1 + qz_2 - z_1^2 + 2z_1z_2 - z_2^2)}{pqz_1^2z_2^2 - 2pqz_1z_2 - pz_1^2z_2 - qz_1z_2^2 + 2pz_1z_2 + 2qz_1z_2 + pq - pz_2 - qz_1},
\end{align}
then
\begin{align}\label{S1cdexpr}
    g_{\D^2}((z_1,z_2),(p,0),(0,q))=\log\Big| \frac{pqz_1z_2 - pz_1z_2 - qz_1z_2 - pq + pz_2 + qz_1}{pqz_1z_2 - pq - pz_1 - qz_2 + p + q}\Big|.
\end{align}
If there exists a real number $r$ such that $\max\{-1,-p/q\}<r<-p$ and \begin{align}\label{S-1Req}    
r=\frac{p(pz_1z_2^2 + qz_1^2z_2 + 2pz_1z_2 + 2qz_1z_2 + pz_1 + qz_2 - z_1^2 + 2z_1z_2 - z_2^2)}{pqz_1^2z_2^2 - 2pqz_1z_2 - pz_1^2z_2 - qz_1z_2^2 - 2pz_1z_2 - 2qz_1z_2 + pq - pz_2 - qz_1},
\end{align}
then
\begin{align} \label{S-1cdexpr}   
g_{\D^2}((z_1,z_2),(p,0),(0,q))=\log\Big|\frac{pqz_1z_2 + pz_1z_2 + qz_1z_2 - pq + pz_2 + qz_1}{pqz_1z_2 - pq - pz_1 - qz_2 - p - q}\Big|.\end{align}
 \end{corollary}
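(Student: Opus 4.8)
The plan is to specialize Theorem~\ref{completedFormulaThm} and the parameter computations of Section~\ref{region2(0,z)section} to the two unimodular constants $e^{i\theta}=\pm1$. Recall from Proposition~\ref{zrealprop} and Section~\ref{formulazreal} that $S(1)$ is the hypersurface swept out by the real parameters $z\in A(1)$, for which the extremal disk has $\omega=-1$ and $v=i$ (so $e^{i\theta}=\omega v^2=1$), while $S(-1)$ is swept out by $z\in A(-1)$, for which $\omega=-1$ and $v=1$ (so $e^{i\theta}=\omega v^2=-1$). Moreover $A(1)=\{z\in(-1,1):-p<z<\min\{1,p/q\}\}$ and $A(-1)=\{z\in(-1,1):\max\{-1,-p/q\}<z<-p\}$, which is precisely the range in which the real number $r$ of the statement is required to lie.

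First I would substitute $e^{i\theta}=1$, $\omega=-1$ into formula~(\ref{cdexprThm1}) of Theorem~\ref{completedFormulaThm} and collect terms; the numerator and denominator collapse to the polynomials appearing in~(\ref{S1cdexpr}), and the identical substitution with $e^{i\theta}=-1$, $\omega=-1$ produces~(\ref{S-1cdexpr}). These are purely mechanical simplifications, with the sign discrepancies between the two formulas coming from the factor $e^{i\theta}=\pm1$ multiplying certain monomials; one may cross-check them by evaluating at $(z_1,z_2)=(0,z)$ against the real-$z$ formulas of Section~\ref{formulazreal}, or against the Remark following Theorem~\ref{completedFormulaThm}.

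Second, I would pin down the membership condition. By Proposition~\ref{spanningProp}, $(z_1,z_2)\in S(1)$ exactly when there is a $z\in A(1)$ with $(z_1,m_z(z_2))\in\phi_z(\D)$, i.e. with~(\ref{qdiskcondition}) holding, and solving that relation for $z$ produced the closed form~(\ref{zexpr}) in terms of the data and the two unimodular constants $\omega,v$. Substituting $\omega=-1$, $v=i$ into~(\ref{zexpr}) and simplifying yields exactly the expression for $r=z$ in~(\ref{S1Req}); substituting $\omega=-1$, $v=1$ yields~(\ref{S-1Req}). Together with the ranges for $A(1)$ and $A(-1)$ recorded above, this gives the stated constraints $-p<r<\min\{1,p/q\}$ and $\max\{-1,-p/q\}<r<-p$; and once such an $r$ exists, $(z_1,z_2)=G_r\circ\phi_r(\,s^{-1}z_1/d\,)$ lies on the corresponding extremal disk by construction, so $g_{\D^2}$ is given by the specialized formula.

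The routine but bulky part is the two algebraic simplifications (of~(\ref{cdexprThm1}) and of~(\ref{zexpr})), best carried out with a computer algebra system as elsewhere in the paper; there is no genuine obstacle. The only point requiring care is matching the correct pair $(\omega,v)$ — not merely the product $e^{i\theta}=\omega v^2$ — to each of $S(1)$ and $S(-1)$, which Section~\ref{formulazreal} has already settled (in particular $\omega=-1$, never $+1$, whenever $z$ is real).
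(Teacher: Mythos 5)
Your proposal is correct and follows essentially the same route as the paper: specialize the hypersurface relation (\ref{zexpr}) and the general formula (\ref{cdexprThm1}) at $\omega=-1$, $v=i$ (so $e^{i\theta}=1$) and at $\omega=-1$, $v=1$ (so $e^{i\theta}=-1$), taking the admissible ranges $-p<z<\min\{1,p/q\}$ and $\max\{-1,-p/q\}<z<-p$ from the real-$z$ analysis of Section \ref{formulazreal}. Your explicit appeal to Proposition \ref{spanningProp} to justify the membership criterion, and your care in matching the pair $(\omega,v)$ (not just $e^{i\theta}$), merely make explicit what the paper leaves implicit, and your labeling of $A(\pm1)$ agrees with the usage in Section \ref{formulazreal} and the corollary itself.
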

\begin{proof}
 Set $\omega=-1$ and $v=i$. Then $e^{i\theta}=1$. The expression for $z$ in (\ref{zexpr}) reduces to (\ref{S1Req}) when $\omega=-1$ and $v=i$. Section \ref{formulazreal} shows that we need $-p<z<\min\{1,p/q\}$.
Then for $(z_1,z_2)\in S(1)$, formula (\ref{cdexprThm1}) of Theorem \ref{completedFormulaThm} reduces to (\ref{S1cdexpr}) when $\omega=-1$ and $v=i$. Repeating these steps for $e^{i\theta}=-1$ gives us (\ref{S-1Req}) and (\ref{S-1cdexpr}).
\end{proof} 
 \noindent \textbf{Remark}: When $(z_1,z_2)=0$, (\ref{S1Req}) reduces to $z=0$ and (\ref{S1cdexpr}) reduces to $\log|pq/(p+q-pq)|$. This formula agrees with a partial formula found by Wikstr\"{o}m \cite{FW03}.
 \subsection{The formula along S(i)}
 For the hypersurface $S(i)$, we have that the $z\in A(i)$ are of the form
 \[p^2 = \frac{(q^2+1)\text{Re}(z)^2}{|z|^2+1-2\text{Im}(z)},\hspace{3mm}p<|z|<\min\{p/q,1\},\hspace{3mm} \text{Im}(z)>0.\]
 Equivalently, if one writes $z=x+iy$ and solves for $y$ above, then one gets the $z\in A(i)$ are of the form
 \[z=\frac{(y - 1)p}{\sqrt{-p^2 + q^2 + 1}} + iy,\]
with $y>0$ and $p<|z|<\min\{p/q,1\}$. Additionally, we have
\[\omega=\frac{-p - i\sqrt{-p^2 + q^2 + 1}}{i-q }\text{  and  } v^2=\frac{i(i-q )}{-p - i\sqrt{-p^2 + q^2 + 1}}.\]
See section \ref{z=pmi} for additional details. Substituting these values into (\ref{qdiskcondition}) gives a condition for $(z_1,z_2)\in S(i)$. Substituting in $\omega$ and $v$ as above into formula (\ref{cdexprThm1}) gives 
 \begin{equation*}
     \begin{split}
      &\log\Big|\Big((-p^2q+p^2z_2 +pqz_1-pz_1z_2) \sqrt{-p^2 + q^2 + 1}+2p^2qz_1z_2 +p^2q^2z_1z_2i - p^3z_2i - qz_1p^2i \\&- pqi + qz_1i + 2p^2qz_1z_2+pz_2i - q^2z_1z_2i + p^3qi + pq^2 - pqz_2 - q^2z_1 - z_1qz_2\Big)\Big/\Big((-p^2qz_1z_2\\&-2p^3+p^2z_1+pqz_2+p) \sqrt{-p^2 + q^2 + 1} +z_2qi+ - p^2q^2i - p^2z_2qi + q^2i + 2ip^4 - pqz_1z_2i\\& + pq^2z_1z_2 + pz_1i - p^3z_1i+ p^3qz_1z_2i - 2ip^2 - pqz_1 - q^2z_2 + q\Big)\Big|.
     \end{split}
 \end{equation*}
 \subsection{The $p=q$ case and the Carad\'eodory Metric on the Symmetrized Bidisk}\label{symmbidisksection}
 If $p=q$, equation (\ref{uomegaeqn})
 reduces to
 \[(\omega-1)(pe^{2i\theta}+p\omega-\omega e^{i\theta}-e^{i\theta})=0.\]
 This gives
 \begin{align}\label{omegavexprp=q}
 v^2 =\frac{ p-e^{i\theta}}{1-pe^{i\theta} } \hspace{5mm} \text{  and  }\hspace{5mm} \omega = \frac{e^{i\theta}(1-pe^{i\theta})}{p - e^{i\theta}}.\end{align}
 Additionally, region 1 is a subset of region 3.
 \begin{proposition}
    If $q=p>0$,  then region 1 is contained in region 3.
 \end{proposition}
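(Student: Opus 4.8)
The plan is to combine the description of region~1 in terms of interpolating maps (Section~\ref{region1section}) with the rigidity case of the Schwarz lemma. Recall that $(z_1,z_2)$ lies in region~1 exactly when there is an $f\in\text{Hol}(\D,\D)$ with either $f(z_1)=z_2,\ f(p)=0,\ f(0)=q$, or $f(z_2)=z_1,\ f(0)=p,\ f(q)=0$. I will show that when $q=p>0$ each of these two interpolation data forces $f=m_p$, and that this in turn forces $(z_1,z_2)$ onto the degenerate set defining region~3, namely $\{|m_{z_1}(p)|=|z_2|\}\cup\{|z_1|=|m_{z_2}(q)|\}$.

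For the first type of data, set $g:=m_p\circ f\in\text{Hol}(\D,\D)$. Since $p$ is real, $m_p$ is an involution of $\D$, and the conditions $f(0)=p$, $f(p)=0$ translate to $g(0)=0$ and $g(p)=p$. The Schwarz lemma gives $|g(w)|\le|w|$ for all $w\in\D$, so the equality $|g(p)|=p$ at the interior point $p\in(0,1)$ forces $g(w)=cw$ with $|c|=1$; evaluating at $w=p>0$ gives $c=1$, hence $g=\mathrm{id}$ and $f=m_p$. Then $z_2=f(z_1)=m_p(z_1)$, and using the symmetry $|m_a(b)|=|m_b(a)|$ of the pseudohyperbolic distance we obtain $|z_2|=|m_{z_1}(p)|$, i.e. $(z_1,z_2)$ lies in region~3. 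The second type of data is handled identically: with $q=p$, the conditions $f(0)=p$, $f(p)=0$ again give $g:=m_p\circ f=\mathrm{id}$, so $f=m_p$ and $z_1=m_p(z_2)$, whence $|z_1|=|m_{z_2}(q)|$ and again $(z_1,z_2)$ lies in region~3.

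Since every point admitting either interpolation datum lies in region~3, and region~1 is (by its characterization) a subset of the set of such points, region~1 is contained in region~3 (equivalently, region~1 is empty in the case $q=p$), which is the assertion. The only step that needs care is the rigidity step: one should note explicitly that $p$ is an \emph{interior} point of $\D$ and that $p>0$ is real, so that the unimodular constant produced by the equality case of Schwarz is pinned down to be $1$ rather than an arbitrary rotation; apart from that the argument is routine, and I do not anticipate a genuine obstacle.
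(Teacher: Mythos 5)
Your proof is correct and follows essentially the same route as the paper: both arguments use Schwarz-lemma rigidity to force the interpolating function to be $m_p$ (the paper via the equality case of the invariant Schwarz--Pick inequality applied to the pair $0,p$, you via the classical Schwarz lemma after conjugating by the involution $m_p$), and then conclude $z_2=m_p(z_1)$ (resp. $z_1=m_p(z_2)$), which places $(z_1,z_2)$ on the degenerate set defining region 3. Your explicit treatment of the second interpolation datum and the remark pinning the rotation constant to $1$ using $p>0$ are fine refinements of the same argument.
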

\begin{proof} If $(z_1,z_2)$ lies in region one, then there exists an $f\in\text{Hol}(\D)$ such that
 \[f(z_1)=z_2 \text{ (or } f(z_2)=z_1) \text{ and } f(p)=0, f(0)=p.\]
 The invariant form of the Schwarz lemma states that for any $f\in \text{Hol}(\D)$ the following inequality
 \[\Big| \frac{f(\lambda_1)-f(\lambda_2)}{1-\overline{f(\lambda_2)}f(\lambda_1)}\Big|\leq \Big| \frac{\lambda_1-\lambda_2}{1-\overline{\lambda_2}\lambda_1}\Big|\]
 holds with equality if and only if $f$ is an automorphism of the unit disk.
When $\lambda_1=p$, $f(\lambda_1)=0$, $\lambda_2=0$, and $f(\lambda_2)=p$, this inequality implies that $f(\lambda)=m_p(\lambda)$. Thus $z_2=m_p(z_1)$. This implies that region 1 is contained in region 3.
\end{proof}
\noindent We can restate Theorem \ref{completedFormulaThm} for the $q=p$ case as follows. We group $z_1$ and $z_2$ together as $z_1+z_2$ and $z_1z_2$ in preparation for the following section on the symmetrized bidisk.
\begin{theorem}\label{p=qThm}
\text{ }
\\
\textbf{(a)} Suppose $q=p>0$. Let $(z_1,z_2)$ be in region 2. Then $(z_1,z_2)\in S(e^{i\theta})$ if there is a parameter $z\in \D$ such that
     \[\overline{z}pe^{4i\theta} - \overline{z}e^{3i\theta} - pe^{3i\theta} + pe^{i\theta} - pz + e^{i\theta}z=0,\]
    \[\max\{p^2,|z|^2\}<|c|^2=\frac{pe^{2i\theta}z - pe^{i\theta} + pz - e^{i\theta}z}{e^{i\theta}(pe^{2i\theta} - 2e^{i\theta} + z)}<1,\]
    \[0<t=\frac{(e^{i\theta} - z)(p - e^{i\theta})}{e^{i\theta}(pe^{2i\theta} + p - 2e^{i\theta})}<1,\]
   and
\begin{align}\label{zexprq=p}\begin{split}
    &z=A/B.
\end{split}\end{align}
where
\begin{align*}
        &A=-e^{i\theta}\big(p^2e^{4i\theta}z_1z_2 + e^{3i\theta}(-p^2z_1z_2(z_1+z_2) - p^2(z_1 +z_2) + p[(z_1+z_2)^2-4z_1z_2])\\&+ e^{2i\theta}(p^2(z_1z_2)^2 - [(z_1+z_2)^2-4z_1z_2] + 3p^2z_1z_2   + p(z_1 + z_2)) - 4pe^{i\theta}z_1z_2 + pz_1z_2(z_1 +z_2)  \\&- p^2(z_1z_2)^2\big),
    \\\\&B=e^{4i\theta}(p^2 - p(z_1+z_2))+ e^{3i\theta}(3pz_1z_2- p+ (z_1 + z_2))  -e^{2i\theta}( 4z_1z_2+ 2p^2z_1z_2)    + e^{i\theta}(3pz_1z_2 \\& - p(z_1z_2)^2    + z_1z_2(z_1 + z_2))+ p^2(z_1z_2)^2-pz_1z_2( z_1 +z_2).
\end{align*}
\textbf{(b)} If $(z_1,z_2)$ is in region 2 and $(z_1,z_2)\in S(e^{i\theta})$, then
\begin{align}\label{cdexprp=q}
    g_{\D^2}((z_1,z_2),(p,0),(0,p))=\log\Big|\frac{ pe^{2i\theta} - pz_1z_2 - e^{2i\theta}(z_1 + z_2) + 2e^{i\theta}z_1z_2}{pe^{2i\theta} - pz_1z_2 - 2e^{i\theta} + z_1 + z_2}\Big|.\end{align}
    \end{theorem}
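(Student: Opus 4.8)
The plan is to obtain Theorem~\ref{p=qThm} as a direct specialization to $q=p$ of Theorem~\ref{completedFormulaThm} and of the intermediate formulas of Section~\ref{region2section}; the only new structural ingredient is the factorization of (\ref{uomegaeqn}) recorded just before the theorem. Putting $q=p$ in (\ref{uomegaeqn}) and dividing by $p\neq 0$ gives $(\omega-1)(pe^{2i\theta}+p\omega-\omega e^{i\theta}-e^{i\theta})=0$; since $\omega\neq 1$ by the Remark following Section~\ref{formulazreal}, the second factor must vanish, and solving it for $\omega$ together with $v^2=e^{i\theta}/\omega$ yields (\ref{omegavexprp=q}). This identity is what drives every subsequent simplification, since it eliminates one of the two unimodular constants.

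For part (a), by Theorem~\ref{completedFormulaThm} and Proposition~\ref{(0,z)formulaprop} a point $(z_1,z_2)$ of region~2 lies in $S(e^{i\theta})$ precisely when there is a parameter $z$ with $|z|<p/q$ satisfying (\ref{zreq1})--(\ref{zreq3}) together with (\ref{qdiskcondition}); I would specialize each of these to $q=p$. Equation (\ref{fundamentalpoly}) with $q=p$ factors, and, exactly as in the proof of Proposition~\ref{fundpolyprop} where the genuine factor is (\ref{factor1}), the spurious factors may be discarded, leaving the relation $\overline z pe^{4i\theta}-\overline z e^{3i\theta}-pe^{3i\theta}+pe^{i\theta}-pz+e^{i\theta}z=0$ stated in the theorem. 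Using this relation to remove every occurrence of $\overline z$ (and $|z|^2$) from the formulas for $r^2$ and $t$ of Proposition~\ref{ParametersProp}, both collapse to the rational functions of $z$ and $e^{i\theta}$ claimed for $|c|^2$ and $t$. Finally, substituting $q=p$ and the values (\ref{omegavexprp=q}) into (\ref{qdiskcondition})---which, after the cancellation described in Section~\ref{Elimz}, is linear in $z$---and solving gives (\ref{zexprq=p}). The grouping into $z_1+z_2$ and $z_1z_2$ is legitimate here (and in part (b)) because, when $p=q$, the coordinate swap $(w_1,w_2)\mapsto(w_2,w_1)$ is an automorphism of $\D^2$ interchanging the two poles $(p,0)$ and $(0,p)$, so $g_{\D^2}((z_1,z_2),(p,0),(0,p))$ and the data attached to it are symmetric in $z_1,z_2$.

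For part (b), I would take the displayed formula for $g_{\D^2}$ on $S(e^{i\theta})$ at the end of Section~\ref{Elimz} (equivalently (\ref{cdexprThm1})), set $q=p$, and substitute $\omega v^2=e^{i\theta}$ together with the value of $\omega$ from (\ref{omegavexprp=q}); after clearing the common factors this reduces to (\ref{cdexprp=q}), organized once more in $z_1+z_2$ and $z_1z_2$. The Remark following Theorem~\ref{completedFormulaThm} furnishes a consistency check by restricting to $(z_1,z_2)=(0,z)$.

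The substantive content is large but routine symbolic simplification, of the same kind carried out with Maple throughout the paper; the step most likely to require care is the elimination of $\overline z$ in part~(a): one must use the reduced fundamental relation to clear every appearance of $\overline z$ from the parameter expressions and from (\ref{qdiskcondition}) before solving, and check that the branch of the square root for $\omega$ (whose sign depends on the particular $e^{i\theta}$, cf.\ Proposition~\ref{omegavprop}) is the one selected by (\ref{omegavexprp=q}). Everything past that is bookkeeping.
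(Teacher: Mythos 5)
Your proposal is correct and follows essentially the same route as the paper: specialize (\ref{uomegaeqn}) at $q=p$ to obtain (\ref{omegavexprp=q}), reduce the factored form of (\ref{fundamentalpoly}) to the quoted relation, eliminate $\overline z$ from the expressions for $|c|^2$ and $t$, and substitute $q=p$ and (\ref{omegavexprp=q}) into (\ref{zexpr}) and (\ref{cdexprThm1}). The one small divergence is your justification for discarding the extra factor: the paper does not argue by analogy with Proposition \ref{fundpolyprop}, but observes that if the first factor of the $q=p$ factorization vanished then $|c|=1$, which is impossible since $c\in\D$.
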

\noindent\textbf{Remark}: If $(z_1,z_2)$ lies in region 1, then we have that $z_2=m_p(z_1)$. If one substitutes in $m_p(z_1)$ for $z_2$ into equation (\ref{cdexprp=q}), one gets that
    \[g_{\D^2}((z_1,z_2),(p,0),(0,p))=g_{\D^2}((z_1,m_p(z_1)),(p,0),(0,p))=\log\Big|z_1\frac{p-z_1}{1-pz_1}\Big|.\]
    This formula agrees with the formula found in region 1. Then by continuity, the formula also holds for region 3 and therefore the entire bidisk.
    \begin{proof}
         First, note that when $q=p$, (\ref{fundamentalpoly}) simplifies to
 \[(\overline{z}pe^{i\theta}- \overline{z}e^{2i\theta} + pe^{2i\theta} - pe^{i\theta}z - p + z)(\overline{z}pe^{4i\theta} - \overline{z}e^{3i\theta} - pe^{3i\theta} + pe^{i\theta} - pz + e^{i\theta}z) = 0.\]
 If the first factor is $0$, then $|c|=1$. This implies that (\ref{fundamentalpoly}) reduces to
 \begin{align}
     \overline{z}pe^{4i\theta} - \overline{z}e^{3i\theta} - pe^{3i\theta} + pe^{i\theta} - pz + e^{i\theta}z = 0.\label{p=qfundpoly}
 \end{align}
 Substituting in the expressions for $\omega$ and $v^2$ given in (\ref{omegavexprp=q}) into the expression for $z$ given by (\ref{zexpr}) and simplifying gives (\ref{zexprq=p}).
    Solving for $\overline{z}$ in equation (\ref{p=qfundpoly}), substituting into the expressions for $|c|^2$ and $t$, and simplifying gives the expressions for $|c|^2$ and $t$ stated in part (a). Formula (\ref{cdexprp=q}) is just formula (\ref{cdexprThm1}) with $\omega$ as in (\ref{omegavexprp=q}).
    \end{proof}
    
\subsubsection{The Carath\'eodory Metric for the Symmetrized Bidisk}
Let  
\begin{align*}
G:=\{ (z_1+z_2,z_1z_2): z_1,z_2\in\D\}
\end{align*}denote the symmetrized bidisk. The symmetrized bidisk is not convex, but it is polynomial convex, and has the  symmetrized torus as its
distinguished boundary, which is topologically a m\"obius band \cite{AY04}.  An important result by Lempert states the Kobayshi and 
Carath\'eodory distances are equal for any bounded convex domain in $\C^n$. Despite not being convex,  Agler and Young showed that Carath\'eodory and 
Kobayshi distances on $G$ are equal \cite{AY04}. Further, it was shown by C. Costrara that $G$ is not holomorphically equivalent to a convex domain \cite{CC04}. \par
A useful observation is that $G$ is the image of $\D^2$ under the ``symmetrization" map \begin{align*}
\pi(\lambda_1,\lambda_2)=(\lambda_1+\lambda_2,\lambda_1\lambda_2).
\end{align*}
This fact and the following proposition will allow us to find a formula for the Carath\'eodory metric using the formula for the pluricomplex Green function for the bidisk. A more general version of the following proposition was first proved by Edigarian and Zwonek  \cite{EZ98}.
\begin{proposition}[Edigarian and Zwonek \cite{EZ98}]\label{coveringProp}
    Let $\Omega_1\subset \C^n$ and $\Omega_2\subset \C^m$ be two domains, and let $F\in \text{Hol}(\Omega_1,\Omega_2)$ be a proper 2-1 map. Suppose that $z\in \Omega_1$, and $F(w_1)=F(w_2)=w$. Then
    \begin{align*}
        g_{\Omega_2}(F(z),w)=g_{\Omega_1}(z,w_1,w_2)
    \end{align*}
\end{proposition}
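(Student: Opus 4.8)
The plan is to prove the two inequalities $g_{\Omega_2}(F(z),w) \le g_{\Omega_1}(z,w_1,w_2)$ and $g_{\Omega_1}(z,w_1,w_2) \le g_{\Omega_2}(F(z),w)$ separately, using the extremal definition of the pluricomplex Green function together with the pullback/pushforward behavior of plurisubharmonic functions under the proper holomorphic map $F$. For the first inequality, I would take any competitor $u \in PSH(\Omega_2,[-\infty,0))$ with a logarithmic pole of weight one at $w$ and form the pullback $u \circ F$. This is plurisubharmonic on $\Omega_1$ (composition of a psh function with a holomorphic map), negative, and — because $F$ is a local biholomorphism away from its branch locus, and in particular near each preimage $w_1, w_2$ of $w$ (the two sheets), and the branch locus is a proper analytic subset where boundedness above handles the singularity — it acquires logarithmic poles of weight one at both $w_1$ and $w_2$. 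Hence $u \circ F$ is an admissible competitor for $g_{\Omega_1}(\cdot, w_1, w_2)$, giving $u(F(z)) = (u\circ F)(z) \le g_{\Omega_1}(z,w_1,w_2)$; taking the supremum over $u$ yields the first inequality.

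For the reverse inequality, I would start with a competitor $v \in PSH(\Omega_1,[-\infty,0))$ with logarithmic poles of weight one at $w_1$ and $w_2$ and push it forward. Since $F$ is proper and 2-to-1, define $\tilde v(y) := \sum_{x \in F^{-1}(y)} v(x)$ (counting multiplicity on the branch locus). Pushforwards of psh functions under proper holomorphic maps are psh — this is a standard fact, and it is exactly the place where properness of $F$ is used — so $\tilde v \in PSH(\Omega_2,[-\infty,0))$ and is negative. Near $y = w$, the two summands are $v$ evaluated near $w_1$ and near $w_2$, each contributing a logarithmic pole of weight one; since $F$ is unramified at $w_1, w_2$ (the symmetrized map $\pi$ branches only on the diagonal $z_1 = z_2$, and $w_1 \ne w_2$ is assumed), $\tilde v$ has a logarithmic pole of weight exactly one at $w$. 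Thus $\tilde v$ is admissible for $g_{\Omega_2}(\cdot, w)$, and evaluating at $F(z)$ gives $\tilde v(F(z)) = \sum_{x\in F^{-1}(F(z))} v(x) \ge v(z)$ (the other summand is $\le 0$), so $v(z) \le g_{\Omega_2}(F(z),w)$. Taking the supremum over $v$ completes the argument, and combining the two inequalities gives equality.

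The main obstacle I anticipate is the careful handling of the branch locus in both directions: verifying that pulling back does not create extraneous poles or destroy the logarithmic-pole estimate along the ramification set (handled by the removable-singularity result for psh functions bounded above on analytic sets, e.g. Theorem 2.7.1 of \cite{MK91} already cited in the paper), and verifying that the pushforward $\tilde v$ is genuinely plurisubharmonic across the branch locus and that its pole at $w$ has weight exactly one rather than two — this last point relies crucially on $w_1 \ne w_2$ so that the two sheets separate near $w$. Once these local points are pinned down, the global inequalities follow immediately from the extremal characterization. One should also note that since $\Omega_1$ is hyperconvex (e.g. $\D^2$) and $F$ proper, $\Omega_2$ inherits hyperconvexity, so all the Green functions involved are finite and continuous, which legitimizes the manipulations.
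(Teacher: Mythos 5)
Your first inequality (pulling back a competitor for $g_{\Omega_2}(\cdot,w)$, or equivalently pulling back $g_{\Omega_2}(\cdot,w)$ itself) is exactly the paper's argument and is fine. The problem is in the reverse direction, where you define the pushforward as a \emph{sum} over the fiber, $\tilde v(y)=\sum_{x\in F^{-1}(y)}v(x)$, and then assert $\tilde v(F(z))\ge v(z)$ ``because the other summand is $\le 0$.'' That justification proves the opposite: if the other summand is $\le 0$, then $\tilde v(F(z))=v(z)+v(z')\le v(z)$. So your chain collapses; all the sum pushforward yields is $v(z)+v(z')\le g_{\Omega_2}(F(z),w)$, which is useless since $v(z')$ is an uncontrolled nonpositive quantity (possibly $-\infty$). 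Incidentally, your discussion of the pole at $w$ is also off for the sum: as $y\to w$ \emph{both} fiber points tend to $w_1$ and $w_2$ respectively, so $\tilde v(y)\le 2\log|y-w|+O(1)$ — a heavier pole, which would still be admissible (the pole condition is only an upper bound), but it contradicts your ``weight exactly one'' claim; the genuine failure is the evaluation step, not the pole weight.

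The repair is what the paper does: push forward by the \emph{maximum} over the fiber, $F_*v(y):=\max\{v(x):x\in F^{-1}(y)\}$, which is plurisubharmonic when $F$ is proper (Klimek, Prop.\ 2.9.26, the result you want in place of the ``sum of psh over fibers'' fact), is negative, and satisfies the weight-one logarithmic pole bound at $w$ since each branch does. Then $F_*v(F(z))=\max\{v(x):x\in F^{-1}(F(z))\}\ge v(z)$ is genuinely true, giving $v(z)\le F_*v(F(z))\le g_{\Omega_2}(F(z),w)$, and taking the supremum over competitors $v$ finishes the proof. With that substitution your argument coincides with the paper's; as written, the second inequality does not go through.
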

\begin{proof}
Define  $F^*g_{\Omega_2}(z,w):=g_{\Omega_2}(F(z),w)$. Then $F^*g_{\Omega_2}(z,w)\leq g_{\Omega_1}(z,w_1,w_2)$ because $F^*g_{\Omega_2}$ is negative, plurisubharmonic ($F$ is holomorphic), and has logarithmic poles at $w_1$ and $w_2$. Next, define the push forward $F_*u$ of $u\in PSH(\Omega_1)$ by
\[F_*u(w):=\max\{ u(z):z\in F^{-1}(w)\}.\]
$F_*$ will be PSH when $F$ is a proper map (see Prop 2.29.26 of \cite{MK91}). Let $u$ be in the defining family for $g_{\Omega_1}(\cdot, w_1,w_2)$. Then $F_*u$ is negative and has logarithmic poles at $w_1$ and $w_2$. Next, note that
\[F^*F_*(u)(z)=(F_*u)(F(z))=\max\{u(s):s\in F^{-1}(F(z))\}\geq u(z).\]
Thus
\[u(z)\leq (F^*F_*u)(z)\leq F^*g_{\Omega_2}(z,w)=g_{\Omega_2}(F(z),w).\]
Since $u$ was an arbitrary function in the defining function for $g_{\Omega_1}(\cdot,w_1,w_2)$, then it follows that $g_{\Omega_2}(F(z),w))\geq g_{\Omega_1}(z,w_1,w_2)$. Thus the equality follows.
\end{proof}
\noindent Next, note that $\pi(p,0)=(p,0)$ and $\pi(0,p)=(p,0)$. So, applying Proposition \ref{coveringProp} gives
\[g_{G}((z_1+z_2,z_1z_2),(p,0))=g_{\D^2
}((z_1,z_2),(p,0),(0,p)).\]
Now, one may use an  automorphism of $G$ to map an arbitrary point $(p_1+p_2,p_1p_2)\in G$ to $(p,0)$ with $p>0$. Indeed, let $h$ be an automorphism of $\D$, and define $A_{h}(\pi(\lambda_1,\lambda_2)):=\pi(h(\lambda_1),h(\lambda_2))$. The automorphisms of the symmetrized bidisk are of the form
\begin{align*}
\text{Aut}(G)=\{A_{h}:h\in \text{Aut}(\D)\}.\end{align*}
Let $p=m_{p_2}(p_1)$ and $h=(|p|/p)m_{p_2}(\lambda)$. Then
\begin{align*}
A_{h}(\pi(p_1,p_2))=\pi\big((|p|/p)m_{p_2}(p_1),0\big)=(|p|,0).
\end{align*}
Thus WLOG we may assume the pole is at $(p,0)$ with $p>0$. Agler and Young found an explicit formula for $C_{G}$ when one of the two points is zero \cite{AY01}:
\begin{align*}
    C_{G}((0,0),(s,p))=\tanh^{-1}\frac{2|s-p\overline{s}|+|s^2-4p|}{4-|s|^2}.
\end{align*} 
Agler and Young \cite{AY04} later found a complete formula:
\begin{align}\label{caraMetric}
C_{G}((s_1,p_1),(s_2,p_2))=\sup_{\tau \in \mathbb T}\Big|\frac{(s_2p_1-s_1p_2)\tau^2+2(p_2-p_1)\tau+s_1-s_2}{(s_1-\overline{s_2}p_1)\tau^2-2(1-p_1\overline{p_2})\tau+\overline{s_2}-s_1\overline{p_2}} \Big|,
\end{align}
where $s_1$ and $p_1$ are the sum and product of two complex numbers in $\D$ and $s_2$ and $p_2$ are the sum and product of two potentially different complex numbers in $\D$.  Recall formula (\ref{cdexprp=q}) found in the previous section:
\begin{align*}    
&g_{\D^2}((z_1,z_2),(p,0),(0,p))=\log\Big|\frac{ pe^{2i\theta} - pz_1z_2 - e^{2i\theta}(z_1 + z_2) + 2e^{i\theta}z_1z_2}{pe^{2i\theta} - pz_1z_2 - 2e^{i\theta} + (z_1 + z_2)}\Big|\\&=\log\Big|\frac{  pz_1z_2e^{-2i\theta}  - 2e^{-i\theta}z_1z_2+(z_1 + z_2)-p}{(z_1 + z_2)e^{-2i\theta}- pz_1z_2e^{-2i\theta}  - 2e^{-i\theta} +p}\Big|
\end{align*}
Now let $(s_1,p_1)=(z_1+z_2,z_1z_2)$ and $(s_2,p_2)=(p,0)$ with $p>0$, then (\ref{caraMetric}) becomes
\begin{align*}
    &\sup_{\tau \in \mathbb T}\Big|\frac{(s_2p_1-s_1p_2)\tau^2+2(p_2-p_1)\tau+s_1-s_2}{(s_1-\overline{s_2}p_1)\tau^2-2(1-p_1\overline{p_2})\tau+\overline{s_2}-s_1\overline{p_2}} \Big|=\sup_{\tau\in\mathbb{T}}\Big|\frac{pz_1z_2\tau^2-2z_1z_2\tau+(z_1+z_2)-p}{(z_1+z_2)\tau^2-pz_1z_2\tau^2-2\tau+p}\Big|.
\end{align*}
Thus, as expected, the formulas match: \begin{align*}
    \text{exp}(g_{\D^2}((z_1,z_2),(p,0),(0,p))=\sup_{\tau\in\mathbb{T}}\Big|\frac{pz_1z_2\tau^2-2z_1z_2\tau+z_1+z_2-p}{(z_1+z_2)\tau^2-pz_1z_2\tau^2-2\tau+p}\Big|.
\end{align*} Thus the unimodular constant that achieves the supremum in (\ref{caraMetric}) is a root of the polynomial
\begin{align*}
    \overline{z}pe^{4i\theta} - \overline{z}e^{3i\theta} - pe^{3i\theta} + pe^{i\theta} - pz + e^{i\theta}z = 0,
\end{align*}
where $z\in\D$ is the parameter described in Theorem \ref{p=qThm}.\par
Let $\Omega\subset \C^n$ be a domain in $\C^n$. The \textit{infinitesimal Carath\'eodory metric} (or sometimes called the \textit{Carathéodory–Reiffen psuedometric})
is the nonnegative function $\gamma$ defined on the tangent bundle of $\Omega$ by
\[\gamma_{\Omega}(z;v)=\sup\frac{|f'(z)v|}{1-|f(z)|^2}\]
where the supremum is taking over all holomorphic functions $f:\Omega\rightarrow \D$ and $f'(z)v=\sum_{k=1}^nv_k\frac{\partial f}{\partial z_k}(z)$.
For $z_1\neq z_2$, we have
\begin{align*}
    \lim_{\underset{\frac{z_1-z_2}{||z_1-z_2||}\rightarrow v}{z_1,z_2\rightarrow z}}\frac{C_{\Omega}(z_1,z_2)}{||z_1-z_2||}=\gamma_{G}(z;v),\hspace{1cm} z\in \Omega, v\in \mathbb{C}^n, ||v||=1.
\end{align*}
See \cite{JP93} page 59 for a proof of the equality above and chapter 2 for a detailed discussion of the infinitesimal Carath\'eodory metric. Agler and Young found the following formula for the infinitesimal Carathe\'eodory metric on the symmetrized bidisk \cite{AY04}:
\[\gamma_G(z,v)=\sup_{\tau \in \mathbb T}\Big| \frac{v_1(1-\tau ^2p)-v_2(2-\tau s)}{(s-\overline{s}p)\tau^2-2(1-|p|)^2\tau +\overline{s}-\overline{p}s}\Big|,\]
where $z=(s,p)\in G$ and $v=v_1 \frac{\partial}{\partial s}+v_2\frac{\partial}{\partial p}$.
In 2014, Trybula found a formula for $\gamma_{G}$ via Lagrange multipliers for the special case that the two points are $(0,p)$ and $z=(r_1,r_2e^{i\theta})$ where $p\in (0,1)$ and $(r_1,r_2e^{i\theta})\in \R_{\geq 0}\times \C$ \cite{MT14}. In particular, Trybula's formula is

\begin{align*}
\gamma_{G}((0,p), (r_1,r_2e^{i\phi})=
    \begin{cases}
    \frac{\sqrt{(p+1)^2|r_1|^2+(4+(1-p)^2/p)||r_2|^2}}{2(1-p^2)} & \text{if } pr_1r_2\neq 0, \sin \phi=0, b\leq 2
    \\ 
    \\\frac{\sqrt{\big[ 1+p^2-2p(2\lambda+1)+\frac{2pb^2}{(1-\lambda)^2}\big]|r_1|^2+4|r_2|^2}}{2(1-p^2)} & \text{if } pr_1p_2\neq 0, \sin \phi\neq 0, \\&\text{or if }pr_1p_2\neq 0, \sin\phi=0,b\geq 2 
    \end{cases}
\end{align*}
where $p\in(0,1)$, $r_1,r_2\geq 0$, $r_1r_2\neq 0$, and $\lambda$ is the only root of the 4th degree polynomial
\[x^4-x^2(2+a^2+b^2)+2x(a^2-b^2)+(1-a^2-b^2)=0\]
in $(-\infty,-1)$ and 
\begin{align*}
    a=\frac{r_2\sin\phi(p+1)}{pr_1},\hspace{3mm} b=\frac{r_2\cos\phi(1-p)}{pr_1}.
\end{align*}
We see here that a root of a fourth degree polynomial was required for an explicit formula in terms of the data mirroring the need for a root of a sixth degree polynomial in the formula for the pluricomplex Green function of the bidisk.
\section{Region 3}
Recall that region 3 is when $|m_{z_1}(p)|=|z_2|$, $|z_1|= |m_{z_2}(q)|$, $p=0$, or $q=0$. This is a closed set with empty interior. 
\subsection{$p_1=q_1$ or $p_2=q_2$.}
It was previously known that 
\[g_{\D^2}((0,0),(p_1,p_2),(p_1,q_2))=\log\max\{|p_1|,|p_2q_2|\}.\]
Indeed, consider the holomorphic map $f:\D^2\rightarrow \D^2$ defined by
\[f(z_1,z_2)=\Big(\frac{z_1-p}{1-\overline{p}z_1},\frac{z_2-p_2}{1-\overline{p_2}z_2}\frac{z_1-q_2}{1-\overline{p_2}z_2}\Big).\]
Then $f$ is holomorphic and a proper 2-1 map. Then $f(p,p_2)=f(p,q_2)=0$, so by proposition \ref{coveringProp}, we have
\[g_{\D^2}((0,0),(p,p_2),(p,q_2))=g_{\D^2}((p,p_2q_2),(0,0))=\log\max\{ |p|,|p_1q_2|\}.\]
Thus
\[g_{\D^2}((z_1,z_2),(0,0),(0,q))=g_{\D^2}((0,0),(z_1,z_2),(z_1,m_{z_2}(q))=\log\max\{|z_1|,|z_2m_{q}(z_2))| \}.\]
\noindent One noteworthy property of this special case is that the extremal disk for the Lempert function may only pass through one pole. 
For example, suppose that $|z_2|<|z_1|$ or $|m_{z_2}(q)|<|z_2|$. Then there exists a function $f\in \text{Hol}(\D)$ that fixes the origin and $f(z_1)=z_2$ or $f(z_2)=|m_{z_2}(q)|$. For simplicity assume that $f(z_1)=z_2$. Next, define $\phi(\lambda)=(\lambda,f(\lambda))$. Then we have $\phi(z_1)=(z_1,z_2)$ and $\phi(0)=(0,0)$. Next, define $F(\zeta_1,\zeta_2)=m_{z_1}(\zeta_1)$. Then $F(z_1,z_2)=F(z_1,m_{q}(z_2))=0$ and $F(0,0)=z_1$. 
\par
However, there are cases when the poles will fall into region 1 in the sense that there will exist an interpolating holomorphic function between the coordinates. If $|z_2|>|z_1|$, $|m_{z_q}(q)|>|z_1|$, and $|z_2m_{z_2}(q)|>|z_1|$, then there will be a holomorphic function $f:\D\rightarrow \D$ such that $f(0)=0$, $f(z_2)=z_1$, and $f(m_{q}(z_2))=z_1$. Then the extremal disk is $\phi(\lambda)=(f(\lambda),\lambda)$.

\subsection{$|p_1|=|p_2|$}
This case is more complicated. Indeed, consider the case when $z$ is real. By the continuity of the pluricomplex Green function and (\ref{zrealformula}), we have that
\[g_{\D^2}((0,p),(p,0),(0,q))=\log\Big|\frac{p(p-q)}{2pq - p - q}\Big|.\]
While on the other hand
\[g_{\D^2}((0,-p),(p,0),(0,q))=\log|p|.\]
The form of the extremal disks differ in these two cases. At $(0,p)$, one may compute (using the expressions found in section \ref{generalComp}) that
\[|c|=\sqrt{\frac{(p + q - 2)p^2}{(2pq - p - q)}} \hspace{2mm} \text{ and }\hspace{2mm}t=\frac{q(p - 1)^2}{(p^2 - 3pq + p + q)}.\]
However, when we consider $(0,-p)$, we get that 
\[|c|=p,\hspace{2mm} t=1=\beta, \hspace{2mm}\alpha=-1.\]
In this case the disks are of the form
\[\phi(\lambda)=(-\lambda,\lambda).\]

\section{Acknowledgments} The author would like to acknowledge all the guidance and fruitful discussion he had with Professor Dan Coman throughout the duration of this project. This work was partly supported by the NSF grant DMS-2154273 (Coman).


\vskip2pc


\bibliography{bib}
\bibliographystyle{plain}

\end{document}